\ifdefined\pdfoutput \pdfoutput=1 \fi
\documentclass[11pt]{amsart}

\usepackage[margin=1in]{geometry}

\usepackage{amsmath,amssymb,amsfonts,amsthm,mathtools}
\usepackage{mathrsfs}

\usepackage{graphicx}
\usepackage{tikz}
\usepackage{float}
\usepackage{booktabs}

\usepackage{enumitem}
\usepackage[hidelinks]{hyperref}

\usepackage{microtype}
\allowdisplaybreaks[4]

\theoremstyle{plain}
\newtheorem{theorem}{Theorem}
\newtheorem{lemma}{Lemma}

\theoremstyle{definition}

\newtheorem{problem}{Problem}

\theoremstyle{remark}
\newtheorem{remark}{Remark}

\title[Direct sampling methods for inverse elastic medium scattering]{Direct sampling methods for inverse medium scattering problems of elastic waves}

\author{Lu Zhao}
\address{College of Science, Civil Aviation University of China, Tianjin, China}
\email{zhaol@cauc.edu.cn}

\author{Yiling Li}
\address{College of Science, Civil Aviation University of China, Tianjin, China}
\email{2023061032@cauc.edu.cn}

\author{Zhiyong Cheng}
\address{School of Mathematics, Jilin University, Changchun, Jilin 130012, China}
\email{chengzy22@mails.jlu.edu.cn}
\email{chengzy97@gmail.com}
\thanks{Corresponding author: Zhiyong Cheng}

\date{}

\subjclass[2020]{74J25, 78A46}
\keywords{inverse medium scattering, elastic wave, Helmholtz decomposition, direct sampling method}

\begin{document}

\begin{abstract}
This paper concerns the inverse elastic scattering problem of determining an unknown penetrable obstacle from far-field data. Using the Helmholtz decomposition, the coupled boundary value problem is reformulated as a coupled scalar Helmholtz system. We prove the uniqueness of the associated coupled Helmholtz boundary value problem and of the corresponding boundary integral equation system, which is then discretized by a Nystr\"{o}m method for efficient numerical computation. Leveraging the relation between the compressional and shear far-field patterns of the Navier system and those of the coupled Helmholtz system, we employ three indicators to reconstruct the location and shape of the obstacle. Furthermore, we analyze the decay properties of these indicators and establish corresponding stability estimates. Numerical experiments are presented to illustrate the effectiveness and robustness of the proposed method, even for limited-aperture data.
\end{abstract}

\maketitle

\section{Introduction}\label{sec1}

In this paper, we consider an inverse elastic scattering for penetrable obstacle from far-field data. In various fields of science and technology, such as noninvasive testing, geophysical exploration, and medical imaging, this type of inverse problem has attracted substantial interest \cite{eimag,simag,elastic}.

For the direct elastic wave scattering problem, two principal numerical methods have been extensively developed. The first is a boundary integral equation (BIE) method \cite{Bouine}, which restricts computations to the boundary of obstacle. When combined with appropriate discretization schemes, this method effectively reduces computational costs. In \cite{DongL,Drule}, the Helmholtz decomposition was used to transform the elastic wave equation to a coupled system of Helmholtz equations, and a Nystr\"{o}m discretization of the associated boundary integral equations was developed to solve the resulting system. For the multiple obstacle scattering problem, a GMRES iterative solver accelerated by the fast multipole method was proposed in \cite{Fmm} to efficiently solve the boundary integral equations. Moreover, Galerkin BIE methods \cite{2DGral,3DGral} have been successfully applied to elastic scattering in both two and three dimensions. For the three-dimensional elastic-wave scattering problem, a high-order combined-field spectral method was introduced in \cite{spect2}.
 Building on the Galerkin framework and spectral techniques, \cite{spect1} developed a fully discrete spectral scheme for the three-dimensional elastic obstacle scattering problem that reduces the evaluation of the integral operators to scalar operations, thereby greatly simplifying the numerical implementation. 
  Boundary integral formulations have also been used for elastic transmission problems. In two dimensions, coupled singular integral equations on the interface were derived and shown to be uniquely solvable \cite{martin1990}, while regularized BIE formulations with high-order Nystr\"{o}m discretizations were later developed for two-dimensional elastodynamic transmission problems \cite{DominguezTurc2022}.
 The second major class comprises artificial boundary methods \cite{Arb}, which introduce either a Dirichlet-to-Neumann (DtN) map or a perfectly matched layer (PML) truncation to reformulate the scattering problem as a boundary value problem on a bounded computational domain; the resulting problem is then solved by finite-element or finite-difference methods. In \cite{fin}, a priori and a posteriori error estimates were established for finite element discretizations coupled with the DtN map. For the elastic obstacle scattering problem, an adaptive finite element DtN method guided by a posteriori error estimators was developed in \cite{adfin}. Moreover, \cite{pml1} presented an adaptive finite-element PML method for elastic scattering by a rigid obstacle.

For the inverse elastic scattering problem, numerical methods are typically classified into two types: quantitative and qualitative methods. Quantitative methods typically rely on iterative algorithms. For instance, two Landweber iterative methods proposed in \cite{land} utilize multi-frequency scattered field data and its phaseless far-field pattern to reconstruct multiple elastic parameters of inhomogeneous medium. A gradient-descent algorithm that uses scattered field data to reconstruct penetrable elastic bodies was developed in \cite{descent}. To reconstruct rigid obstacles from either phased or phaseless far-field data generated by a single incident plane wave, \cite{DongL} proposed a reference-ball based nonlinear integral equation method. Building on this framework, \cite{cheng} employed a point source and measured the far-field pattern of the total field, thereby eliminating the need for the reference-ball device in phaseless reconstruction. In addition, based on the decomposition method, \cite{Newton} proposed a novel Newton-type approach for the inverse elastic scattering problem using scattered field data. Compared with quantitative methods, qualitative approaches typically employ indicators for reconstructing the obstacle boundary, without requiring a priori knowledge of the geometry and physical properties of obstacle. For example, the factorization method enables the reconstruction of rigid obstacle using only compressional or shear far-field patterns, thereby eliminating the requirement for full-aperture data \cite{factorm1}. The linear sampling method \cite{LSM} was proposed to reconstruct penetrable isotropic obstacles in an isotropic elastic medium. Additionally, \cite{RTM} introduced a single-frequency, weighted reverse-time migration method for imaging extended obstacles in an elastic medium. In addition to the aforementioned qualitative methods, we focus in particular on the direct sampling method developed in \cite{DSM,DSM2}. Using phased compressional and shear far-field data, \cite{DSM} introduced three indicators for the elastic scattering problem and devised data recovery procedures for limited aperture measurements cases. For phaseless case, the artificial rigid body technique makes it possible to uniquely determine a rigid obstacle from phaseless far-field data at a fixed frequency \cite{DSM2}. Further, a phase retrieval technique combined with the direct sampling methods was also proposed in \cite{DSM2} to reconstruct the rigid obstacle. For sampling-type imaging algorithms targeting unbounded rough surfaces, as well as deep-learning approaches integrated with classical numerical methods, we refer to \cite{nonsam,dp1}. Recently, theoretical developments on the inverse elastic medium scattering problem have been established in \cite{theoret1,theoret3,theory2}, while development of numerical algorithms remains relatively limited \cite{na3,na2}. As for the direct sampling method for inverse acoustic medium scattering, we refer to \cite{zou1,LXD}.

In this work, we consider the direct and inverse elastic scattering problem for bounded, penetrable obstacles embedded in a homogeneous medium. 
In contrast to existing boundary integral formulations for elastic transmission problems \cite{martin1990,DominguezTurc2022}, which are constructed directly for the vector Navier system and involve hypersingular operators with vector-valued kernels, our approach first applies the Helmholtz decomposition \cite{Drule,Djump} to reformulate the transmission problem as a coupled scalar Helmholtz system. This reduction replaces the vector traction operator by scalar boundary operators and allows us to make use of well-established tools for scalar Helmholtz problems, in particular the jump relations for the second derivatives of the single-layer potential. On this basis, we establish the uniqueness of the coupled boundary value problem and of the resulting boundary integral equation system, and solve the latter efficiently by a Nystr\"{o}m-type discretization.
For the inverse problem, building on the indicator framework in \cite{DSM}, we extend the direct sampling method from rigid to penetrable obstacles. By exploiting the relation between the compressional and shear far-field patterns of the Navier system and those of the coupled Helmholtz system, we adapt three indicator functions that reconstruct the location and shape of the obstacle without separately treating different polarization directions, and we further analyze their decay properties and establish corresponding stability estimates. Since the indicators are evaluated through direct multiplications of the far-field data with suitable test functions, the reconstruction procedure involves only scalar operations and is therefore straightforward and efficient to implement.

The goal of this work is fourfold:
\begin{enumerate}
	\item[(1)] develop a fully discrete formulation for the forward elastic scattering problem by combining the Helmholtz decomposition with a Nystr\"{o}m-type discretization;
	\item[(2)] establish the uniqueness of the associated coupled Helmholtz boundary value problem and of the resulting boundary integral equation system;
	\item[(3)] extend the indicator-based direct sampling framework from rigid obstacles to penetrable obstacles by exploiting the compressional and shear far-field patterns;
	\item[(4)]  analyze the decay properties of these indicators and derive the corresponding stability estimates.
\end{enumerate}

The paper is organized as follows. In Section 2, we introduce the problem formulation and reformulate the original model, by means of the Helmholtz decomposition, as a coupled boundary value problem for the Helmholtz equations. The uniqueness of this boundary value problem is also proved. In Section 3, we derive the corresponding boundary integral equations, prove their uniqueness, and solve them numerically using a Nystr\"{o}m-type discretization scheme.
Section 4 gives three indicators for reconstructing the location and shape of a penetrable obstacle from far-field data, analyzes their decay properties as the sampling points move away from the obstacle boundary, and establishes stability results. Numerical experiments in Section 5 validate the effectiveness of the proposed indicators. The paper concludes in Section 6.

\section{Problem Formulation}

Let \(D\subset \mathbb{R}^2\) be a bounded, simply-connected penetrable elastic obstacle with smooth boundary \(\partial D\) and mass density \(\rho_1\). The exterior region \(\mathbb{R}^2\setminus\overline{D}\)
is occupied by a homogeneous and isotropic medium of mass density \(\rho_2\). The direct scattering problem for a time-harmonic elastic wave is to find the displacement fields
\(
\pmb{U}_1 \in (C^2(D))^2 \cap (C^1(\overline{D}))^2,~ 
\pmb{U}_2 \in (C^2(\mathbb{R}^2\setminus\overline{D}))^2 \cap (C^1(\mathbb{R}^2\setminus D))^2, 
\)
satisfying the two-dimensional Navier equations 
\begin{equation}\label{2.1}
	\begin{cases}
		\mu_{1} \Delta\pmb{U}_1 + (\lambda_{1}+\mu_{1}) \nabla(\nabla\cdot\pmb{U}_1)
		+ \rho_{1} \omega^{2} \pmb{U}_1 = 0
		&\text{in }D,\\[6pt]
		\mu_{2} \Delta\pmb{U}_2 + (\lambda_{2}+\mu_{2}) \nabla(\nabla\cdot\pmb{U}_2)
		+ \rho_{2} \omega^{2} \pmb{U}_2 = 0
		&\text{in }\mathbb{R}^2\setminus\overline{D},
	\end{cases}
\end{equation}
where \(\omega>0\) denotes the angular frequency, and \(\lambda_1\), \(\mu_1\) and \(\lambda_2\), \(\mu_2\) represent the Lam\'{e} constants for the obstacle \(D\) and exterior domain \(\mathbb{R}^2\setminus\overline{D}\), respectively, satisfying \(\mu_i>0,\lambda_i+\mu_i>0\), \(i=1,2\). 

The continuity of displacement and traction across the boundary \(\partial D\) implies
\begin{align}\label{bc}
	\pmb{U}_1 = \pmb{U}_2,\qquad
	T_{\pmb \nu,1}(\pmb{U}_1) = T_{\pmb \nu,2}(\pmb{U}_2),
\end{align}
where the traction operators are defined by
\[
T_{\pmb \nu,i}(\pmb{U}) = 2\mu_i\partial_{\pmb{\nu}}\pmb{U}
+ \lambda_i\pmb{\nu}(\nabla\cdot\pmb{U})
- \mu_i\pmb{\tau}\operatorname{curl}\pmb{U},\quad i=1,2. 
\]
Here, \(\pmb{\nu}=(\nu_1,\nu_2)^\top\) and \(\pmb{\tau}=\pmb{\nu}^{\perp}=(-\nu_2,\nu_1)^\top\) are the unit exterior normal and tangent vectors on \(\partial D\).  The scalar curl of a vector field \(\pmb{v}=(v_1,v_2)^\top\)  is \(
\operatorname{curl}\pmb{v} = \partial_{x_1}v_2 - \partial_{x_2}v_1
\), whereas the vector curl of a scalar field 
$v$ is given by \(
\pmb{\operatorname{curl}}v = \bigl(\partial_{x_2}v,-\partial_{x_1}v\bigr)^\top
\).

Let the incident field be the superposition of compressional and shear plane waves
\[
\pmb{u} = a_{p}\pmb{u}_{p}  +  a_{s}\pmb{u}_{s}.
\]
Here, \((a_{p},a_{s})\in\mathbb{R}^2\) are the incident field coefficients, and the plane waves are defined by
\[
\pmb{u}_{p}(\pmb{x})
= \pmb{d}{\rm e}^{\mathrm{i}\kappa_{p}\pmb{x}\cdot\pmb{d}}, 
\quad
\pmb{u}_{s}(\pmb{x})
= \pmb{d}^{\perp}{\rm e}^{\mathrm{i}\kappa_{s}\pmb{x}\cdot\pmb{d}},
\]
where \(\pmb{d}=(\cos\theta,\sin\theta)^{\top}\) is the unit propagation direction for an angle \(\theta\in[0,2\pi)\), and \(\pmb{d}^{\perp}=(-\sin\theta,\cos\theta)^{\top}\) is the corresponding orthogonal unit vector. The respective wavenumbers $\kappa_{p}$ and $\kappa_{s}$ are given by
\[
\kappa_{p} = \omega\sqrt{\frac{\rho_2}{\lambda_{2}+2\mu_{2}}},\qquad
\kappa_{s} = \omega\sqrt{\frac{\rho_2}{\mu_{2}}}.
\]

Moreover, the scattered field \(\pmb{v}=\pmb{U}_{2}-\pmb{u}\) must satisfy the Kupradze-Sommerfeld radiation condition
\begin{align*}
	\lim_{r\to\infty}\sqrt{r}\bigl(\partial_{r}\pmb{v}_{p}
	- \mathrm{i}\kappa_{p}\pmb{v}_{p}\bigr) = 0,
	\quad
	\lim_{r\to\infty}\sqrt{r}\bigl(\partial_{r}\pmb{v}_{s}
	- \mathrm{i}\kappa_{s}\pmb{v}_{s}\bigr) = 0,
	\quad
	r = \lvert\pmb{x}\rvert,
\end{align*}
where $\pmb{v}_{p}$ and $\pmb{v}_{s}$ denote the compressional and shear components of \(\pmb{v}\), defined respectively as
\[
\pmb{v}_{p}
= -\frac{1}{\kappa_{p}^{2}}
\nabla(\nabla  \cdot  \pmb{v}),
\qquad
\pmb{v}_{s}
= \frac{1}{\kappa_{s}^{2}}
\pmb{\operatorname{curl}}\bigl(\operatorname{curl}\pmb{v}\bigr).
\]

The uniqueness of the elastic transmission problem \eqref{2.1}--\eqref{bc}, subject to the Kupradze--Sommerfeld radiation condition, has been established in \cite{martin1990,descent}.

Following \cite{DongL}, we apply the Helmholtz decomposition to \(\pmb{U}_1\) and \(\pmb{v}\)
\begin{equation}\label{HelmDec}
	\pmb{U}_1 = \nabla\phi_1 + \pmb{\operatorname{curl}}\psi_1,\quad
	\pmb{v}    = \nabla\phi_2 + \pmb{\operatorname{curl}}\psi_2,
\end{equation}
where \(\phi_1,\psi_1\) and \(\phi_2,\psi_2\) are scalar potential functions. Substituting \eqref{HelmDec} into \eqref{2.1} yields the following Helmholtz equations
\[
\begin{cases}
	\Delta\phi_{1} + \kappa_{a}^{2}\phi_{1} = 0,\quad
	\Delta\psi_{1} + \kappa_{b}^{2}\psi_{1} = 0 \quad {\rm in}~D,\\[4pt]
	\Delta\phi_{2} + \kappa_{p}^{2}\phi_{2} = 0,\quad
	\Delta\psi_{2} + \kappa_{s}^{2}\psi_{2} = 0\quad {\rm in}~\mathbb{R}^2\setminus\overline{D}, 
\end{cases}
\]
where the compressional and shear wavenumbers $\kappa_a,\kappa_b$ are given by
\[
\kappa_{\mathrm{a}} = \omega\sqrt{\frac{\rho_1}{\lambda_{1}+2\mu_{1}}},\qquad
\kappa_{b} = \omega\sqrt{\frac{\rho_1}{\mu_{1}}}.
\]
Additionally, \(\phi_2\) and \(\psi_2\) are required to satisfy the Sommerfeld radiation conditions
\[
\lim_{r\to\infty} r^{\tfrac12}(\partial_{r}\phi_2 - \mathrm{i}\kappa_{p}\phi_2)=0,\quad
\lim_{r\to\infty} r^{\tfrac12}(\partial_{r}\psi_2 - \mathrm{i}\kappa_{s}\psi_2)=0,
\quad r = \left|\pmb{x}\right|.
\]
Substituting the decomposition \eqref{HelmDec} into \eqref{bc} and taking inner products with \(\pmb{\nu}\) and \(\pmb{\tau}\), respectively, we obtain the following boundary conditions on \(\partial D\)
\[
\begin{cases}
	\partial_{\pmb{\nu}}\phi_{1} + \partial_{\pmb{\tau}}\psi_{1}
	- \partial_{\pmb{\nu}}\phi_{2} - \partial_{\pmb{\tau}}\psi_{2}
	= f_{1},\\[3pt]
	\partial_{\pmb{\tau}}\phi_{1} - \partial_{\pmb{\nu}}\psi_{1}
	- \partial_{\pmb{\tau}}\phi_{2} + \partial_{\pmb{\nu}}\psi_{2}
	= f_{2},\\[3pt]
	2\bigl(\mu_{1}\pmb{\nu}  \cdot  \partial_{\pmb{\nu}}\nabla\phi_{1}
	+\mu_{1}\pmb{\nu}  \cdot  \partial_{\pmb{\nu}}\pmb{\operatorname{curl}}\psi_{1}\bigr)
	- \lambda_{1}\kappa_{a}^{2}\phi_{1}\\
	\qquad-2\bigl(\mu_{2}\pmb{\nu}  \cdot  \partial_{\pmb{\nu}}\nabla\phi_{2}
	+\mu_{2}\pmb{\nu}  \cdot  \partial_{\pmb{\nu}}\pmb{\operatorname{curl}}\psi_{2}\bigr)
	+ \lambda_{2}\kappa_{p}^{2}\phi_{2}
	= f_{3},\\[3pt]
	2\bigl(\mu_{1}\pmb{\tau}  \cdot  \partial_{\pmb{\nu}}\nabla\phi_{1}
	+\mu_{1}\pmb{\tau}  \cdot  \partial_{\pmb{\nu}}\pmb{\operatorname{curl}}\psi_{1}\bigr)
	- \mu_{1}\kappa_{b}^{2}\psi_{1}\\
	\qquad-2\bigl(\mu_{2}\pmb{\tau}  \cdot  \partial_{\pmb{\nu}}\nabla\phi_{2}
	+\mu_{2}\pmb{\tau}  \cdot  \partial_{\pmb{\nu}}\pmb{\operatorname{curl}}\psi_{2}\bigr)
	+ \mu_{2}\kappa_{s}^{2}\psi_{2}
	= f_{4},
\end{cases}
\]
with the terms $f_1$ to $f_4$ given by
\[
f_{1} = \pmb{\nu}  \cdot  \pmb{u},\quad
f_{2} = \pmb{\tau}  \cdot  \pmb{u},\quad
f_{3} = \pmb{\nu}  \cdot  T_{\pmb \nu,2}(\pmb{u}),\quad
f_{4} = \pmb{\tau}  \cdot  T_{\pmb \nu,2}(\pmb{u}).
\]

In summary, the scalar potential functions \(\phi_1\), \(\psi_1\) and \(\phi_2\),  \(\psi_2\) are governed by the following coupled boundary value problem
\begin{equation}\label{2.5}
	\left\{
	\begin{aligned}
		&\Delta\phi_1+\kappa_{a}^2\phi_1=0,\quad \Delta\psi_1+\kappa_{b}^2\psi_1=0
		&&\text{in } D,\\
		&\Delta\phi_2+\kappa_{p}^2\phi_2=0,\quad \Delta\psi_2+\kappa_{s}^2\psi_2=0
		&&\text{in } \mathbb{R}^2\setminus\overline{D},\\
		&\partial_{\boldsymbol{\nu}}\phi_1+\partial_{\boldsymbol{\tau}}\psi_1-\partial_{\boldsymbol{\nu}}\phi_2-\partial_{\boldsymbol{\tau}}\psi_2=f_1
		&&\text{on } \partial D,\\
		&\partial_{\boldsymbol{\tau}}\phi_1-\partial_{\boldsymbol{\nu}}\psi_1-\partial_{\boldsymbol{\tau}}\phi_2+\partial_{\boldsymbol{\nu}}\psi_2=f_2
		&&\text{on } \partial D,\\
		&\begin{aligned}[b]
			&2(\mu_1\boldsymbol{\nu}\cdot\partial_{\boldsymbol{\nu}}\nabla\phi_1+\mu_1\boldsymbol{\nu}\cdot\partial_{\boldsymbol{\nu}}\boldsymbol{\operatorname{curl}}\psi_1)-\lambda_1\kappa_{a}^2\phi_1\\
			&\quad-2(\mu_2\boldsymbol{\nu}\cdot\partial_{\boldsymbol{\nu}}\nabla\phi_2+\mu_2\boldsymbol{\nu}\cdot\partial_{\boldsymbol{\nu}}\boldsymbol{\operatorname{curl}}\psi_2)+\lambda_2\kappa_{p}^2\phi_2=f_3
		\end{aligned}
		&&\text{on } \partial D,\\
		&\begin{aligned}[b]
			&2(\mu_1\boldsymbol{\tau}\cdot\partial_{\boldsymbol{\nu}}\nabla\phi_1+\mu_1\boldsymbol{\tau}\cdot\partial_{\boldsymbol{\nu}}\boldsymbol{\operatorname{curl}}\psi_1)-\mu_1\kappa_{b}^2\psi_1\\
			&\quad-2(\mu_2\boldsymbol{\tau}\cdot\partial_{\boldsymbol{\nu}}\nabla\phi_2+\mu_2\boldsymbol{\tau}\cdot\partial_{\boldsymbol{\nu}}\boldsymbol{\operatorname{curl}}\psi_2)+\mu_2\kappa_{s}^2\psi_2=f_4
		\end{aligned}
		&&\text{on } \partial D,\\
		&\lim_{\rho\to\infty}\rho^{\frac{1}{2}}(\partial_{\rho}\phi_2-\mathrm{i}\kappa_{p}\phi_2)=0,\quad
		\lim_{\rho\to\infty}\rho^{\frac{1}{2}}(\partial_{\rho}\psi_2-\mathrm{i}\kappa_{s}\psi_2)=0
		&&\rho=|\boldsymbol{x}|.
	\end{aligned}
	\right.
\end{equation}

Compared with the rigid-obstacle case, the coupled problem \eqref{2.5} is mathematically more complex. The transmission conditions require the simultaneous determination of the interior potentials $\phi_1$, $\psi_1$ and the exterior radiating potentials $\phi_2$, $\psi_2$. Moreover, the first two boundary conditions couple the normal and tangential derivatives of the compressional and shear potentials, while the last two, arising from the continuity of traction, involve second-order boundary terms. These features make both the following uniqueness analysis and the derivation of the boundary integral equations in Section~3 more delicate than in the rigid-obstacle setting.

\begin{theorem}\label{uniqueness of coupled system}
The coupled boundary value problem \eqref{2.5} has at most one solution.
\end{theorem}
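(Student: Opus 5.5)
By linearity it suffices to show that a solution $(\phi_1,\psi_1,\phi_2,\psi_2)$ of \eqref{2.5} with $f_1=f_2=f_3=f_4=0$ vanishes identically. The plan is to go back from the potentials to displacement fields and use the known uniqueness of the elastic transmission problem \cite{martin1990,descent}. After that, the potentials themselves are recovered from the displacements via the Helmholtz equations.

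\textbf{Step 1: rebuild the displacements.} Define
\[
\pmb{U}_1=\nabla\phi_1+\pmb{\operatorname{curl}}\psi_1 \ \text{in } D,\qquad \pmb{v}=\nabla\phi_2+\pmb{\operatorname{curl}}\psi_2 \ \text{in } \mathbb{R}^2\setminus\overline{D}.
\]
Since $\phi_1,\psi_1$ solve Helmholtz equations with wavenumbers $\kappa_a,\kappa_b$, we have $\nabla\cdot\pmb{U}_1=\Delta\phi_1=-\kappa_a^2\phi_1$ and $\operatorname{curl}\pmb{U}_1=-\Delta\psi_1=\kappa_b^2\psi_1$. A direct substitution then shows that $\pmb{U}_1$ satisfies the first Navier equation in \eqref{2.1}. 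The same computation shows that $\pmb{v}$ satisfies the second Navier equation.

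\textbf{Step 2: radiation condition.} We have $\pmb{v}_p=-\kappa_p^{-2}\nabla(\nabla\cdot\pmb{v})=\nabla\phi_2$ and $\pmb{v}_s=\kappa_s^{-2}\pmb{\operatorname{curl}}\operatorname{curl}\pmb{v}=\pmb{\operatorname{curl}}\psi_2$. Derivatives of radiating Helmholtz solutions are again radiating, so $\pmb{v}$ satisfies the Kupradze--Sommerfeld condition.

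\textbf{Step 3: transmission conditions.} The first two lines of \eqref{2.5} say that the normal and tangential components of $\pmb{U}_1-\pmb{v}$ vanish, so $\pmb{U}_1=\pmb{v}$ on $\partial D$. For the last two lines, compute $T_{\pmb\nu,1}(\pmb{U}_1)$ using $\nabla\cdot\pmb{U}_1=-\kappa_a^2\phi_1$ and $\operatorname{curl}\pmb{U}_1=\kappa_b^2\psi_1$. Its $\pmb\nu$- and $\pmb\tau$-components are exactly the interior parts of lines three and four of \eqref{2.5}, and the exterior terms correspond to $T_{\pmb\nu,2}(\pmb{v})$. Hence those two lines state $T_{\pmb\nu,1}(\pmb{U}_1)=T_{\pmb\nu,2}(\pmb{v})$.

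This identification is the main bookkeeping point. The paper derived these lines from \eqref{bc} with the incident field absorbed into $f_3,f_4$, so the derivation only needs to be reversed with $\pmb{u}=0$. One must still check the signs: $\pmb\tau\cdot\pmb\tau\operatorname{curl}\pmb U_1=\kappa_b^2\psi_1$ gives the term $-\mu_1\kappa_b^2\psi_1$, and $\pmb\nu\cdot\pmb\nu\,\lambda_1\nabla\cdot\pmb U_1$ gives $-\lambda_1\kappa_a^2\phi_1$.

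\textbf{Step 4: conclude.} The pair $(\pmb{U}_1,\pmb{U}_2=\pmb{v})$ therefore solves the homogeneous elastic transmission problem with zero incident field. By the uniqueness result cited after \eqref{2.1}, $\pmb{U}_1=0$ in $D$ and $\pmb{v}=0$ outside $\overline{D}$. Consequently
\[
\phi_1=-\kappa_a^{-2}\nabla\cdot\pmb{U}_1=0,\quad \psi_1=\kappa_b^{-2}\operatorname{curl}\pmb{U}_1=0,
\]
and in the same way $\phi_2=-\kappa_p^{-2}\nabla\cdot\pmb{v}=0$ and $\psi_2=\kappa_s^{-2}\operatorname{curl}\pmb{v}=0$. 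This uses only that the wavenumbers are positive.

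\textbf{Regularity.} The step I expect to need the most care is regularity. The cited uniqueness requires $\pmb{U}_1\in C^1(\overline D)$, so the potentials need $C^2$ regularity up to the boundary for the traction to make sense. I would take this as part of the solution class implicit in \eqref{2.5}, since second derivatives appear in the boundary conditions.

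If a self-contained argument is preferred instead of citing \cite{martin1990,descent}, I would argue as follows. Apply Betti's formula in $D$ and in $B_R\setminus\overline D$ and add the results; the transmission conditions cancel the $\partial D$ terms. Taking imaginary parts gives $\operatorname{Im}\int_{|x|=R}\overline{\pmb v}\cdot T\pmb v=0$. The radiation condition then yields vanishing far-field patterns of $\phi_2,\psi_2$. Rellich's lemma gives $\phi_2=\psi_2=0$ outside $D$. Finally, $\pmb U_1$ has zero Cauchy data on $\partial D$, so Holmgren's theorem (or unique continuation for the Navier equation) gives $\pmb U_1=0$.
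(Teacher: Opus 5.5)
Your argument is correct, but it takes a different route from the paper. You lift the potentials back to displacements and check that \eqref{2.5} with $f_1=\dots=f_4=0$ is exactly the homogeneous Navier transmission problem with the Kupradze--Sommerfeld condition. Your sign checks for the $\lambda_1\kappa_a^2\phi_1$ and $\mu_1\kappa_b^2\psi_1$ terms are right, as are $\pmb v_p=\nabla\phi_2$ and $\pmb v_s=\pmb{\operatorname{curl}}\psi_2$. You then invoke the cited vector uniqueness and recover the potentials via $\phi=-\kappa^{-2}\nabla\cdot\pmb U$ and $\psi=\kappa^{-2}\operatorname{curl}\pmb U$.

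The paper instead reproves uniqueness directly in the potential variables. It writes Betti's formula with boundary term $m_j\overline{q_j}+n_j\overline{r_j}$, which is precisely $T_{\pmb\nu}\pmb U\cdot\overline{\pmb U}$, so it uses the same identification you make explicit, only implicitly. It then uses the asymptotics $q_R\approx\mathrm{i}\kappa_p\phi_2$, $r_R\approx-\mathrm{i}\kappa_s\psi_2$, $m_R\approx-\rho_2\omega^2\phi_2$, $n_R\approx\rho_2\omega^2\psi_2$ to get $\int_{\partial B_R}(\kappa_p|\phi_2|^2+\kappa_s|\psi_2|^2)\,ds\to0$, and applies Rellich's lemma. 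The interior is handled by a scalar argument. From $\nabla\phi_1+\pmb{\operatorname{curl}}\psi_1=0$ on $\partial D$ the paper gets $\pmb\nu\cdot\partial_{\pmb\nu}(\nabla\phi_1+\pmb{\operatorname{curl}}\psi_1)=-\kappa_a^2\phi_1$ and $\pmb\tau\cdot\partial_{\pmb\nu}(\nabla\phi_1+\pmb{\operatorname{curl}}\psi_1)=\kappa_b^2\psi_1$. The traction conditions then force $\phi_1=\psi_1=0$, and hence $\partial_{\pmb\nu}\phi_1=\partial_{\pmb\nu}\psi_1=0$, on $\partial D$. Holmgren's theorem for the scalar Helmholtz equation finishes.

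Your reduction is shorter and shows that the theorem is really a corollary of the known elastic result. The cost is dependence on that external theorem and on the bookkeeping of the equivalence. The paper's proof is self-contained, uses only scalar Helmholtz tools, and avoids unique continuation for the Navier system. Your self-contained fallback is essentially the paper's argument; it differs only in closing the interior step by unique continuation for $\pmb U_1$ rather than by vanishing scalar Cauchy data. Both routes rely on the same implicit $C^2$-up-to-the-boundary regularity of the potentials that you flagged.
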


\begin{proof}
It suffices to prove that $\phi_1=\psi_1=0$ in $D$ and $\phi_2=\psi_2=0$ in $\mathbb{R}^2\setminus\overline{D}$ when $f_1=f_2=f_3=f_4=0$. 
Define the scalar boundary traces
\[
q_1:=\partial_{\pmb\nu}\phi_1+\partial_{\pmb\tau}\psi_1,\qquad
r_1:=\partial_{\pmb\tau}\phi_1-\partial_{\pmb\nu}\psi_1,
\]
\[
q_2:=\partial_{\pmb\nu}\phi_2+\partial_{\pmb\tau}\psi_2,\qquad
r_2:=\partial_{\pmb\tau}\phi_2-\partial_{\pmb\nu}\psi_2,
\]
and
\[
m_1:=2\bigl(\mu_1\pmb\nu\cdot\partial_{\pmb\nu}\nabla\phi_1
+\mu_1\pmb\nu\cdot\partial_{\pmb\nu}\boldsymbol{\operatorname{curl}}\psi_1\bigr)
-\lambda_1\kappa_a^2\phi_1,
\]
\[
n_1:=2\bigl(\mu_1\pmb\tau\cdot\partial_{\pmb\nu}\nabla\phi_1
+\mu_1\pmb\tau\cdot\partial_{\pmb\nu}\boldsymbol{\operatorname{curl}}\psi_1\bigr)
-\mu_1\kappa_b^2\psi_1,
\]
\[
m_2:=2\bigl(\mu_2\pmb\nu\cdot\partial_{\pmb\nu}\nabla\phi_2
+\mu_2\pmb\nu\cdot\partial_{\pmb\nu}\boldsymbol{\operatorname{curl}}\psi_2\bigr)
-\lambda_2\kappa_p^2\phi_2,
\]
\[
n_2:=2\bigl(\mu_2\pmb\tau\cdot\partial_{\pmb\nu}\nabla\phi_2
+\mu_2\pmb\tau\cdot\partial_{\pmb\nu}\boldsymbol{\operatorname{curl}}\psi_2\bigr)
-\mu_2\kappa_s^2\psi_2.
\]
Then the homogeneous boundary conditions in \eqref{2.5} can be written as
\begin{equation}\label{boundary condition bc}
q_1=q_2,\qquad r_1=r_2,\qquad m_1=m_2,\qquad n_1=n_2
\qquad\text{on }~\partial D.
\end{equation}

Applying the first Betti formula to
$\nabla\phi_1+\boldsymbol{\operatorname{curl}}\psi_1$ yields
\begin{align}
&\int_D
\Bigl(
\lambda_1\bigl|\nabla\cdot(\nabla\phi_1+\boldsymbol{\operatorname{curl}}\psi_1)\bigr|^2
+2\mu_1\bigl|\varepsilon(\nabla\phi_1+\boldsymbol{\operatorname{curl}}\psi_1)\bigr|^2
-\rho_1\omega^2\bigl|\nabla\phi_1+\boldsymbol{\operatorname{curl}}\psi_1\bigr|^2
\Bigr)dx \notag\\
=&
\int_{\partial D}
\Bigl(
m_1\overline{q_1}+n_1\overline{r_1}
\Bigr)ds,
\label{betti_inside}
\end{align}
where $\varepsilon(\pmb w):=\frac12(\nabla\pmb w+(\nabla\pmb w)^\top)$.
Since the volume integral in \eqref{betti_inside} is real-valued, we have
\begin{equation}\label{imag part}
\Im\int_{\partial D}\bigl(m_1\overline{q_1}+n_1\overline{r_1}\bigr)ds=0.
\end{equation}

Let $B_R$ be a disk such that $\overline D\subset B_R$, and set
\(
\Omega_R:=B_R\setminus\overline D.
\)
Applying the same Betti formula in $\Omega_R$ to
$\nabla\phi_2+\boldsymbol{\operatorname{curl}}\psi_2$ yields
\begin{align}
&\int_{\Omega_R}
\Bigl(
\lambda_2\bigl|\nabla\cdot(\nabla\phi_2+\boldsymbol{\operatorname{curl}}\psi_2)\bigr|^2
+2\mu_2\bigl|\varepsilon(\nabla\phi_2+\boldsymbol{\operatorname{curl}}\psi_2)\bigr|^2
-\rho_2\omega^2\bigl|\nabla\phi_2+\boldsymbol{\operatorname{curl}}\psi_2\bigr|^2
\Bigr)dx \notag\\
=&
\int_{\partial B_R}\bigl(m_R\overline{q_R}+n_R\overline{r_R}\bigr)ds
-\int_{\partial D}\bigl(m_2\overline{q_2}+n_2\overline{r_2}\bigr)ds,
\label{betti_outside}
\end{align}
where on $\partial B_R$,
\[
q_R:=\partial_r\phi_2+\partial_{\pmb\tau}\psi_2,\qquad
r_R:=\partial_{\pmb\tau}\phi_2-\partial_r\psi_2,
\]
\[
m_R:=2\bigl(\mu_2\hat {\pmb x}\cdot\partial_r\nabla\phi_2
+\mu_2 \hat {\pmb x}\cdot\partial_r\boldsymbol{\operatorname{curl}}\psi_2\bigr)
-\lambda_2\kappa_p^2\phi_2,
\]
\[
n_R:=2\bigl(\mu_2\hat {\pmb x}^\perp\cdot\partial_r\nabla\phi_2
+\mu_2\hat {\pmb x}^\perp\cdot\partial_r\boldsymbol{\operatorname{curl}}\psi_2\bigr)
-\mu_2\kappa_s^2\psi_2,
\]
with $\hat {\pmb x}=\frac{\pmb x}{|\pmb x|}$ on $\partial B_R$.
Taking imaginary parts in \eqref{betti_outside} and using
\eqref{boundary condition bc} and \eqref{imag part}, we get
\begin{equation}\label{imag part BR}
\Im\int_{\partial B_R}\bigl(m_R\overline{q_R}+n_R\overline{r_R}\bigr)ds=0.
\end{equation}

Now we use the Sommerfeld radiation conditions for $\phi_2$ and $\psi_2$.
By the standard asymptotic expansions of outgoing solutions to the
two-dimensional Helmholtz equation,
\[
\phi_2(\pmb x)=\frac{e^{i\kappa_p r}}{r^{1/2}}
\bigl(\phi_\infty(\hat {\pmb x})+\mathcal O(r^{-1})\bigr),
\qquad
\psi_2(\pmb x)=\frac{e^{i\kappa_s r}}{r^{1/2}}
\bigl(\psi_\infty(\hat {\pmb x})+\mathcal O(r^{-1})\bigr),
\qquad r=\left|\pmb x \right|\to\infty,
\]
uniformly in $\hat {\pmb x}\in\mathbb S$. Hence, on $\partial B_R$,
\[
\partial_r\phi_2=i\kappa_p\phi_2+\mathcal O(R^{-3/2}),\qquad
\partial_r\psi_2=i\kappa_s\psi_2+\mathcal O(R^{-3/2}),
\]
\[
\partial_{\pmb\tau}\phi_2=\mathcal O(R^{-3/2}),\qquad
\partial_{\pmb\tau}\psi_2=\mathcal O(R^{-3/2}),
\]
\[
\partial_{rr}\phi_2=-\kappa_p^2\phi_2+\mathcal O(R^{-3/2}),\qquad
\partial_{rr}\psi_2=-\kappa_s^2\psi_2+\mathcal O(R^{-3/2}),
\]
\[
\partial_{r\pmb \tau}\phi_2=\mathcal O(R^{-3/2}),\qquad
\partial_{r\pmb \tau}\psi_2=\mathcal O(R^{-3/2}).
\]
Moreover, on $\partial B_R$,
\[
\hat {\pmb x}\cdot\partial_r\nabla\phi_2=\partial_{rr}\phi_2,\qquad
\hat {\pmb x}\cdot\partial_r\boldsymbol{\operatorname{curl}}\psi_2=\partial_{r\pmb\tau}\psi_2,
\]
\[
\hat {\pmb x}^\perp\cdot\partial_r\nabla\phi_2=\partial_{r\pmb\tau}\phi_2,\qquad
\hat {\pmb x}^\perp\cdot\partial_r\boldsymbol{\operatorname{curl}}\psi_2=-\partial_{rr}\psi_2.
\]
Therefore,
\[
q_R=i\kappa_p\phi_2+\mathcal O(R^{-3/2}),\qquad
r_R=-i\kappa_s\psi_2+\mathcal O(R^{-3/2}),
\]
\[
m_R=-(\lambda_2+2\mu_2)\kappa_p^2\phi_2+\mathcal O(R^{-3/2})
=-\rho_2\omega^2\phi_2+\mathcal O(R^{-3/2}),
\]
\[
n_R=\mu_2\kappa_s^2\psi_2+\mathcal O(R^{-3/2})
=\rho_2\omega^2\psi_2+\mathcal O(R^{-3/2}).
\]
Substituting these estimates into \eqref{imag part BR}, we obtain
\begin{equation}\label{farfield_identity}
\Im\int_{\partial B_R}\bigl(m_R\overline{q_R}+n_R\overline{r_R}\bigr)ds
=
\rho_2\omega^2
\int_{\partial B_R}\bigl(\kappa_p\left| \phi_2\right|^2+\kappa_s\left|\psi_2\right|^2\bigr)ds
+o(1)
\quad\text{as }R\to\infty.
\end{equation}
Combining \eqref{imag part BR} and \eqref{farfield_identity}, we get
\[
\lim_{R\to\infty}
\int_{\partial B_R}\bigl(\kappa_p\left|\phi_2\right|^2+\kappa_s\left|\psi_2\right|^2\bigr)ds=0.
\]
Since $\kappa_p>0$ and $\kappa_s>0$, it follows that
\[
\lim_{R\to\infty}\int_{\partial B_R}\left|\phi_2\right|^2ds=0,
\qquad
\lim_{R\to\infty}\int_{\partial B_R}\left|\psi_2\right|^2ds=0.
\]
By Rellich's lemma for the scalar Helmholtz equation together with the
Sommerfeld radiation conditions, we conclude that
\begin{equation*}
\phi_2=\psi_2=0\qquad\text{in }\mathbb R^2\setminus\overline D.
\end{equation*}

Hence the boundary conditions on $\partial D$ reduce to
\begin{equation}\label{reduced_bc_1}
\partial_{\pmb\nu}\phi_1+\partial_{\pmb\tau}\psi_1=0,\qquad
\partial_{\pmb\tau}\phi_1-\partial_{\pmb\nu}\psi_1=0,
\end{equation}
and
\begin{equation}\label{reduced_bc_2}
2\bigl(\mu_1\pmb\nu\cdot\partial_{\pmb\nu}\nabla\phi_1
+\mu_1\pmb\nu\cdot\partial_{\pmb\nu}\boldsymbol{\operatorname{curl}}\psi_1\bigr)
-\lambda_1\kappa_a^2\phi_1=0,
\end{equation}
\begin{equation}\label{reduced_bc_3}
2\bigl(\mu_1\pmb\tau\cdot\partial_{\pmb\nu}\nabla\phi_1
+\mu_1\pmb\tau\cdot\partial_{\pmb\nu}\boldsymbol{\operatorname{curl}}\psi_1\bigr)
-\mu_1\kappa_b^2\psi_1=0.
\end{equation}
From \eqref{reduced_bc_1}, we have
\begin{equation}\label{boundary condition zero}
\nabla\phi_1+\boldsymbol{\operatorname{curl}}\psi_1=0
\qquad\text{on }\partial D.
\end{equation}
Differentiating \eqref{boundary condition zero} tangentially gives
\[
\partial_{\pmb\tau}\bigl(\nabla\phi_1+\boldsymbol{\operatorname{curl}}\psi_1\bigr)=0
\qquad\text{on }\partial D.
\]
Using
\[
\nabla\cdot\bigl(\nabla\phi_1+\boldsymbol{\operatorname{curl}}\psi_1\bigr)
=
\pmb\nu\cdot\partial_{\pmb\nu}\bigl(\nabla\phi_1+\boldsymbol{\operatorname{curl}}\psi_1\bigr)
+\pmb\tau\cdot\partial_{\pmb\tau}\bigl(\nabla\phi_1+\boldsymbol{\operatorname{curl}}\psi_1\bigr),
\]
\[
\operatorname{curl}\bigl(\nabla\phi_1+\boldsymbol{\operatorname{curl}}\psi_1\bigr)
=
\pmb\tau\cdot\partial_{\pmb\nu}\bigl(\nabla\phi_1+\boldsymbol{\operatorname{curl}}\psi_1\bigr)
-\pmb\nu\cdot\partial_{\pmb\tau}\bigl(\nabla\phi_1+\boldsymbol{\operatorname{curl}}\psi_1\bigr),
\]
we have
\[
\pmb\nu\cdot\partial_{\pmb\nu}\bigl(\nabla\phi_1+\boldsymbol{\operatorname{curl}}\psi_1\bigr)
=
\nabla\cdot\bigl(\nabla\phi_1+\boldsymbol{\operatorname{curl}}\psi_1\bigr)
=
\Delta\phi_1
=
-\kappa_a^2\phi_1
\qquad\text{on }\partial D,
\]
and
\[
\pmb\tau\cdot\partial_{\pmb\nu}\bigl(\nabla\phi_1+\boldsymbol{\operatorname{curl}}\psi_1\bigr)
=
\operatorname{curl}\bigl(\nabla\phi_1+\boldsymbol{\operatorname{curl}}\psi_1\bigr)
=
-\Delta\psi_1
=
\kappa_b^2\psi_1
\qquad\text{on }\partial D.
\]
Substituting these two relations into \eqref{reduced_bc_2} and
\eqref{reduced_bc_3}, we obtain
\[
-(\lambda_1+2\mu_1)\kappa_a^2\phi_1=0,\qquad
\mu_1\kappa_b^2\psi_1=0
\qquad\text{on }\partial D.
\]
Hence,
\[
\phi_1=0,\qquad \psi_1=0 \qquad\text{on }\partial D.
\]
Therefore,
\[
\partial_{\pmb\tau}\phi_1=0,\qquad \partial_{\pmb\tau}\psi_1=0
\qquad\text{on }\partial D,
\]
and then \eqref{reduced_bc_1} yields
\[
\partial_{\pmb\nu}\phi_1=0,\qquad \partial_{\pmb\nu}\psi_1=0
\qquad\text{on }\partial D.
\]
Thus $\phi_1$ and $\psi_1$ satisfy Helmholtz equations in $D$ with vanishing
Cauchy data on $\partial D$. By Holmgren's theorem for the
Helmholtz equation \cite[Theorem 2.3]{Kress}, it follows that
\[
\phi_1=\psi_1=0 \qquad\text{in }D,
\]
which completes the proof.
\end{proof}

The scattered field $\pmb{v}$ admits the asymptotic expansion \cite{TArens}
\[
\pmb{v}(\pmb{x})
= \frac{  {\rm e}^{\mathrm{i}\kappa_{p}|\pmb{x}|}}{\sqrt{|\pmb{x}|}}
v_{p}^{\infty}(\hat{\pmb{x}})\hat{\pmb{x}}
+ \frac{  {\rm e}^{\mathrm{i}\kappa_{s}|\pmb{x}|}}{\sqrt{|\pmb{x}|}}
v_{s}^{\infty}(\hat{\pmb{x}})\hat{\pmb{x}}^{\perp}
+ \mathcal O\bigl(|\pmb{x}|^{-\tfrac{3}{2}}\bigr),\quad\hat{\pmb{x}}=\frac{\pmb{x}}{\left|\pmb{x}\right|},
\quad \left|\pmb{x}\right|\to\infty,
\]
defining the compressional and shear far-field patterns \(v_{p}^{\infty}\) and \(v_{s}^{\infty}\) 
on the unit circle $\mathbb{S}$. Furthermore, it follows from \cite[Theorem 3.1]{DongL} that
\begin{equation}\label{far}
	v_{p}^{\infty}(\hat{\pmb{x}}) = \mathrm{i}\kappa_{p}
	\phi_2^{\infty}(\hat{\pmb{x}}),
	\qquad
	v_{s}^{\infty}(\hat{\pmb{x}}) = -\mathrm{i}\kappa_{s}
	\psi_2^{\infty}(\hat{\pmb{x}}).
\end{equation}
Here, \(\phi_2^{\infty}\) and \(\psi_2^{\infty}\) denote the far-field patterns of \(\phi_2\) and \(\psi_2\), respectively.

This work addresses two fundamental problems in elastic scattering: the direct problem and the inverse problem associated with a penetrable obstacle. They are formally stated as follows
\begin{problem}
	\textbf{(Direct Problem)} Give the incident field $\pmb{u}$ and the information of penetrable elastic obstacle $\partial D$, determine the compressional and shear far-field patterns \(v_{p}^{\infty}\) and \(v_{s}^{\infty}\) on the unit circle \(\mathbb{S}\).
\end{problem}
\begin{problem}
	\textbf{(Inverse Problem)} Given the incident field \(\pmb{u}\) and the far-field patterns \(v_{p}^{\infty}\) and \(v_{s}^{\infty}\) measured on \(\mathbb{S}\), reconstruct the location and shape of the penetrable elastic obstacle \(D\).
\end{problem}

\section{The direct scattering problem}
This section develops the boundary integral equations for the coupled problem \eqref{2.5} and presents a Nystr\"{o}m-type discretization method. The last two conditions in \eqref{2.5} involve second-order boundary traces after the Helmholtz decomposition. Consequently, the resulting boundary integral formulation is a coupled four-density system and relies essentially on the jump relations for the second derivatives of the single-layer potential. For the evaluation of the singular integrals arising in this system, we follow the approaches in \cite{Drule} and \cite{Kress}, which successfully implement Nystr\"{o}m-type methods for acoustic and elastic scattering problems involving hypersingular integral equations.

\subsection{Boundary Integral Equations}

The single-layer integral operator and the corresponding far-field integral operator are defined by
\begin{equation*}
	\begin{aligned}
		S_{\kappa}[g](\pmb{x})
		&=\int_{\partial D}\varPhi(\pmb{x},\pmb{y};\kappa)g(\pmb{y})\mathrm{d}s(\pmb{y}), 
		\quad \pmb{x}\in\partial D,\\
		S_{\kappa}^{\infty}[g](\hat{\pmb{x}})
		&=\gamma_{\kappa}
		\int_{\partial D}{\rm e}^{-\mathrm{i}\kappa\hat{\pmb{x}}\cdot\pmb{y}}
		g(\pmb{y})\mathrm{d}s(\pmb{y}), 
		\quad \hat{\pmb{x}}\in\mathbb{S},
	\end{aligned}
\end{equation*}
where 
$
\gamma_{\kappa} = {\rm e}^{\mathrm{i}\pi/4}/\sqrt{8\pi\kappa}, 
$ and 
$
\varPhi(\pmb{x},\pmb{y};\kappa)
=\frac{\mathrm{i}}{4}H_{0}^{(1)}  \bigl(\kappa\lvert\pmb{x}-\pmb{y}\rvert\bigr),
~ \pmb{x}\neq\pmb{y},
$
is the fundamental solution of the two-dimensional Helmholtz equation. Here, \(H_{0}^{(1)}\) denotes the Hankel function of the first kind with order zero. 
We note that the quantity inside the bracket \([\cdot]\) is associated with the variable $\pmb{y}$, while the resulting function depends on the variable outside the bracket. This notational convention will be consistently employed throughout this paper.

Additionally, the boundary integral operators associated with the normal and tangential derivatives of the single-layer potential are defined as
\begin{align*}
	N_{\kappa}[g](\pmb{x})
	&=\int_{\partial D}\frac{\partial}{\partial\pmb{\nu}(\pmb{x})}
	\varPhi(\pmb{x},\pmb{y};\kappa)g(\pmb{y})\mathrm{d}s(\pmb{y}),
	\quad
	\pmb{x}\in\partial D,\\
	T_{\kappa}[g](\pmb{x})
	&=\int_{\partial D}
	\frac{\partial}{\partial\pmb{\tau}(\pmb{x})}
	\varPhi(\pmb{x},\pmb{y};\kappa)g(\pmb{y})\mathrm{d}s(\pmb{y}),
	\quad
	\pmb{x}\in\partial D.
\end{align*}

Assume that the solutions to the system \eqref{2.5} admit the following single-layer potential representations

	\begin{align}\label{single1}
		&\phi_1(\pmb{x})
		= \int_{\partial D} \varPhi(\pmb{x},\pmb{y};\kappa_{a})g_1(\pmb{y})\mathrm{d}s(\pmb{y}), 
		~~
		\psi_1(\pmb{x})
		= \int_{\partial D} \varPhi(\pmb{x},\pmb{y};\kappa_{b})g_2(\pmb{y})\mathrm{d}s(\pmb{y}),
		 ~~\pmb{x}\in D,\\
		\label{single2}
		&\phi_2(\pmb{x})
		= \int_{\partial D} \varPhi(\pmb{x},\pmb{y};\kappa_{p})g_3(\pmb{y})\mathrm{d}s(\pmb{y}), 
		~~
		\psi_2(\pmb{x})
		= \int_{\partial D} \varPhi(\pmb{x},\pmb{y};\kappa_{s})g_4(\pmb{y})\mathrm{d}s(\pmb{y}),
		~~ \pmb{x}\in\mathbb{R}^2\setminus\overline{D},
\end{align}where the densities \(g_i\in C^{1,\alpha}(\partial D)\) for \(i=1,\cdots,4\) and $0<\alpha<1$.

By taking the limit in \eqref{single1} as $\pmb{x}$ approaches $\partial D$ from inside $D$, and in \eqref{single2} as $\pmb{x}$ approaches $\partial D$ from $\mathbb{R}^2 \setminus \overline{D}$, and then incorporating the jump relations \cite[Corollary 3.5]{Djump} with the transmission conditions \eqref{bc}, we derive the following system of boundary integral equations on \(\partial D\)
\begin{align}
	f_1 &= g_1/2 + N_{\kappa_{a}}[g_1] + T_{\kappa_{b}}[g_2] + g_3/2 - N_{\kappa_{p}}[g_3] - T_{\kappa_{s}}[g_4], \label{3.3a}\\
	f_2 &= T_{\kappa_{a}}[g_1] - g_2/2 - N_{\kappa_{b}}[g_2] - T_{\kappa_{p}}[g_3] - g_4/2 + N_{\kappa_{s}}[g_4], \label{3.3b}\\
	f_3 &= 2\bigl( -\mu_1\kappa_{a}^2\pmb{\nu}^\top S_{\kappa_{a}}[\pmb{\nu}\pmb{\nu}^\top g_1]\pmb{\nu}
	+ \mu_1\pmb{\nu}^\top N_{\kappa_{a}}[\pmb{\tau}\partial_{\pmb{\tau}} g_1 + g_1\partial_{\pmb{\tau}}\pmb{\tau}]
	- \mu_1\pmb{\nu}^\top T_{\kappa_{a}}[\pmb{\nu}\partial_{\pmb{\tau}} g_1 + g_1\partial_{\pmb{\tau}}\pmb{\nu}] \notag\\
	&\quad + \mu_1\kappa_{b}^2\pmb{\nu}^\top S_{\kappa_{b}}[\pmb{\tau}\pmb{\nu}^\top g_2]\pmb{\nu}
	+ \mu_1\pmb{\nu}^\top N_{\kappa_{b}}[\pmb{\nu}\partial_{\pmb{\tau}} g_2 + g_2\partial_{\pmb{\tau}}\pmb{\nu}]
	+ \mu_1\pmb{\nu}^\top T_{\kappa_{b}}[\pmb{\tau}\partial_{\pmb{\tau}} g_2 + g_2\partial_{\pmb{\tau}}\pmb{\tau}] \notag\\
	&\quad + \mu_2\kappa_{p}^2\pmb{\nu}^\top S_{\kappa_{p}}[\pmb{\nu}\pmb{\nu}^\top g_3]\pmb{\nu}
	- \mu_2\pmb{\nu}^\top N_{\kappa_{p}}[\pmb{\tau}\partial_{\pmb{\tau}} g_3 + g_3\partial_{\pmb{\tau}}\pmb{\tau}]
	+ \mu_2\pmb{\nu}^\top T_{\kappa_{p}}[\pmb{\nu}\partial_{\pmb{\tau}} g_3 + g_3\partial_{\pmb{\tau}}\pmb{\nu}] \notag\\
	&\quad - \mu_2\kappa_{s}^2\pmb{\nu}^\top S_{\kappa_{s}}[\pmb{\tau}\pmb{\nu}^\top g_4]\pmb{\nu}
	- \mu_2\pmb{\nu}^\top N_{\kappa_{s}}[\pmb{\nu}\partial_{\pmb{\tau}} g_4 + g_4\partial_{\pmb{\tau}}\pmb{\nu}]
	- \mu_2\pmb{\nu}^\top T_{\kappa_{s}}[\pmb{\tau}\partial_{\pmb{\tau}} g_4 + g_4\partial_{\pmb{\tau}}\pmb{\tau}] \bigr) \notag\\
	&\quad - \lambda_1\kappa_{a}^2 S_{\kappa_{a}}[g_1] + \lambda_2\kappa_{p}^2 S_{\kappa_{p}}[g_3]
	+ \mu_1(\pmb{\nu} \cdot \partial_{\pmb{\tau}}\pmb{\tau}) g_1 + \mu_1(\pmb{\nu} \cdot \partial_{\pmb{\tau}}\pmb{\nu}) g_2 + \mu_1\partial_{\pmb{\tau}} g_2 \notag\\
	&\quad + \mu_2(\pmb{\nu} \cdot \partial_{\pmb{\tau}}\pmb{\tau}) g_3 + \mu_2(\pmb{\nu} \cdot \partial_{\pmb{\tau}}\pmb{\nu}) g_4 + \mu_2\partial_{\pmb{\tau}} g_4, \label{3.3c}\\
	f_4 &= 2\bigl( -\mu_1\kappa_{a}^2\pmb{\tau}^\top S_{\kappa_{a}}[\pmb{\nu}\pmb{\nu}^\top g_1]\pmb{\nu}
	+ \mu_1\pmb{\tau}^\top N_{\kappa_{a}}[\pmb{\tau}\partial_{\pmb{\tau}} g_1 + g_1\partial_{\pmb{\tau}}\pmb{\tau}]
	- \mu_1\pmb{\tau}^\top T_{\kappa_{a}}[\pmb{\nu}\partial_{\pmb{\tau}} g_1 + g_1\partial_{\pmb{\tau}}\pmb{\nu}] \notag\\
	&\quad + \mu_1\kappa_{b}^2\pmb{\tau}^\top S_{\kappa_{b}}[\pmb{\tau}\pmb{\nu}^\top g_2]\pmb{\nu}
	+ \mu_1\pmb{\tau}^\top N_{\kappa_{b}}[\pmb{\nu}\partial_{\pmb{\tau}} g_2 + g_2\partial_{\pmb{\tau}}\pmb{\nu}]
	+ \mu_1\pmb{\tau}^\top T_{\kappa_{b}}[\pmb{\tau}\partial_{\pmb{\tau}} g_2 + g_2\partial_{\pmb{\tau}}\pmb{\tau}] \notag\\
	&\quad + \mu_2\kappa_{p}^2\pmb{\tau}^\top S_{\kappa_{p}}[\pmb{\nu}\pmb{\nu}^\top g_3]\pmb{\nu}
	- \mu_2\pmb{\tau}^\top N_{\kappa_{p}}[\pmb{\tau}\partial_{\pmb{\tau}} g_3 + g_3\partial_{\pmb{\tau}}\pmb{\tau}]
	+ \mu_2\pmb{\tau}^\top T_{\kappa_{p}}[\pmb{\nu}\partial_{\pmb{\tau}} g_3 + g_3\partial_{\pmb{\tau}}\pmb{\nu}] \notag\\
	&\quad - \mu_2\kappa_{s}^2\pmb{\tau}^\top S_{\kappa_{s}}[\pmb{\tau}\pmb{\nu}^\top g_4]\pmb{\nu}
	- \mu_2\pmb{\tau}^\top N_{\kappa_{s}}[\pmb{\nu}\partial_{\pmb{\tau}} g_4 + g_4\partial_{\pmb{\tau}}\pmb{\nu}]
	- \mu_2\pmb{\tau}^\top T_{\kappa_{s}}[\pmb{\tau}\partial_{\pmb{\tau}} g_4 + g_4\partial_{\pmb{\tau}}\pmb{\tau}] \bigr) \notag\\
	&\quad - \mu_1\kappa_{b}^2 S_{\kappa_{b}}[g_2] + \mu_2\kappa_{s}^2 S_{\kappa_{s}}[g_4]
	+ \mu_1(\pmb{\tau} \cdot \partial_{\pmb{\tau}}\pmb{\tau}) g_1 + \mu_1\partial_{\pmb{\tau}} g_1 + \mu_1(\pmb{\tau} \cdot \partial_{\pmb{\tau}}\pmb{\nu}) g_2 \notag\\
	&\quad + \mu_2(\pmb{\tau} \cdot \partial_{\pmb{\tau}}\pmb{\tau}) g_3 + \mu_2\partial_{\pmb{\tau}} g_3 + \mu_2(\pmb{\tau} \cdot \partial_{\pmb{\tau}}\pmb{\nu}) g_4. \label{3.3d}
\end{align}

\begin{theorem}
	Assume that $\kappa_p$ and $\kappa_s$ are not interior Dirichlet eigenvalues for the Helmholtz equation in $D$. Then the system of boundary integral equations \eqref{3.3a}--\eqref{3.3d} has at most one solution in $C^{1,\alpha}(\partial D)^4$.
\end{theorem}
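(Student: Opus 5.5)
The plan follows the standard route for uniqueness of single-layer boundary integral equations. Suppose $(g_1,g_2,g_3,g_4)\in C^{1,\alpha}(\partial D)^4$ solves \eqref{3.3a}--\eqref{3.3d} with $f_1=f_2=f_3=f_4=0$. Define $\phi_1,\psi_1$ in $D$ and $\phi_2,\psi_2$ in $\mathbb{R}^2\setminus\overline D$ by the single-layer potentials \eqref{single1}--\eqref{single2}. By construction (the derivation via the jump relations of \cite[Corollary 3.5]{Djump}), the homogeneous system says exactly that these four potentials satisfy the homogeneous transmission conditions of \eqref{2.5}. Single-layer potentials with $C^{1,\alpha}$ densities have the regularity needed for the second-order traces. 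They satisfy the Helmholtz equations with the correct wavenumbers, and $\phi_2,\psi_2$ are radiating. Hence Theorem~\ref{uniqueness of coupled system} gives
\[
\phi_1=\psi_1=0 \ \text{in } D,\qquad \phi_2=\psi_2=0 \ \text{in } \mathbb{R}^2\setminus\overline D .
\]

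Next, extend each potential to the complementary domain using the same density and wavenumber. Consider $\tilde\phi_2(\pmb x)=\int_{\partial D}\varPhi(\pmb x,\pmb y;\kappa_p)g_3(\pmb y)\,ds(\pmb y)$ for $\pmb x\in D$. Since the single-layer potential is continuous across $\partial D$ and vanishes outside, $\tilde\phi_2=0$ on $\partial D$. Also $\Delta\tilde\phi_2+\kappa_p^2\tilde\phi_2=0$ in $D$. Because $\kappa_p$ is not an interior Dirichlet eigenvalue, $\tilde\phi_2=0$ in $D$. The jump relation for the normal derivative then gives
\[
g_3=\partial_{\pmb\nu}\tilde\phi_2^{-}-\partial_{\pmb\nu}\phi_2^{+}=0 .
\]
The same argument with $\kappa_s$ gives $g_4=0$. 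This is where the eigenvalue hypothesis enters.

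For $g_1,g_2$, extend $\phi_1$ and $\psi_1$ to $\mathbb{R}^2\setminus\overline D$ by the same formulas with $\kappa_a,\kappa_b$. These extensions are radiating solutions of the exterior Helmholtz problem with zero Dirichlet data on $\partial D$, since the trace is continuous and vanishes from inside. Uniqueness of the exterior Dirichlet problem, which holds for every $\kappa>0$ (Rellich's lemma), makes them vanish. The normal-derivative jump then yields $g_1=g_2=0$. No eigenvalue assumption is needed for $\kappa_a,\kappa_b$, which explains why the hypothesis concerns only $\kappa_p,\kappa_s$.

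I expect the main obstacle to be the first step: confirming rigorously that the homogeneous system \eqref{3.3a}--\eqref{3.3d} is equivalent to the homogeneous transmission conditions. In particular, the one-sided limits of the second derivatives of the single-layer potentials must reproduce $m_i,n_i$ exactly, including the jump terms involving $\partial_{\pmb\tau}g$ and curvature. It must also be checked that the potentials lie in $C^2$ of each domain and in $C^1$ up to the boundary, with second derivatives having boundary limits, so that the hypotheses of Theorem~\ref{uniqueness of coupled system} are met. I would handle this by citing the $C^{1,\alpha}$ mapping properties and the second-derivative jump relations of \cite{Djump}, and by observing that the system was derived precisely by applying these relations to the traces in \eqref{2.5}. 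Every identity used in the derivation is therefore reversible.
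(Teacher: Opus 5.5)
Your proposal is correct and follows essentially the same route as the paper. You apply Theorem~\ref{uniqueness of coupled system} to the single-layer potentials to get $\phi_1=\psi_1=0$ in $D$ and $\phi_2=\psi_2=0$ in $\mathbb{R}^2\setminus\overline D$, then continue each potential across $\partial D$ (exterior Dirichlet uniqueness for $\kappa_a,\kappa_b$, and the non-eigenvalue assumption for the interior Dirichlet problems with $\kappa_p,\kappa_s$), and conclude $g_1=\dots=g_4=0$ from the normal-derivative jump, with the paper likewise treating the first step as holding by construction of the integral equations.
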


\begin{proof}
	It suffices to show $g_1=g_2=g_3=g_4=0$ in $ C^{1,\alpha}(\partial D)$ when $f_1=f_2=f_3=f_4=0$.
	 Define the potentials by the single-layer representations \eqref{single1}--\eqref{single2}.
	By construction, the jump relations imply that these potentials satisfy the homogeneous coupled boundary value problem \eqref{2.5}. Hence, by Theorem~\ref{uniqueness of coupled system},
	\[
	\phi_1=\psi_1=0\quad\text{in }~D,
	\qquad
	\phi_2=\psi_2=0\quad\text{in }~\mathbb{R}^2\setminus\overline D.
	\]
	
    Define the global single-layer fields
	\[
	W_1(\pmb{x})=\int_{\partial D}\varPhi(\pmb{x},\pmb{y};\kappa_a)g_1(\pmb{y})ds(\pmb{y}),
	\qquad
	W_2(\pmb{x})=\int_{\partial D}\varPhi(\pmb{x},\pmb{y};\kappa_b)g_2(\pmb{y})ds(\pmb{y}),\quad  \pmb{x}\in\mathbb{R}^2\setminus\partial D.
	\]
	Their interior restrictions are precisely $\phi_1$ and $\psi_1$, so
	\[
	W_1^-=0,
	\qquad
	W_2^-=0
	\qquad\text{in }~D.
	\]
	Since the single layer potential is continuous on $\partial D$, we have
	\[
	W_1=0,
	\qquad
	W_2=0
	\qquad\text{on }\partial D.
	\]
	Now $W_1^+$ and $W_2^+$ are radiating exterior Helmholtz solutions with zero Dirichlet boundary data. By uniqueness of the exterior Helmholtz Dirichlet problem,
	\[
	W_1^+=0,
	\qquad
	W_2^+=0
	\qquad\text{in }~\mathbb{R}^2\setminus\overline D.
	\]
	Therefore their normal derivatives vanish on both sides of $\partial D$, and the jump relation for the normal derivative of the single layer potential yields
	\[
	\partial_{\pmb{\nu}}^-W_1-\partial_{\pmb{\nu}}^+W_1=g_1,
	\qquad
	\partial_{\pmb{\nu}}^-W_2-\partial_{\pmb{\nu}}^+W_2=g_2.
	\]
	Hence
	\[
	g_1=0,
	\qquad
	g_2=0\qquad \text{on }~\partial D.
	\]
	
	Similarly, define
	\[
	W_3(\pmb{x})=\int_{\partial D}\varPhi(\pmb{x},\pmb{y};\kappa_p)g_3(\pmb{y})ds(\pmb{y}),
	\qquad
	W_4(\pmb{x})=\int_{\partial D}\varPhi(\pmb{x},\pmb{y};\kappa_s)g_4(\pmb{y})ds(\pmb{y}).
	\]
	Their exterior restrictions are $\phi_2$ and $\psi_2$, hence
	\[
	W_3^+=0,
	\qquad
	W_4^+=0
	\qquad\text{in }~\mathbb{R}^2\setminus\overline D.
	\]
	By continuity of the single layer potential,
	\[
	W_3=0,
	\qquad
	W_4=0
	\qquad\text{on }~\partial D.
	\]
	Since $\kappa_p$ and $\kappa_s$ are not interior Dirichlet eigenvalues for the Helmholtz equation, the corresponding interior Dirichlet problems are uniquely solvable. Therefore,
	\[
	W_3^-=0,
	\qquad
	W_4^-=0
	\qquad\text{in }~D.
	\]
	Again the normal derivatives vanish on both sides, so the jump relation gives
	\[
	g_3=0,
	\qquad
	g_4=0\qquad \text{on }~\partial D.
	\]
	Thus $g_1=g_2=g_3=g_4=0$, and the proof is complete.
\end{proof}

\begin{remark}
The above boundary integral formulation can be extended to multi-obstacle configurations with appropriate boundary or transmission conditions imposed on each component. Due to multiple scattering among the obstacles, the total far-field pattern cannot be obtained by simply adding the far-field patterns of the corresponding isolated obstacles. As in the single-obstacle case, one is led to a coupled system of boundary integral equations for the boundary unknowns on all components, which captures the interactions among the obstacles. As a representative example, Appendix \ref{appendix a} presents the coupled boundary integral equations for a two-obstacle configuration consisting of one penetrable obstacle and one impenetrable obstacle subject to the Dirichlet boundary condition.
\end{remark}

\subsection{Parametrization}
We employ a parametric representation of the boundary \(\partial D\) as an analytic curve
\[
\partial D : =  \{\pmb p(\xi)=(p_1(\xi),p_2(\xi))^\top
: \xi\in [0,2\pi) \}.
\]
The boundary integral operators \(S_\kappa\), \(S_\kappa^\infty\), \(N_\kappa\) and \(T_\kappa\) are expressed in paramet-rized form as
\begin{align*}
	(S_\kappa[J_{\pmb{p}}\varrho])(\xi) 
	&= \int_{0}^{2\pi} \widetilde {S}(\xi,\varsigma;\kappa)
	J_{\pmb{p}}(\varsigma)\varrho(\varsigma)\mathrm{d}\varsigma,\\
	(S_\kappa^\infty[J_{\pmb{p}}\varrho])(\xi)
	&= \gamma_\kappa
	\int_{0}^{2\pi}
	{\rm e}^{-\mathrm{i}\kappa\pmb {\hat x}(\xi)  \cdot  \pmb p(\varsigma)}
	J_{\pmb{p}}(\varsigma)\varrho(\varsigma)\mathrm{d}\varsigma,\\
	(N_\kappa[J_{\pmb{p}}\varrho])(\xi)
	&= \frac{1}{J_{\pmb{p}}(\xi)}
	\int_{0}^{2\pi} \widetilde N(\xi,\varsigma;\kappa)
	J_{\pmb{p}}(\varsigma)\varrho(\varsigma)\mathrm{d}\varsigma,\\
	(T_\kappa[J_{\pmb{p}}\varrho])(\xi)
	&= \frac{1}{J_{\pmb{p}}(\xi)}
	\int_{0}^{2\pi} \widetilde T(\xi,\varsigma;\kappa)
	J_{\pmb{p}}(\varsigma)\varrho(\varsigma)\mathrm{d}\varsigma,
\end{align*}
where 
\(
\varrho=(g\circ \pmb{p}), ~
J_{\pmb{p}}(\varsigma)=\lvert \pmb{p}'(\varsigma)\rvert
\)
is the Jacobian of the transformation, and the kernel functions are given by
\begin{align*}
	\widetilde {S}(\xi,\varsigma;\kappa)
	&= \frac{\mathrm{i}}{4}
	H_{0}^{(1)}  \bigl(\kappa\lvert \pmb p(\xi)-\pmb p(\varsigma)\rvert\bigr),\\
	\widetilde N(\xi,\varsigma;\kappa)
	&= \frac{\mathrm{i}\kappa}{4}
	\pmb{n}(\xi)  \cdot  \bigl(\pmb p(\varsigma)-\pmb p(\xi)\bigr)
	\frac{H_{1}^{(1)}  \bigl(\kappa\lvert \pmb p(\xi)-\pmb p(\varsigma)\rvert\bigr)}
	{\lvert \pmb p(\xi)-\pmb p(\varsigma)\rvert},\\
	\widetilde T(\xi,\varsigma;\kappa)
	&= \frac{\mathrm{i}\kappa}{4}
	\pmb{n}(\xi)^{\perp}  \cdot  \bigl(\pmb p(\varsigma)-\pmb p(\xi)\bigr)
	\frac{H_{1}^{(1)}  \bigl(\kappa\lvert \pmb p(\xi)-\pmb p(\varsigma)\rvert\bigr)}
	{\lvert \pmb p(\xi)-\pmb p(\varsigma)\rvert},
\end{align*}
with $H_{1}^{(1)}$ denoting the Hankel function of the first kind of order one. Furthermore,
\[
\pmb{n}(\xi)
= \tilde{\pmb{\nu}}(\xi)\lvert \pmb p'(\xi)\rvert
= \bigl(p'_2(\xi),-p'_1(\xi)\bigr)^\top,
\quad
\pmb{n}(\xi)^\perp
= \tilde{\pmb{\tau}}(\xi)\lvert \pmb p'(\xi)\rvert
= \bigl(p'_1(\xi),p'_2(\xi)\bigr)^\top,
\]
and \(\tilde{\pmb{\nu}}=\pmb {\nu}\circ \pmb p\), \(\tilde{\pmb{\tau}}=\pmb{\tau}\circ \pmb p\).

Consequently, the boundary integral equations \eqref{3.3a}-\eqref{3.3d} can be reformulated in parametrized form as follows
\begin{align}
	w_1=&\tilde{\varrho_1}J_{\pmb{p}}/2+N_{\kappa_{a}}[\tilde{\varrho_1}J_{\pmb{p}}^2]+T_{\kappa_{b}}[\tilde{\varrho_2}J_{\pmb{p}}^2]+\tilde{\varrho_3}J_{\pmb{p}}/2-N_{\kappa_{p}}[\tilde{\varrho_3}J_{\pmb{p}}^2]-T_{\kappa_{s}}[\tilde{\varrho_4}J_{\pmb{p}}^2], \label{par1}\\
	w_2=&T_{\kappa_{a}}[\tilde{\varrho_1}J_{\pmb{p}}^2]-\tilde{\varrho_2}J_{\pmb{p}}/2-N_{\kappa_{b}}[\tilde{\varrho_2}J_{\pmb{p}}^2]-T_{\kappa_{p}}[\tilde{\varrho_3}J_{\pmb{p}}^2]-\tilde{\varrho_4}J_{\pmb{p}}/2+N_{\kappa_{s}}[\tilde{\varrho_4}J_{\pmb{p}}^2],\label{par2}\\
	w_3=&2(-\mu_1\kappa_{a}^2\tilde{\pmb{\nu}}^{\top}S_{\kappa_{a}}[\pmb{n}\pmb{n}^{\top}\tilde{\varrho_1}]\tilde{\pmb{\nu}}+\mu_1\tilde{\pmb{\nu}}^{\top}N_{\kappa_{a}}[\pmb{n}^{\perp}\tilde{\varrho_1}'+\pmb{n}^{\perp '}\tilde{\varrho_1}]-\mu_1\tilde{\pmb{\nu}}^{\top}T_{\kappa_{a}}[\pmb{n}\tilde{\varrho_1}'+\pmb{n}'\tilde{\varrho_1}]\notag\\
	&+\mu_1\kappa_{b}^2\tilde{\pmb{\nu}}^{\top}S_{\kappa_{b}}[\pmb{n}^{\perp}\pmb{n}^{\top}\tilde{\varrho_2}]\tilde{\pmb{\nu}}+\mu_1\tilde{\pmb{\nu}}^{\top}N_{\kappa_{b}}[\pmb{n}\tilde{\varrho_2}'+\pmb{n}'\tilde{\varrho_2}]+\mu_1\tilde{\pmb{\nu}}^{\top}T_{\kappa_{b}}[\pmb{n}^{\perp}\tilde{\varrho_2}'+\pmb{n}^{\perp '}\tilde{\varrho_2}]\notag\\
	&+\mu_2\kappa_{p}^2\tilde{\pmb{\nu}}^{\top}S_{\kappa_{p}}[\pmb{n}\pmb{n}^{\top}\tilde{\varrho_3}]\tilde{\pmb{\nu}}-\mu_2\tilde{\pmb{\nu}}^{\top}N_{\kappa_{p}}[\pmb{n}^{\perp}\tilde{\varrho_3}'+\pmb{n}^{\perp '}\tilde{\varrho_3}]+\mu_2\tilde{\pmb{\nu}}^{\top}T_{\kappa_{p}}[\pmb{n}\tilde{\varrho_3}'+\pmb{n}'\tilde{\varrho_3}]\notag\\
	&-\mu_2\kappa_{s}^2\tilde{\pmb{\nu}}^{\top}S_{\kappa_{s}}[\pmb{n}^{\perp}\pmb{n}^{\top}\tilde{\varrho_4}]\tilde{\pmb{\nu}}-\mu_2\tilde{\pmb{\nu}}^{\top}N_{\kappa_{s}}[\pmb{n}\tilde{\varrho_4}'+\pmb{n}'\tilde{\varrho_4}]-\mu_2\tilde{\pmb{\nu}}^{\top}T_{\kappa_{s}}[\pmb{n}^{\perp}\tilde{\varrho_4}'+\pmb{n}^{\perp '}\tilde{\varrho_4}])\notag\\		
	&-\lambda_1\kappa_{a}^2S_{\kappa_{a}}[\tilde{\varrho_1}J_{\pmb{p}}^2]+\lambda_2\kappa_{p}^2S_{\kappa_{p}}[\tilde{\varrho_3}J_{\pmb{p}}^2]+\mu_1(\tilde{\pmb{\nu}}\cdot \pmb{n}^{\perp '})\tilde{\varrho_1}/J_{\pmb{p}}+\mu_1(\tilde{\pmb{\nu}}\cdot \pmb{n}')\tilde{\varrho_2}/J_{\pmb{p}}\notag\\
	&+\mu_1\tilde{\varrho_2}'+\mu_2(\tilde{\pmb{\nu}}\cdot \pmb{n}^{\perp '})\tilde{\varrho_3}/J_{\pmb{p}}+\mu_2(\tilde{\pmb{\nu}}\cdot \pmb{n}')\tilde{\varrho_4}/J_{\pmb{p}}+\mu_2\tilde{\varrho_4}',\label{par3}
\\
	w_4=&2(-\mu_1\kappa_{a}^2\tilde{\pmb{\tau}}^{\top}S_{\kappa_{a}}[\pmb{n}\pmb{n}^{\top}\tilde{\varrho_1}]\tilde{\pmb{\nu}}+\mu_1\tilde{\pmb{\tau}}^{\top}N_{\kappa_{a}}[\pmb{n}^{\perp}\tilde{\varrho_1}'+\pmb{n}^{\perp '}\tilde{\varrho_1}]-\mu_1\tilde{\pmb{\tau}}^{\top}T_{\kappa_{a}}[\pmb{n}\tilde{\varrho_1}'+\pmb{n}'\tilde{\varrho_1}]\notag\\
	&+\mu_1\kappa_{b}^2\tilde{\pmb{\tau}}^{\top}S_{\kappa_{b}}[\pmb{n}^{\perp}\pmb{n}^{\top}\tilde{\varrho_2}]\tilde{\pmb{\nu}}+\mu_1\tilde{\pmb{\tau}}^{\top}N_{\kappa_{b}}[\pmb{n}\tilde{\varrho_2}'+\pmb{n}'\tilde{\varrho_2}]+\mu_1\tilde{\pmb{\tau}}^{\top}T_{\kappa_{b}}[\pmb{n}^{\perp}\tilde{\varrho_2}'+\pmb{n}^{\perp '}\tilde{\varrho_2}]\notag\\
	&+\mu_2\kappa_{p}^2\tilde{\pmb{\tau}}^{\top}S_{\kappa_{p}}[\pmb{n}\pmb{n}^{\top}\tilde{\varrho_3}]\tilde{\pmb{\nu}}-\mu_2\tilde{\pmb{\tau}}^{\top}N_{\kappa_{p}}[\pmb{n}^{\perp}\tilde{\varrho_3}'+\pmb{n}^{\perp '}\tilde{\varrho_3}]+\mu_2\tilde{\pmb{\tau}}^{\top}T_{\kappa_{p}}[\pmb{n}\tilde{\varrho_3}'+\pmb{n}'\tilde{\varrho_3}]\notag\\
	&-\mu_2\kappa_{s}^2\tilde{\pmb{\tau}}^{\top}S_{\kappa_{s}}[\pmb{n}^{\perp}\pmb{n}^{\top}\tilde{\varrho_4}]\tilde{\pmb{\nu}}-\mu_2\tilde{\pmb{\tau}}^{\top}N_{\kappa_{s}}[\pmb{n}\tilde{\varrho_4}'+\pmb{n}'\tilde{\varrho_4}]-\mu_2\tilde{\pmb{\tau}}^{\top}T_{\kappa_{s}}[\pmb{n}^{\perp}\tilde{\varrho_4}'+\pmb{n}^{\perp '}\tilde{\varrho_4}])\notag\\
	&-\mu_1\kappa_{b}^2S_{\kappa_{b}}[\tilde{\varrho_2}J_{\pmb{p}}^2]+\mu_2\kappa_{s}^2S_{\kappa_{s}}[\tilde{\varrho_4}J_{\pmb{p}}^2]+\mu_1(\tilde{\pmb{\tau}}\cdot \pmb{n}^{\perp '})\tilde{\varrho_1}/J_{\pmb{p}}+\mu_1\tilde{\varrho_1}'\notag\\
	&+\mu_1(\tilde{\pmb{\tau}}\cdot \pmb{n}')\tilde{\varrho_2}/J_{\pmb{p}}+\mu_2(\tilde{\pmb{\tau}}\cdot \pmb{n}^{\perp '})\tilde{\varrho_3}/J_{\pmb{p}}+\mu_2\tilde{\varrho_3}'+\mu_2(\tilde{\pmb{\tau}}\cdot \pmb{n}')\tilde{\varrho_4}/J_{\pmb{p}}.\label{par4}
\end{align}
where $w_{j}=(f_{j}\circ \pmb p)$, $\tilde{\varrho_{j}}=\varrho_{j}/J_{\pmb{p}}$, $\tilde{\varrho_{j}}'=(\varrho_{j}/J_{\pmb{p}})'$, $j=1,\ldots,4$ and $\pmb{n}'=(p''_2, -p''_1)^{\top}$, $\pmb{n}^{\perp '}=(p''_1,p''_2)^{\top}$.

A fully discretized scheme for equation \eqref{par1}-\eqref{par4}, which employs the Nystr\"{o}m-type method, is presented in the Appendix \ref{appendix b}.

\subsection{Numerical results}

To validate the effectiveness of the boundary integral equation method, we construct the following exact solutions
\begin{align*}
	\phi_{1}^{*} &= e^{\mathrm{i}\kappa_a\pmb{x}\cdot\pmb{d}},         & \psi_{1}^{*} &= e^{\mathrm{i}\kappa_b\pmb{x}\cdot\pmb{d}},         & \pmb{x} &\in D, \\
	\phi_{2}^{*} &= \frac{\mathrm{i}}{4}H_{0}^{(1)}\bigl(\kappa_p\lvert\pmb{x}-\pmb{y} \rvert\bigr), & \psi_{2}^{*} &= \frac{\mathrm{i}}{4}H_{0}^{(1)}\bigl(\kappa_s\lvert\pmb{x}-\pmb{y} \rvert\bigr), & \pmb{x} &\in \mathbb{R}^2\setminus\overline{D},
\end{align*}
which satisfy the coupled boundary value problem \eqref{2.5}. Here, $\pmb{d}=(1,0)^{\top}\in\mathbb{S}$ and $\pmb{y}=(0.2,0)^{\top}\in D$. The corresponding exact far-field patterns of $\phi_{2}^{*}$ and $\psi_{2}^{*}$ are given by
$$\phi_{2}^{\infty,*}=\gamma_{\kappa_p}e^{-\mathrm{i}\kappa_p\hat{\pmb{x}}\cdot\pmb{y}},\quad\psi_{2}^{\infty,*}=\gamma_{\kappa_s}e^{-\mathrm{i}\kappa_s\hat{\pmb{x}}\cdot\pmb{y}},\quad \hat{\pmb{x}}\in\mathbb{S}.
$$

The numerical solution is obtained by imposing the following boundary conditions on \(\partial D\)
\[
\begin{cases}
	\partial_{\pmb{\nu}}\phi_{1}^{*} + \partial_{\pmb{\tau}}\psi_{1}^{*}
	- \partial_{\pmb{\nu}}\phi_{2}^{*} - \partial_{\pmb{\tau}}\psi_{2}^{*}
	= f_{1},\\[3pt]
	\partial_{\pmb{\tau}}\phi_{1}^{*} - \partial_{\pmb{\nu}}\psi_{1}^{*}
	- \partial_{\pmb{\tau}}\phi_{2}^{*} + \partial_{\pmb{\nu}}\psi_{2}^{*}
	= f_{2},\\[3pt]
	2\bigl(\mu_{1}\pmb{\nu}  \cdot  \partial_{\pmb{\nu}}\nabla\phi_{1}^{*}
	+\mu_{1}\pmb{\nu}  \cdot  \partial_{\pmb{\nu}}\pmb{\operatorname{curl}}\psi_{1}^{*}\bigr)
	- \lambda_{1}\kappa_{a}^{2}\phi_{1}^{*}\\
	\qquad-2\bigl(\mu_{2}\pmb{\nu}  \cdot  \partial_{\pmb{\nu}}\nabla\phi_{2}^{*}
	+\mu_{2}\pmb{\nu}  \cdot  \partial_{\pmb{\nu}}\pmb{\operatorname{curl}}\psi_{2}^{*}\bigr)
	+ \lambda_{2}\kappa_{p}^{2}\phi_{2}^{*}
	= f_{3},\\[3pt]
	2\bigl(\mu_{1}\pmb{\tau}  \cdot  \partial_{\pmb{\nu}}\nabla\phi_{1}^{*}
	+\mu_{1}\pmb{\tau}  \cdot  \partial_{\pmb{\nu}}\pmb{\operatorname{curl}}\psi_{1}^{*}\bigr)
	- \mu_{1}\kappa_{b}^{2}\psi_{1}^{*}\\
	\qquad-2\bigl(\mu_{2}\pmb{\tau}  \cdot  \partial_{\pmb{\nu}}\nabla\phi_{2}^{*}
	+\mu_{2}\pmb{\tau}  \cdot  \partial_{\pmb{\nu}}\pmb{\operatorname{curl}}\psi_{2}^{*}\bigr)
	+ \mu_{2}\kappa_{s}^{2}\psi_{2}^{*}
	= f_{4}.
\end{cases}
\]
The computed values of \(f_{1}\), \(f_{2}\), \(f_{3}\), and \(f_{4}\) are then substituted into the original problem \eqref{2.5}. The Lam\'{e} constants are set to \(\lambda_1 = 5.5\), \(\mu_1 = 2.6\), \(\lambda_2 = 1.5\), and \(\mu_2 = 1\), with mass densities \(\rho_1 = 1\) for \(D\) and \(\rho_2 = 1.5\) for the exterior region \(\mathbb{R}^2 \setminus \overline{D}\). The number of observation points is set to 32 on both the measurement curve \(\partial B = \{\pmb{x} \in \mathbb{R}^2 : |\pmb{x}| = 0.2\}\) and the unit circle \(\mathbb{S}\). This specific set of Lam\'{e} constants and mass densities will also be used in the inverse scattering experiments in subsequent sections. All numerical tests are implemented by using Matlab 2025b on a personal laptop with a 48GB RAM, 4.50GHz Apple M4pro processor.

For the penetrable pear-shaped obstacle, we compute the relative errors
\begin{align*}
	E^{\mathrm{in}}(\varphi)=\frac{\|\varphi^{*}-\varphi^{(n)}\|_{L^2(\partial B)}}{\|\varphi^{*}\|_{L^2(\partial B)}},\quad
	E^{\infty}(\varphi)=\frac{\|\varphi^{\infty,*} - \varphi^{\infty,(n)}\|_{L^2(\mathbb{S})}}{\|\varphi^{\infty,*}\|_{L^2(\mathbb{S})}},
\end{align*}
for \(\omega = \pi\) and \(\omega = 10\pi\), as shown in Tables \ref{table2} and \ref{table3}. 
\begin{table}[h]
	\centering
	\caption{Relative errors for the pear-shaped penetrable obstacle with $\omega=\pi$}\label{table2}
	\begin{tabular}{@{}llllll@{}}
		\toprule
		$n$ & $E^{\mathrm{in}}(\phi_1)$ & $E^{\mathrm{in}}(\psi_1)$ & $E^{\infty}(\phi_2)$ & $E^{\infty}(\psi_2)$ & time \\
		\midrule
		8 & 0.0032& 0.0166 & 0.0071 & 0.0265 & 0.006s \\
		16 & 1.6252e-06& 1.7249e-06 &8.8402e-07 & 5.7974e-06 & 0.008s \\
		24 & 4.9990e-09& 7.2987e-09 & 2.5772e-10 & 1.7125e-09 & 0.011s \\
		32 & 2.9872e-13& 2.4718e-13 & 6.5495e-14 & 1.6336e-13 & 0.026s \\
		64 & 4.1614e-14 & 8.1235e-14 & 4.1397e-13 & 3.7103e-13 & 0.038s \\
		128 &  3.7783e-13 & 3.8529e-13 & 1.2112e-12 &  1.0340e-12 & 0.138s \\
		256 & 7.8033e-13 &  2.8276e-12 & 2.8269e-12 & 3.5567e-12 & 0.289s \\
		512 & 1.4381e-12 & 3.0792e-12 & 6.7556e-12 & 1.0955e-11 & 1.188s\\
		\bottomrule
	\end{tabular}
\end{table}
The results indicate that increasing the number of integration nodes \(n\) significantly improves accuracy at the cost of longer computation times. However, due to the accumulation of round-off errors, the relative error increases when an excessive number of nodes is used. Furthermore, the high-frequency case (\(\omega = 10\pi\)) requires more integration nodes than the low-frequency case (\(\omega = \pi\)) to achieve comparable accuracy.

\begin{table}[h]
	\centering
	\caption{Relative errors for the pear-shaped penetrable obstacle with $\omega=10\pi$}\label{table3}
	\begin{tabular}{@{}llllll@{}}
		\toprule
		$n$ & $E^{\mathrm{in}}(\phi_1)$ & $E^{\mathrm{in}}(\psi_1)$ & $E^{\infty}(\phi_2)$ & $E^{\infty}(\psi_2)$ & time \\
		\midrule
		24 & 0.0088& 0.0242 & 0.5239 & 2.0557 & 0.012s \\
		32 & 1.3471e-04& 4.6395e-04 & 0.0320 & 0.1117 & 0.028s \\
		48 & 8.5545e-08& 2.8629e-07 & 1.08419e-04 & 3.9380e-04 & 0.043s \\
		64 & 1.1347e-13 & 4.4910e-13 & 1.2301e-10 & 2.2972e-09 & 0.044s \\
		128 &  4.2999e-14 & 8.2160e-14 & 8.3902e-13 &  1.4661e-12 & 0.104s \\
		256 & 3.3419e-13 &  4.1357e-13 & 4.2040e-12 & 6.7444e-12 & 0.300s \\
		512 & 9.8709e-13 & 7.7154e-13 & 1.4891e-11 & 2.3863e-11 & 1.300s\\
		\bottomrule
	\end{tabular}
\end{table}

\section{The inverse scattering problem}
In this section, we employ three indicators to reconstruct both the location and shape of the penetrable obstacle from far-field data. We also analyze the decay behavior of these indicators as sampling points move away from the obstacle boundary and establish corresponding stability estimates.

The compressional and shear far-field patterns of the scattered field 
\(\pmb{v}\) are denoted by $v_{pp}^{\infty}(\hat{\pmb{x}},\pmb{d})$ and $v_{ps}^{\infty}(\hat{\pmb{x}},\pmb{d})$ for an incident P-wave $ \pmb{u}_p $, and by $v_{sp}^{\infty}(\hat{\pmb{x}},\pmb{d})$ and $v_{ss}^{\infty}(\hat{\pmb{x}},\pmb{d})$ for an incident S-wave $\pmb{u}_s$, with $\hat{\pmb{x}}$ and $\pmb{d}$ being the observation and incident directions, respectively.

For any sampling point \(\pmb{z}\in\mathbb{R}^2\), define the test functions
\[
\Xi_{\pmb{z}}^{p}(\boldsymbol{\eta})
= {\rm e}^{-\mathrm{i}\kappa_{p}\pmb{z}  \cdot  \boldsymbol{\eta}},
\quad
\Xi_{\pmb{z}}^{s}(\boldsymbol{\eta})
= {\rm e}^{-\mathrm{i}\kappa_{s}\pmb{z}  \cdot  \boldsymbol{\eta}},
\quad
\boldsymbol{\eta}\in\mathbb{S}
\]
with \(\Xi_{\pmb{z}}^{p}, \Xi_{\pmb{z}}^{s} \in L^2(\mathbb{S})\). We then formulate the following three inverse elastic scattering problems

\begin{enumerate}
	\item \(\pmb{I}_F\) (Full-aperture problem): Given the full far-field data
	\[
	\left\{ v_{pp}^\infty(\hat{\pmb{x}},\pmb{d}),\ v_{ps}^\infty(\hat{\pmb{x}},\pmb{d}),v_{sp}^\infty(\hat{\pmb{x}},\pmb{d}),\ v_{ss}^\infty(\hat{\pmb{x}},\pmb{d}) \right\}
	\]
	for all \(\hat{\pmb{x}},\pmb{d} \in \mathbb{S}\), reconstruct the location and shape of the medium \(D\).
	\item \(\pmb{I}_P\) (P-wave problem): Given only the compressional far-field data
	\[
	\left\{ v_{pp}^\infty(\hat{\pmb{x}},\pmb{d}) \right\}
	\]
	for all \(\hat{\pmb{x}},\pmb{d} \in \mathbb{S}\), reconstruct the location and shape of \(D\).
	
	\item \(\pmb{I}_S\) (S-wave problem): Given only the shear far-field data
	\[
	\left\{ v_{ss}^\infty(\hat{\pmb{x}},\pmb{d}) \right\}
	\]
	for all \(\hat{\pmb{x}},\pmb{d} \in \mathbb{S}\), reconstruct the location and shape of \(D\).
\end{enumerate}
To address these inverse problems, we introduce the following indicators
\begin{align*}
	\pmb{I}_F(\pmb{z}) &= \left| \int_{\mathbb{S}} \begin{pmatrix} \Xi_{\pmb{z}}^p(\pmb{d}) \\ \Xi_{\pmb{z}}^s(\pmb{d}) \end{pmatrix}^{\top} 
	\int_{\mathbb{S}} 
	\begin{pmatrix}
		v_{pp}^{\infty}(\hat{\pmb{x}},\pmb{d}) & v_{ps}^{\infty}(\hat{\pmb{x}},\pmb{d}) \\
		v_{sp}^{\infty}(\hat{\pmb{x}},\pmb{d}) & v_{ss}^{\infty}(\hat{\pmb{x}},\pmb{d})
	\end{pmatrix}
	\begin{pmatrix}
		\overline{\Xi_{\pmb{z}}^p(\hat{\pmb{x}})} \\[2pt]
		\overline{\Xi_{\pmb{z}}^s(\hat{\pmb{x}})}
	\end{pmatrix}
	\mathrm{d}s(\hat{\pmb{x}}) \mathrm{d}s(\pmb{d}) \right|, 
\end{align*}
\begin{align*}
	\pmb{I}_P(\pmb{z}) &= \left| \int_{\mathbb{S}} \Xi_{\pmb{z}}^p(\pmb{d}) 
	\int_{\mathbb{S}} v_{pp}^{\infty}(\hat{\pmb{x}},\pmb{d}) 
	\overline{\Xi_{\pmb{z}}^p(\hat{\pmb{x}})} \mathrm{d}s(\hat{\pmb{x}}) \mathrm{d}s(\pmb{d}) \right|, \\
	\pmb{I}_S(\pmb{z}) &= \left| \int_{\mathbb{S}} \Xi_{\pmb{z}}^s(\pmb{d}) 
	\int_{\mathbb{S}} v_{ss}^{\infty}(\hat{\pmb{x}},\pmb{d}) 
	\overline{\Xi_{\pmb{z}}^s(\hat{\pmb{x}})} \mathrm{d}s(\hat{\pmb{x}}) \mathrm{d}s(\pmb{d}) \right|.
\end{align*}
The analysis of the behavior of these indicators for sampling points away from the obstacle boundary requires the following auxiliary functions, defined for each \(\pmb{z} \in \mathbb{R}^2\):

\[
\varPsi_{mn}(\pmb{z},\pmb{d}) := \int_{\mathbb{S}} v_{mn}^{\infty}(\hat{\pmb{x}},\pmb{d}) 
\overline{\Xi_{\pmb{z}}^{n}(\hat{\pmb{x}})} \mathrm{d}s(\hat{\pmb{x}}), \quad m,n \in \{p,s\},
\]
For each fixed sampling point $\pmb{z}$, the function \(\varPsi_{mn}(\pmb{z},\cdot)\in L^{2}(\mathbb{S})\). The indicators \(\pmb{I}_F\), \(\pmb{I}_P\), and \(\pmb{I}_S\) can be expressed in terms of
\[
\int_{\mathbb{S}} \varPsi_{mn}(\pmb{z},\pmb{d}) \Xi_{\pmb{z}}^{m}(\pmb{d}) \mathrm{d}s(\pmb{d}), \quad m,n \in \{p,s\}.
\]

\begin{lemma}[{\cite[Lemma 2.7]{LXD}}]
	\label{lemma}
	In two dimensions, for any \(\pmb{p} \in \mathbb{R}^2\), the following identities hold
	\begin{equation*}
		\begin{aligned}
			\int_{\mathbb{S}}   {\rm e}^{-\mathrm{i}\kappa\hat{\pmb{x}}\cdot\pmb{p}} 
			\mathrm{d}s(\hat{\pmb{x}})
			&=2\pi J_{0}  \bigl(\kappa\lvert\pmb{p}\rvert\bigr),\\
			\int_{\mathbb{S}}\hat{\pmb{x}}
			{\rm e}^{-\mathrm{i}\kappa \hat{\pmb{x}}\cdot\pmb{p}} 
			\mathrm{d}s(\hat{\pmb{x}})
			&=
			\begin{cases}
				\pmb{0}, & \pmb{p}=\pmb{0},\\
				\displaystyle \frac{2\pi}{\mathrm{i}} 
				\hat{\pmb{p}} J_{1}  \bigl(\kappa\lvert\pmb{p}\rvert\bigr),
				& \pmb{p}\neq\pmb{0},
			\end{cases}
		\end{aligned}
	\end{equation*}
	where \(\hat{\pmb{p}} = \pmb{p} / \lvert \pmb{p} \rvert\).
\end{lemma}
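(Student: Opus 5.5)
The plan is to reduce both identities to the Jacobi--Anger expansion of a plane wave in two dimensions. First I will write $\pmb{p}=\lvert\pmb{p}\rvert(\cos\alpha,\sin\alpha)^\top$ and $\hat{\pmb{x}}=(\cos t,\sin t)^\top$, so that $\hat{\pmb{x}}\cdot\pmb{p}=\lvert\pmb{p}\rvert\cos(t-\alpha)$. The Jacobi--Anger expansion then gives
\[
{\rm e}^{-\mathrm{i}\kappa\lvert\pmb{p}\rvert\cos(t-\alpha)}=\sum_{n\in\mathbb{Z}}(-\mathrm{i})^nJ_n(\kappa\lvert\pmb{p}\rvert){\rm e}^{\mathrm{i}n(t-\alpha)},
\]
and this series converges uniformly in $t$, so I may integrate it term by term over $[0,2\pi]$.

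For the first identity, only the $n=0$ term survives integration, which gives $2\pi J_0(\kappa\lvert\pmb{p}\rvert)$ directly. If $\pmb{p}=\pmb{0}$, the integrand is $1$, and the formula remains consistent because $J_0(0)=1$.

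For the second identity, I will treat the case $\pmb{p}=\pmb{0}$ first: the integrand is just $\hat{\pmb{x}}$, whose integral over $\mathbb{S}$ vanishes by symmetry. For $\pmb{p}\neq\pmb{0}$, I will write $\hat{\pmb{x}}=\cos(t-\alpha)\hat{\pmb{p}}+\sin(t-\alpha)\hat{\pmb{p}}^{\perp}$ and shift the variable $t\mapsto t+\alpha$. The $\hat{\pmb{p}}^\perp$-component is then $\int_0^{2\pi}\sin s\,{\rm e}^{-\mathrm{i}\kappa\lvert\pmb{p}\rvert\cos s}\,ds$, which vanishes because the integrand is odd under $s\mapsto -s$. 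The $\hat{\pmb{p}}$-component is $\int_0^{2\pi}\cos s\,{\rm e}^{-\mathrm{i}\kappa\lvert\pmb{p}\rvert\cos s}\,ds$. Writing $\cos s=({\rm e}^{\mathrm{i}s}+{\rm e}^{-\mathrm{i}s})/2$ and inserting the expansion, only the $n=\pm1$ terms survive, which gives
\[
\pi\bigl[(-\mathrm{i})^{-1}J_{-1}+(-\mathrm{i})J_1\bigr]=\pi(\mathrm{i}J_{-1}-\mathrm{i}J_1).
\]
Using $J_{-1}=-J_1$, this equals $-2\pi\mathrm{i}J_1(\kappa\lvert\pmb{p}\rvert)=\frac{2\pi}{\mathrm{i}}J_1(\kappa\lvert\pmb{p}\rvert)$. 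An alternative route is to differentiate the first identity with respect to $\pmb{p}$ and use $J_0'=-J_1$. That gives $-\mathrm{i}\kappa\int\hat{\pmb{x}}{\rm e}^{\cdots}=-2\pi\kappa J_1\hat{\pmb{p}}$, which yields the same result and serves as a cross-check.

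The only delicate point is sign and phase bookkeeping with $(-\mathrm{i})^n$, together with the relation $J_{-1}=-J_1$. I will verify it through the derivative cross-check described above.
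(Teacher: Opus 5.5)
Your proof is correct. The expansion $\mathrm{e}^{-\mathrm{i}z\cos\theta}=\sum_{n\in\mathbb{Z}}(-\mathrm{i})^nJ_n(z)\mathrm{e}^{\mathrm{i}n\theta}$ has the right phase. The decomposition $\hat{\pmb{x}}=\cos(t-\alpha)\hat{\pmb{p}}+\sin(t-\alpha)\hat{\pmb{p}}^{\perp}$ is right. The surviving $n=\pm1$ terms give $\pi(\mathrm{i}J_{-1}-\mathrm{i}J_1)=-2\pi\mathrm{i}J_1=\frac{2\pi}{\mathrm{i}}J_1$, as claimed.

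The paper gives no proof of this lemma; it only quotes it from \cite{LXD}, so there is no in-paper argument to compare against. Your gradient cross-check (differentiate the first identity in $\pmb{p}$, use $J_0'=-J_1$, and divide by $\kappa\neq0$) is the shortest self-contained route to the second identity once the first is known.
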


\begin{theorem}\label{decay}
	For any incidence direction \(\pmb{d} \in \mathbb{S}\), the auxiliary function
	\[
	\varPsi_{pp}(\pmb{z},\pmb{d})
	= \int_{\mathbb{S}} v_{pp}^{\infty}(\hat{\pmb{x}},\pmb{d})  \overline{\Xi_{\pmb{z}}^{p}(\hat{\pmb{x}})}  \mathrm{d}s(\hat{\pmb{x}})
	\]
	decays as the sampling point \(\pmb{z} \in \mathbb{R}^2\) moves away from the obstacle boundary \(\partial D\).
\end{theorem}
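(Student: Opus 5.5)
The plan is to express $v_{pp}^{\infty}(\cdot,\pmb d)$ as a boundary integral over $\partial D$ against a plane-wave kernel. The $\hat{\pmb x}$-integration in $\varPsi_{pp}$ then collapses, through Lemma~\ref{lemma}, into Bessel functions evaluated at $\kappa_p|\pmb y-\pmb z|$ with $\pmb y\in\partial D$. Their decay for large arguments gives the claim.

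First, for the incident P-wave $\pmb u_p$, I will use \eqref{far} to write $v_{pp}^\infty(\hat{\pmb x},\pmb d)=\mathrm{i}\kappa_p\phi_2^\infty(\hat{\pmb x},\pmb d)$. Here $\phi_2$ is the exterior compressional potential. Using the single-layer representation \eqref{single2} with density $g_3=g_3(\cdot,\pmb d)\in C^{1,\alpha}(\partial D)$, its far-field pattern is $S^\infty_{\kappa_p}[g_3]$. Alternatively, to avoid assuming solvability of the integral system, I can use Green's representation formula for the radiating solution $\phi_2$ in $\mathbb R^2\setminus\overline D$. Its far field is
\[
\phi_2^\infty(\hat{\pmb x})=\gamma_{\kappa_p}\int_{\partial D}\Bigl(\phi_2(\pmb y)\partial_{\pmb\nu(\pmb y)}{\rm e}^{-\mathrm{i}\kappa_p\hat{\pmb x}\cdot\pmb y}-\partial_{\pmb\nu}\phi_2(\pmb y){\rm e}^{-\mathrm{i}\kappa_p\hat{\pmb x}\cdot\pmb y}\Bigr)ds(\pmb y).
\]
Because $\partial_{\pmb\nu(\pmb y)}{\rm e}^{-\mathrm{i}\kappa_p\hat{\pmb x}\cdot\pmb y}=-\mathrm{i}\kappa_p\,\hat{\pmb x}\cdot\pmb\nu(\pmb y)\,{\rm e}^{-\mathrm{i}\kappa_p\hat{\pmb x}\cdot\pmb y}$, this form produces exactly the two integrals covered by Lemma~\ref{lemma}.

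Second, I will insert this into $\varPsi_{pp}$, where $\overline{\Xi^p_{\pmb z}(\hat{\pmb x})}={\rm e}^{\mathrm{i}\kappa_p\pmb z\cdot\hat{\pmb x}}$. Applying Fubini on the compact sets $\mathbb S\times\partial D$ gives the kernel $\int_{\mathbb S}{\rm e}^{-\mathrm{i}\kappa_p\hat{\pmb x}\cdot(\pmb y-\pmb z)}ds(\hat{\pmb x})=2\pi J_0(\kappa_p|\pmb y-\pmb z|)$. The second identity of Lemma~\ref{lemma} gives $\int_{\mathbb S}\hat{\pmb x}\,{\rm e}^{-\mathrm{i}\kappa_p\hat{\pmb x}\cdot(\pmb y-\pmb z)}ds=\frac{2\pi}{\mathrm i}\widehat{(\pmb y-\pmb z)}J_1(\kappa_p|\pmb y-\pmb z|)$. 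Hence
\[
\varPsi_{pp}(\pmb z,\pmb d)=2\pi\mathrm{i}\kappa_p\gamma_{\kappa_p}\int_{\partial D}\Bigl(-\kappa_p\phi_2(\pmb y)\,\pmb\nu(\pmb y)\cdot\widehat{(\pmb y-\pmb z)}J_1(\kappa_p|\pmb y-\pmb z|)-\partial_{\pmb\nu}\phi_2(\pmb y)J_0(\kappa_p|\pmb y-\pmb z|)\Bigr)ds(\pmb y).
\]
In the single-layer version this reduces to $2\pi\mathrm{i}\kappa_p\gamma_{\kappa_p}\int_{\partial D}J_0(\kappa_p|\pmb y-\pmb z|)g_3(\pmb y)ds(\pmb y)$.

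Third, I will use the standard Bessel asymptotics $J_n(t)=\sqrt{2/(\pi t)}\cos(t-n\pi/2-\pi/4)+\mathcal O(t^{-3/2})$, $n=0,1$, which give $|J_n(t)|\le C\,t^{-1/2}$ for $t\ge1$. Let $\operatorname{dist}(\pmb z,\partial D)\ge \kappa_p^{-1}$. Then
\[
|\varPsi_{pp}(\pmb z,\pmb d)|\le C\,\bigl(\kappa_p\operatorname{dist}(\pmb z,\partial D)\bigr)^{-1/2}\bigl(\kappa_p\|\phi_2\|_{L^1(\partial D)}+\|\partial_{\pmb\nu}\phi_2\|_{L^1(\partial D)}\bigr),
\]
with the constant uniform in $\pmb d$. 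The Cauchy data depend continuously on $\pmb d$ through the well-posed forward problem (Theorem~\ref{uniqueness of coupled system} and the Fredholm alternative), so they are bounded uniformly over the compact set $\mathbb S$. Thus $\varPsi_{pp}(\pmb z,\pmb d)=\mathcal O(\operatorname{dist}(\pmb z,\partial D)^{-1/2})$ as $\pmb z$ moves away from $\partial D$. When $\pmb z$ approaches $\partial D$, the kernels instead have $J_0(0)=1$, so the magnitude is not suppressed there.

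The main obstacle is making the representation rigorous with quantities controlled independently of $\pmb z$, namely boundedness of the boundary data of $\phi_2$ uniformly in $\pmb d$. I will obtain this from the uniqueness theorem combined with the Fredholm/compactness structure of the transmission problem rather than from the integral equation system, because the latter needs the non-eigenvalue assumption. With that in place, the decay itself is simply the $t^{-1/2}$ envelope of $J_0$ and $J_1$.
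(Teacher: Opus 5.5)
Your proposal is correct and follows essentially the same route as the paper: the paper also writes $v_{pp}^{\infty}=\mathrm{i}\kappa_p\phi_2^{\infty}$, inserts Green's far-field representation of $\phi_2^{\infty}$ over $\partial D$, interchanges the integrals, and uses Lemma~\ref{lemma} to obtain exactly your combination of $J_0(\kappa_p|\pmb y-\pmb z|)$ and $J_1(\kappa_p|\pmb y-\pmb z|)$. The only difference is that you turn the paper's qualitative remark about Bessel-type decay into an explicit $\mathcal O\bigl(\operatorname{dist}(\pmb z,\partial D)^{-1/2}\bigr)$ bound via $|J_n(t)|\le Ct^{-1/2}$, which is a valid sharpening; the uniformity in $\pmb d$ that you add is not needed, since the statement fixes $\pmb d$.
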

\begin{proof}
	
	Based on the far-field pattern relations \eqref{far} and the integral representation
	\[
	\phi_{2}^{\infty}(\pmb{\hat{x}},\pmb{d})
	= \gamma_{\kappa_{p}}
	\int_{\partial D}  \Bigl\{\phi_{2}(\pmb{y},\pmb{d}) \partial_{\pmb{\nu}(\pmb{y})}
	{\rm e}^{-\mathrm{i} \kappa_{p} \pmb{\hat{x}}\cdot\pmb{y}}
	- (\partial_{\pmb{\nu}}\phi_{2})(\pmb{y},\pmb{d})   {\rm e}^{-\mathrm{i} \kappa_{p} \pmb{\hat{x}}\cdot\pmb{y}}
	\Bigr\} ds(\pmb{y}),
	\]
	we deduce that
	\begin{align*}
		v_{pp}^{\infty}(\pmb{\hat{x}},\pmb{d})
		&= \mathrm{i} \kappa_{p} \phi_{2,p}^{\infty}(\pmb{\hat{x}},\pmb{d})
		= \mathrm{i} \kappa_{p} \gamma_{\kappa_{p}}
		\int_{\partial D}  
		\Bigl\{\phi_{2,p}(\pmb{y},\pmb{d}) \partial_{\pmb{\nu}(\pmb y)}
		{\rm e}^{-\mathrm{i} \kappa_{p} \pmb{\hat{x}}\cdot\pmb{y}}- (\partial_{\pmb{\nu}}\phi_{2,p})(\pmb{y},\pmb{d})   {\rm e}^{-\mathrm{i} \kappa_{p} \pmb{\hat{x}}\cdot\pmb{y}}
		\Bigr\} ds(\pmb{y}),
	\end{align*}
	where \(\phi_{2,p}^{\infty}\) denotes the far-field pattern of the scalar potential \(\phi_{2}\)  under compressional plane wave incidence. Consequently, the auxiliary function becomes
	\\
	\begin{align*}
		\varPsi_{pp}(\pmb{z},\pmb{d})&=\int_{\mathbb{S}}v_{pp}^{\infty}(\pmb{\hat{x}},\pmb{d})\overline{\Xi_{\pmb{z}}^p(\pmb{\hat{x}})}\mathrm{d}s(\pmb{\hat{x}})\\
		&=\mathrm{i}\kappa_{p}\gamma_{\kappa_{p}}\int_{\mathbb{S}}\int_{\partial D}\Big\{\phi_{2,p}(\pmb{y},\pmb{d})\partial_{\pmb{\nu}(\pmb{y})}\mathrm{e}^{\mathrm{-i}\kappa_{p}\pmb{\hat{x}}\cdot (\pmb{y}-\pmb{z})} -(\partial_{\pmb{\nu}}\phi_{2,p})(\pmb{y},\pmb{d})\mathrm{e}^{\mathrm{-i}\kappa_{p}\pmb{\hat{x}}\cdot (\pmb{y}-\pmb{z})}\Big\}\mathrm{d}s(\pmb{y})\mathrm{d}s(\pmb{\hat{x}})\\
		&=\mathrm{i}\kappa_{p}\gamma_{\kappa_{p}}\int_{\partial D}\Big\{-\mathrm{i}\kappa_p\phi_{2,p}(\pmb{y},\pmb{d})\pmb{\nu}(\pmb{y})\cdot\int_{\mathbb{S}}\pmb{\hat{x}}\mathrm{e}^{\mathrm{-i}\kappa_{p}\pmb{\hat{x}}\cdot (\pmb{y}-\pmb{z})}\mathrm{d}s(\pmb{\hat{x}})\\
		&\qquad\qquad\qquad\qquad\qquad-(\partial_{\pmb{\nu}}\phi_{2,p})(\pmb{y},\pmb{d})\int_{\mathbb{S}}\mathrm{e}^{\mathrm{-i}\kappa_{p}\pmb{\hat{x}}\cdot (\pmb{y}-\pmb{z})}\mathrm{d}s(\pmb{\hat{x}})\Big\}\mathrm{d}s(\pmb{y}).
	\end{align*}
	Applying Lemma \ref{lemma} yields
	\begin{align*}
		\varPsi_{pp}(\pmb{z},\pmb{d})
		&= \mathrm{i} \kappa_{p} \gamma_{\kappa_{p}}
		\int_{\partial D}
		\Bigl\{
		-\mathrm{i} \kappa_{p} \frac{2\pi}{\mathrm{i}} 
		\phi_{2,p}(\pmb{y},\pmb{d}) 
		\pmb{\nu}(\pmb{y})  \cdot  \frac{\pmb{y}-\pmb{z}}{\lvert\pmb{y}-\pmb{z}\rvert} 
		J_{1}(\kappa_{p}\lvert\pmb{y}-\pmb{z}\rvert)\\
		&\qquad\qquad\qquad\qquad\qquad\qquad-2\pi (\partial_{\pmb{\nu}}\phi_{2,p})(\pmb{y},\pmb{d}) 
		J_{0}(\kappa_{p}\lvert\pmb{y}-\pmb{z}\rvert)
		\Bigr\} ds(\pmb{y}).
	\end{align*}
	Therefore, \(\varPsi_{pp}(\pmb{z},\pmb{d})\) is expressed as a superposition of Bessel functions \(J_1\) and \(J_0\), implying that it decays asymptotically at the rate of a Bessel function as the sampling point \(\pmb{z}\) moves away from \(\partial D\).
\end{proof}

As established in Theorem \ref{decay}, the indicator $\pmb{I}_P$
decays as the sampling point moves away from the obstacle boundary. This behavior is confirmed by the numerical experiments in Section \ref{numex}. Below, we present a stability result for the indicator $\pmb{I}_P$, which ensures the reliability of our inversion method even in the presence of measurement errors.

\begin{theorem}\label{the}
	Let \(\pmb{I}_P\) and \(\pmb{I}_{P}^\delta\) be the indicators constructed from the exact and noisy far-field data \(v_{pp}^{\infty}\) and \(v_{pp,\delta}^{\infty}\), respectively. Defining the auxiliary functions
	$$
	\varPsi_{pp}(\pmb{z},\pmb{d})=\int_{\mathbb{S}}v_{pp}^{\infty}(\pmb{\hat{x}},\pmb{d})\overline{\Xi_{\pmb{z}}^p(\pmb{\hat{x}})}\mathrm{d}s(\pmb{\hat{x}}),\quad \varPsi^{\delta}_{pp}(\pmb{z},\pmb{d})=\int_{\mathbb{S}}v_{pp,\delta}^{\infty}(\pmb{\hat{x}},\pmb{d})\overline{\Xi_{\pmb{z}}^p(\pmb{\hat{x}})}\mathrm{d}s(\pmb{\hat{x}}),
	$$
	for $\pmb{\hat{x}},\pmb{d}\in\mathbb{S}$.
	Then there exists a constant \(C > 0\), independent of the sampling point \(\pmb{z} \in \mathbb{R}^2\), such that
	\[
	\bigl| \pmb{I}_P(\pmb{z}) - \pmb{I}_P^\delta(\pmb{z}) \bigr|
	\le C  \bigl\| v_{pp}^{\infty} - v_{pp,\delta}^{\infty} \bigr\|_{L^2(\mathbb{S}\times\mathbb{S})}.
	\]
\end{theorem}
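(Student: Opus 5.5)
The plan is to combine the reverse triangle inequality with Cauchy--Schwarz, using only that the test functions $\Xi_{\pmb z}^p$ have modulus one on $\mathbb{S}$. This makes their $L^2$ norms independent of $\pmb z$. No information about the scattering problem is needed beyond $v_{pp}^\infty, v_{pp,\delta}^\infty\in L^2(\mathbb{S}\times\mathbb{S})$.

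First, I would write $\pmb{I}_P(\pmb z)=\bigl|\int_{\mathbb{S}}\Xi_{\pmb z}^p(\pmb d)\varPsi_{pp}(\pmb z,\pmb d)\,\mathrm{d}s(\pmb d)\bigr|$, and similarly for $\pmb{I}_P^\delta$ with $\varPsi_{pp}^\delta$. Since $\bigl||a|-|b|\bigr|\le|a-b|$, it follows that
\[
\bigl|\pmb{I}_P(\pmb z)-\pmb{I}_P^\delta(\pmb z)\bigr|\le\int_{\mathbb{S}}\bigl|\Xi_{\pmb z}^p(\pmb d)\bigr|\,\bigl|\varPsi_{pp}(\pmb z,\pmb d)-\varPsi_{pp}^\delta(\pmb z,\pmb d)\bigr|\,\mathrm{d}s(\pmb d).
\]
Because $\kappa_p$ is real, $|\Xi_{\pmb z}^p|=1$. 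Cauchy--Schwarz in $\pmb d$ then bounds the right-hand side by $\sqrt{2\pi}\,\|\varPsi_{pp}(\pmb z,\cdot)-\varPsi_{pp}^\delta(\pmb z,\cdot)\|_{L^2(\mathbb{S})}$.

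Second, for each fixed $\pmb d$ I would apply Cauchy--Schwarz in $\hat{\pmb x}$, again using $|\overline{\Xi_{\pmb z}^p(\hat{\pmb x})}|=1$:
\[
\bigl|\varPsi_{pp}(\pmb z,\pmb d)-\varPsi_{pp}^\delta(\pmb z,\pmb d)\bigr|\le\sqrt{2\pi}\,\Bigl(\int_{\mathbb{S}}\bigl|v_{pp}^\infty(\hat{\pmb x},\pmb d)-v_{pp,\delta}^\infty(\hat{\pmb x},\pmb d)\bigr|^2\mathrm{d}s(\hat{\pmb x})\Bigr)^{1/2}.
\]
Squaring this and integrating over $\pmb d$ gives $\|\varPsi_{pp}(\pmb z,\cdot)-\varPsi^\delta_{pp}(\pmb z,\cdot)\|_{L^2(\mathbb{S})}\le\sqrt{2\pi}\,\|v_{pp}^\infty-v_{pp,\delta}^\infty\|_{L^2(\mathbb{S}\times\mathbb{S})}$, by Fubini.

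Combining the two steps yields the claim with $C=2\pi$, which is clearly independent of $\pmb z$. There is essentially no obstacle here. The only points needing care are measurability, and integrability of the noisy data in both variables, so that Fubini applies. I would state this as the standing assumption $v_{pp,\delta}^\infty\in L^2(\mathbb{S}\times\mathbb{S})$, which is implicit in the right-hand side of the estimate. The same argument applies verbatim to $\pmb{I}_S$. For $\pmb{I}_F$ one sums the four blocks and gets a constant of the form $2\pi$ times a small numerical factor.
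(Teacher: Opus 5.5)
Your proof is correct and follows the same route as the paper: the reverse triangle inequality, then Cauchy--Schwarz, using that $|\Xi_{\pmb z}^p|=1$ on $\mathbb{S}$. You also make the constant explicit as $C=2\pi$, which the paper leaves unspecified.
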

\begin{proof}
	Applying the reverse triangle inequality and the Cauchy-Schwarz inequality, we derive
	\begin{align*}
		& \bigl| \pmb{I}_P(\pmb{z}) - \pmb{I}_P^\delta(\pmb{z}) \bigr| \\
		=& \left| 
		\left| \int_{\mathbb{S}} \Xi_{\pmb{z}}^p(\pmb{d}) 
		\varPsi_{pp}(\pmb{z},\pmb{d})  \mathrm{d}s(\pmb{d}) \right|
		- \left| \int_{\mathbb{S}} \Xi_{\pmb{z}}^p(\pmb{d}) 	\varPsi^{\delta}_{pp}(\pmb{z},\pmb{d})  \mathrm{d}s(\pmb{d}) \right|
		\right| \\
		\le& \left| \int_{\mathbb{S}} \Xi_{\pmb{z}}^p(\pmb{d}) \int_{\mathbb{S}} \left[ v_{pp}^{\infty}(\hat{\pmb{x}},\pmb{d}) - v_{pp,\delta}^{\infty}(\hat{\pmb{x}},\pmb{d}) \right] \overline{\Xi_{\pmb{z}}^p(\hat{\pmb{x}})}  \mathrm{d}s(\hat{\pmb{x}}) \mathrm{d}s(\pmb{d}) \right| \\
		\le& C  \left\| v_{pp}^{\infty} - v_{pp,\delta}^{\infty} \right\|_{L^2(\mathbb{S}\times\mathbb{S})}.
	\end{align*}
\end{proof}

From the preceding analysis, the auxiliary functions \(\varPsi_{ps}(\pmb{z},\pmb{d})\), \(\varPsi_{sp}(\pmb{z},\pmb{d})\), and \(\varPsi_{ss}(\pmb{z},\pmb{d})\) can also be expressed as superpositions of Bessel functions. Consequently, the indicators \(\pmb{I}_F\) and \(\pmb{I}_S\) exhibit analogous asymptotic decay properties. Moreover, the stability of \(\pmb{I}_F\) and \(\pmb{I}_S\) can be established by arguments analogous to those in Theorem \ref{the}.

In particular, we note that if the far-field patterns \(\phi_{2}^{\infty}\) and \(\psi_{2}^{\infty}\) are available, then the quantities \(v_{pp}^\infty\), \(v_{ps}^\infty\), \(v_{sp}^\infty\) and \(v_{ss}^\infty\) required by the indicators can be directly computed. Thus, our indicators are fundamentally constructed from these scalar far-field patterns.

\section{Numerical experiments}\label{numex}

To evaluate the effectiveness of the indicators, a series of numerical experiments are performed in this section. The far-field patterns \(v_{pp}^{\infty}(\hat{\pmb{x}}_i, \pmb{d}_j)\), \(v_{ps}^{\infty}(\hat{\pmb{x}}_i, \pmb{d}_j)\), \(v_{sp}^{\infty}(\hat{\pmb{x}}_i, \pmb{d}_j)\), and \(v_{ss}^{\infty}(\hat{\pmb{x}}_i, \pmb{d}_j)\) are computed for uniformly distributed observation directions \(\hat{\pmb{x}}_i = (\cos(2\pi i / N), \sin(2\pi i / N))^{\top}\) with \(i = 1, 2, \ldots, N\), and incident directions \(\pmb{d}_j = (\cos(2\pi j / M),\sin(2\pi j / M))^{\top}\) with \(j = 1, 2, \ldots, M\), where \(N = M = 144\). The resulting far-field data are stored in matrices \(\pmb{U}_{F} \in \mathbb{C}^{N \times M}\) for \(F \in \{pp, ps, sp, ss\}\).
\begin{figure}[!htpb]
	\centering
	\resizebox{0.85\textwidth}{!}{
		\begin{tikzpicture}[
			transform shape,
			label/.style={font=\normalsize, anchor=center, align=center},
			leftlabel/.style={font=\normalsize, anchor=east, align=right}
			]
			\node[label] at (-3,0) {Noise \\ Level};
			\node[label] at (-1,0) {Ground \\ Truth};
			\node[label] at (1.9,0) {$\pmb{I}_P$};
			\node[label] at (4.9,0) {$\pmb{I}_S$};
			\node[label] at (7.9,0) {$\pmb{I}_F$};
			\node[label] at (-3,-2) {0\%};
			\node[label] at (-3,-4.5) {30\%};
			\node[label] at (-1,-3.25) {\includegraphics[width=3.4cm, keepaspectratio]{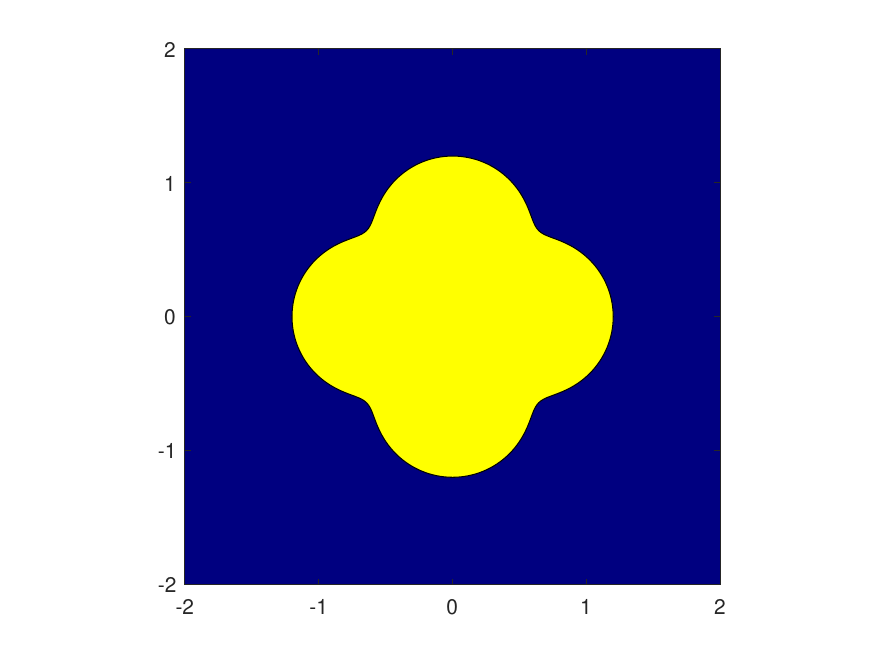}};
			\node[label] at (1.9,-2) {\includegraphics[width=3.4cm, keepaspectratio]{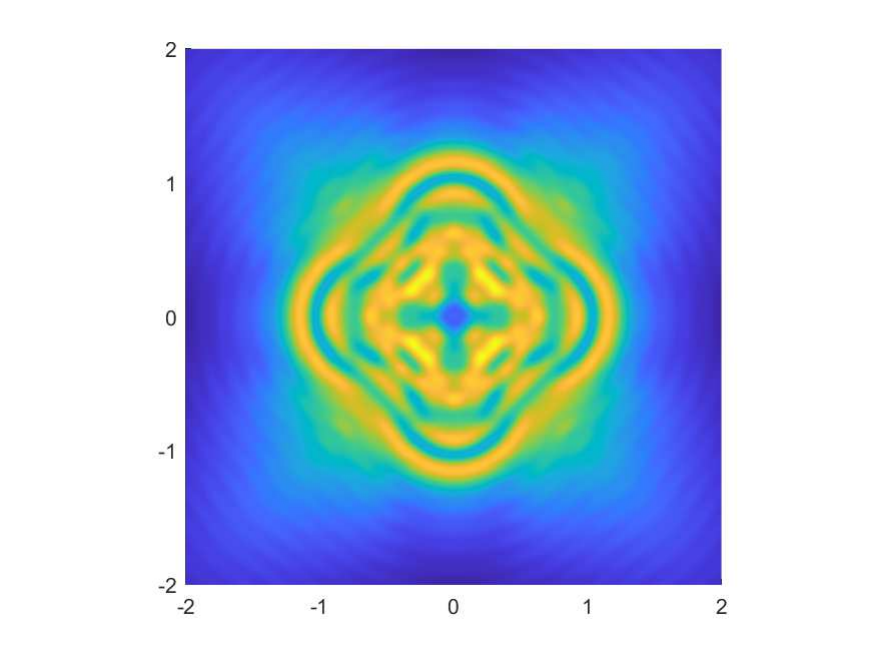}};
			\node[label] at (4.9,-2) {\includegraphics[width=3.4cm, keepaspectratio]{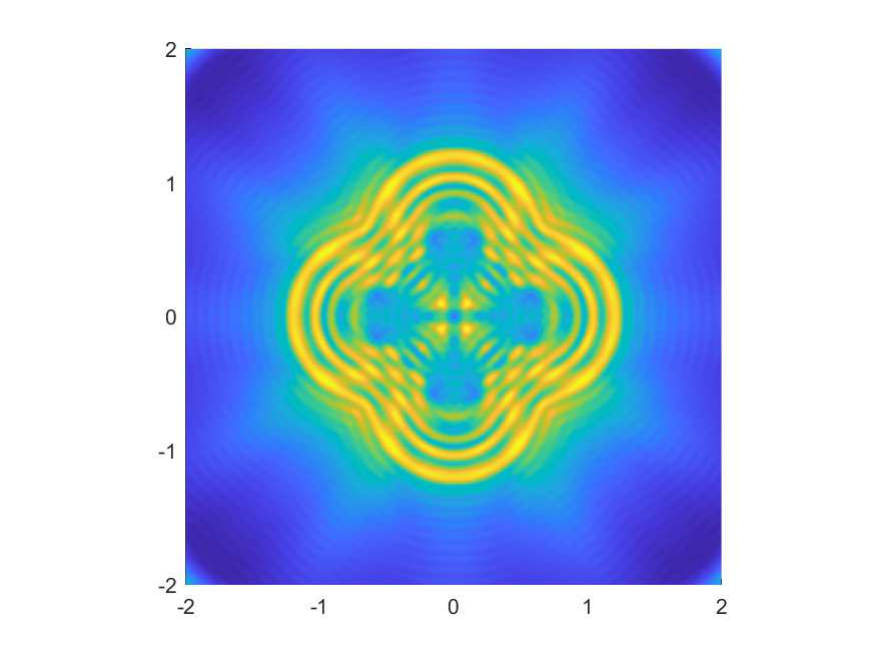}};
			\node[label] at (7.9,-2) {\includegraphics[width=3.4cm, keepaspectratio]{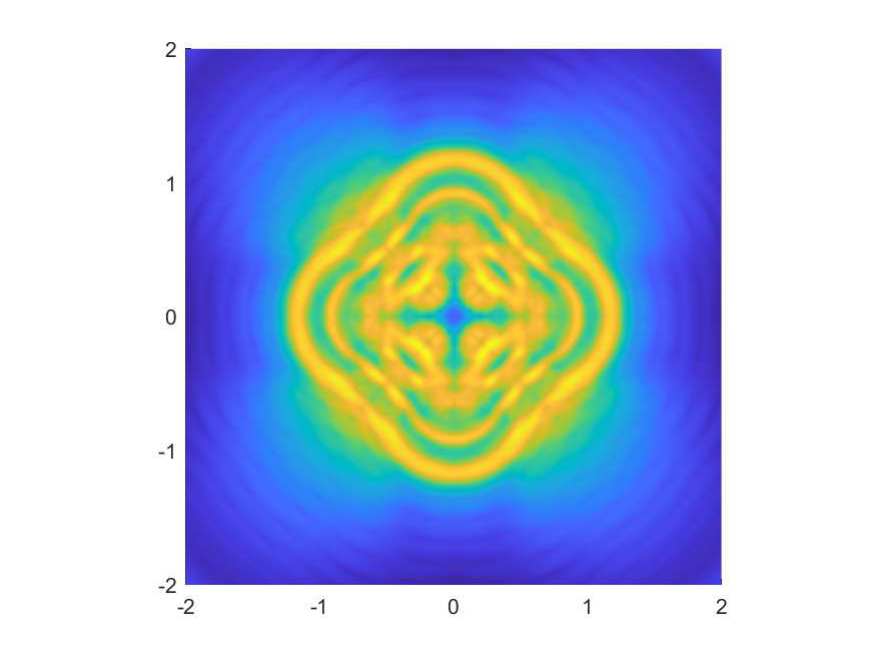}};
			
			\node[label] at (1.9,-4.5) {\includegraphics[width=3.4cm, keepaspectratio]{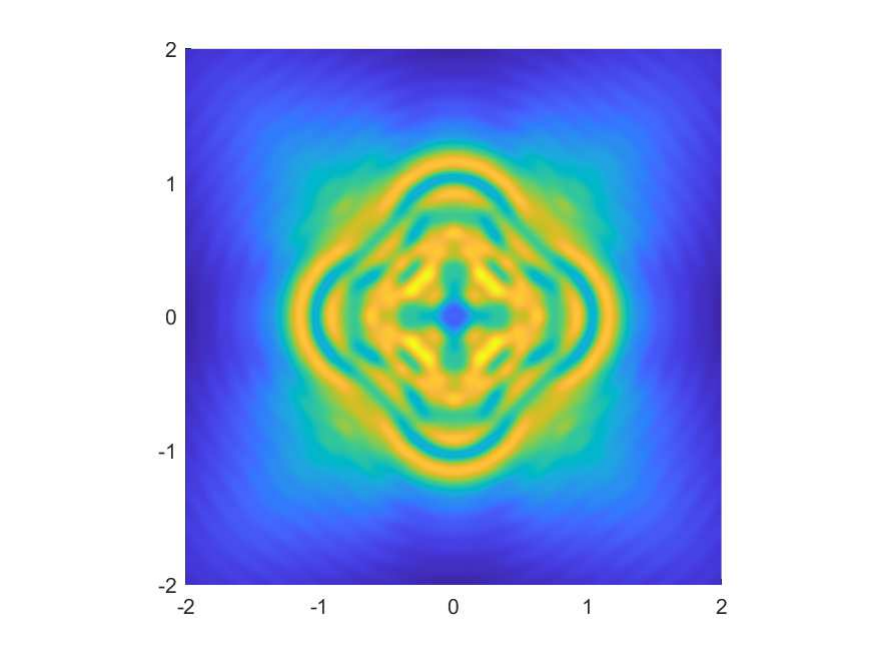}};
			\node[label] at (4.9,-4.5) {\includegraphics[width=3.4cm, keepaspectratio]{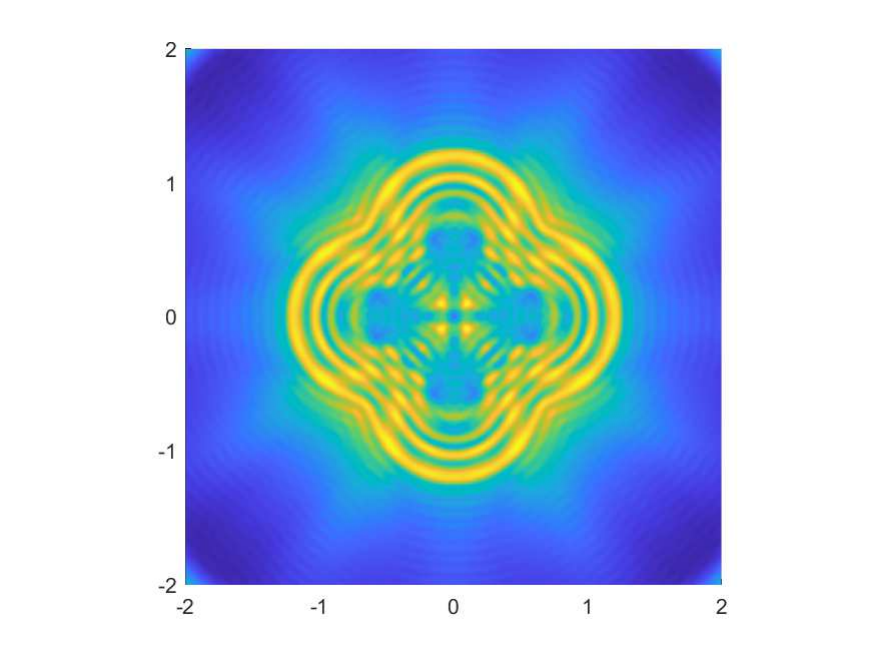}};
			\node[label] at (7.9,-4.5) {\includegraphics[width=3.4cm, keepaspectratio]{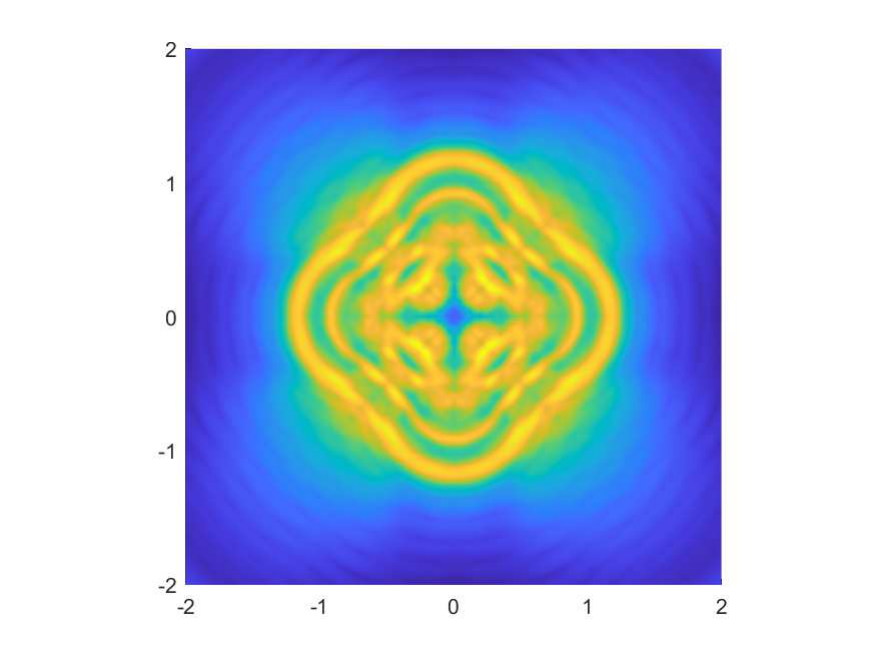}};
			
		\end{tikzpicture}
	}
	\caption{Reconstruction of flower-shaped penetrable obstacle using indicators $\pmb{I}_P$, $\pmb{I}_S$ and $\pmb{I}_F$ with  \(0\%\) and \(30\%\) noise level.}\label{figure1}
\end{figure}
\begin{figure}[!htbp]
	\centering
	\resizebox{0.85\textwidth}{!}{
		\begin{tikzpicture}[
			transform shape,
			label/.style={font=\normalsize, anchor=center, align=center},
			leftlabel/.style={font=\normalsize, anchor=east, align=right}
			]
			\node[label] at (-3,0) {Noise \\ Level};
			\node[label] at (-1,0) {Ground \\ Truth};
			\node[label] at (1.9,0) {$\pmb{I}_P$};
			\node[label] at (4.9,0) {$\pmb{I}_S$};
			\node[label] at (7.9,0) {$\pmb{I}_F$};
			
			\node[label] at (-3,-2) {0\%};
			\node[label] at (-3,-4.5) {30\%};
			\node[label] at (-1,-3.25) {\includegraphics[width=3.4cm]{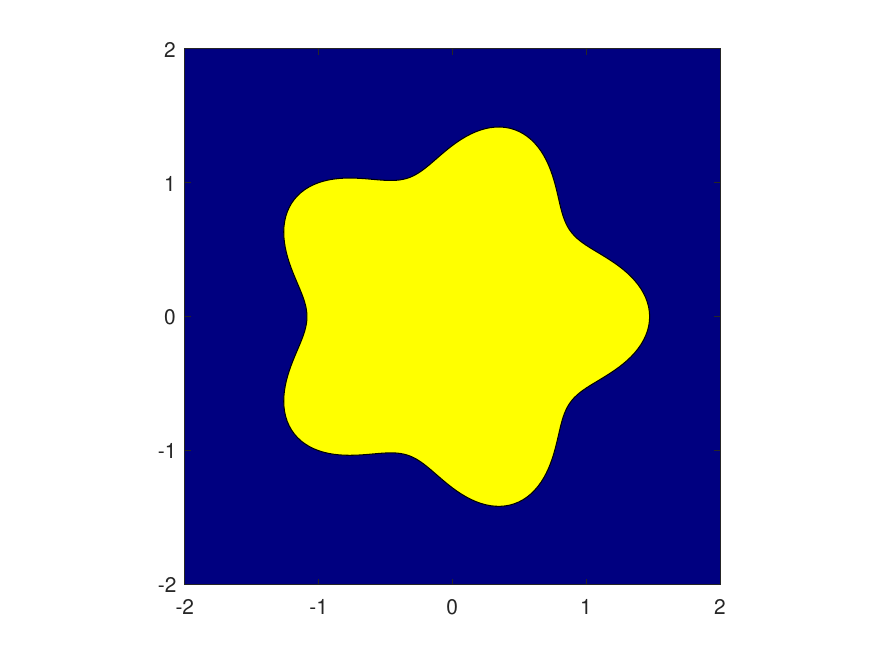}};
			\node[label] at (1.9,-2) {\includegraphics[width=3.4cm]{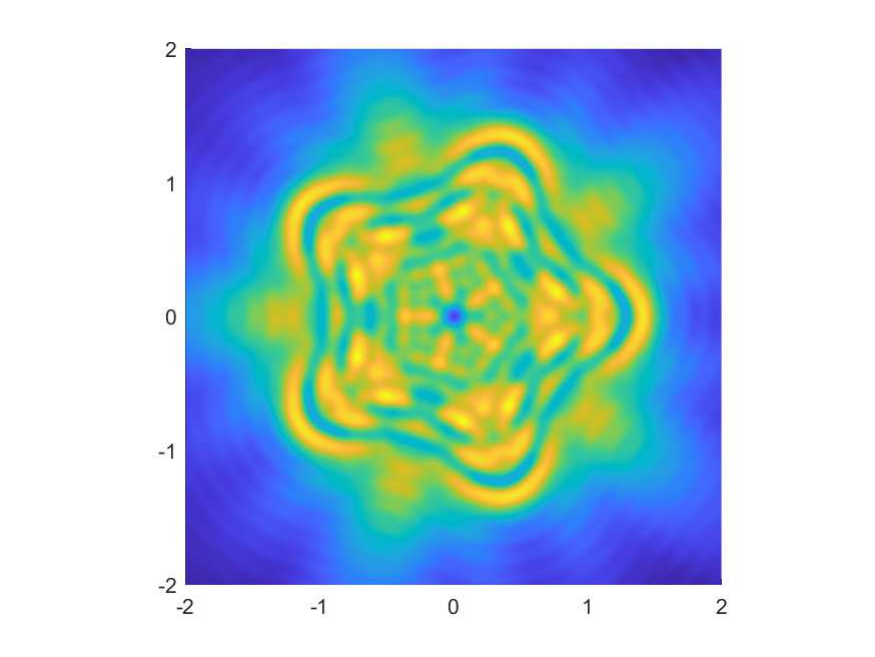}};
			\node[label] at (4.9,-2) {\includegraphics[width=3.4cm]{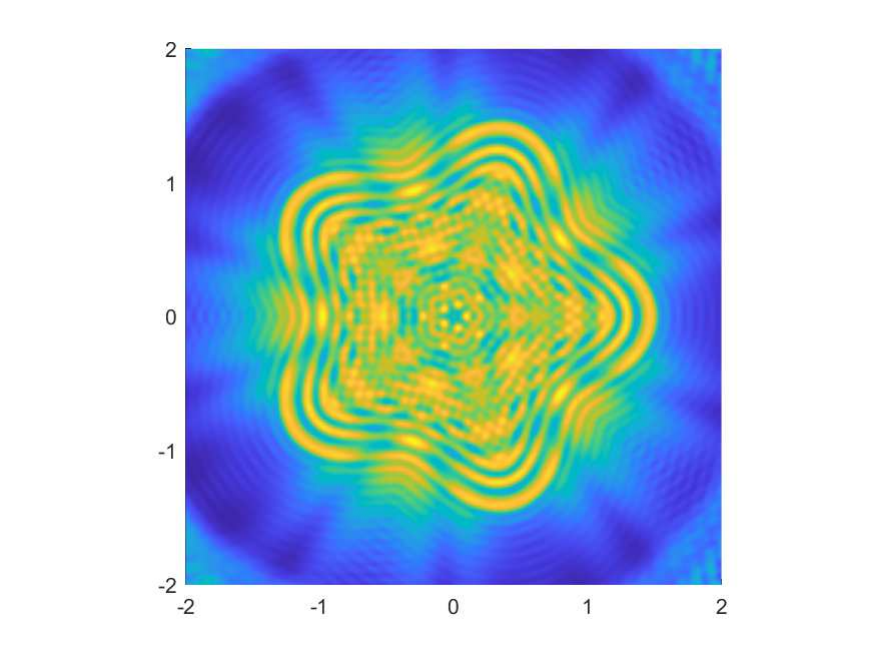}};
			\node[label] at (7.9,-2) {\includegraphics[width=3.4cm]{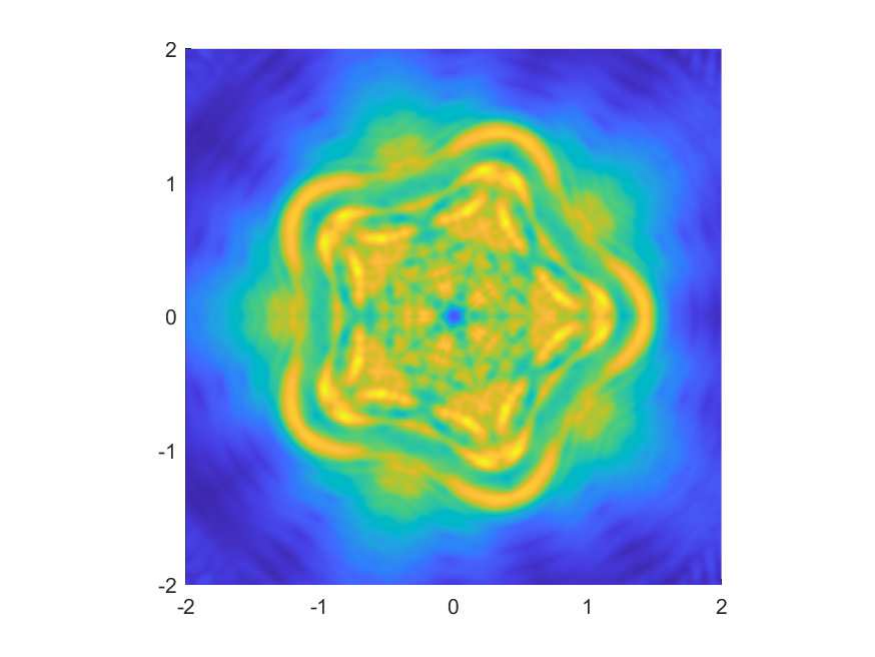}};
			\node[label] at (1.9,-4.5) {\includegraphics[width=3.4cm]{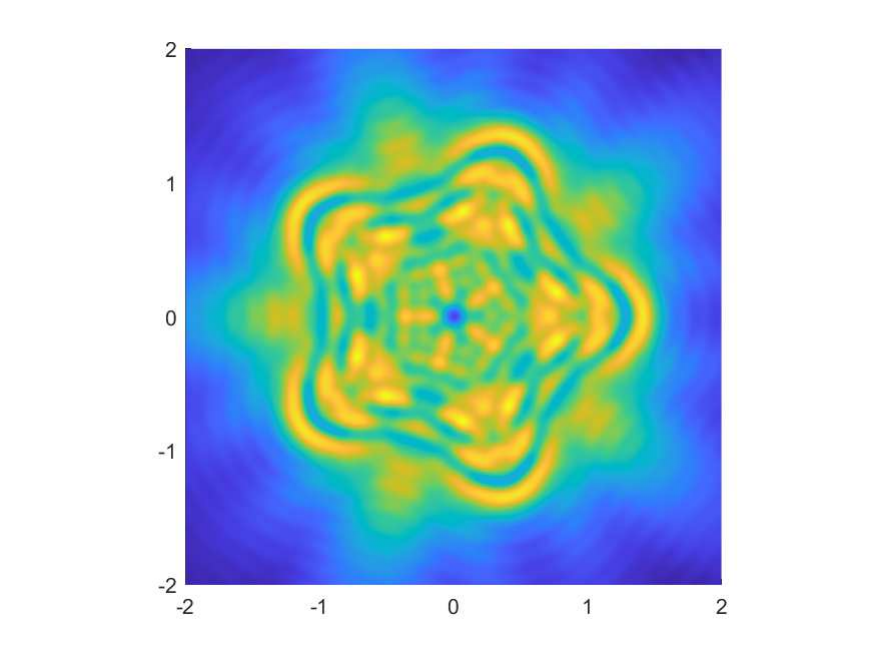}};
			\node[label] at (4.9,-4.5) {\includegraphics[width=3.4cm]{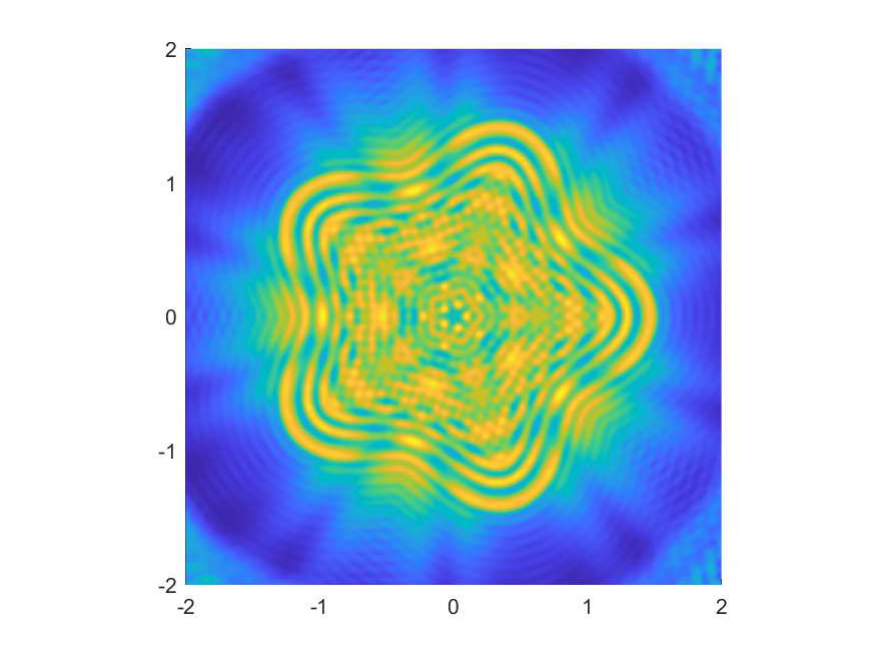}};
			\node[label] at (7.9,-4.5) {\includegraphics[width=3.4cm]{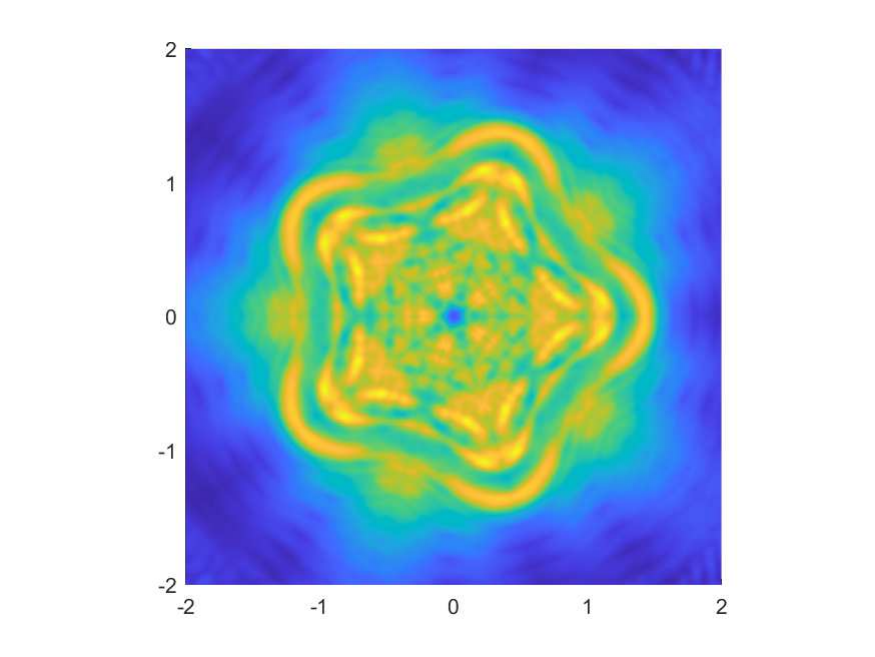}};
		\end{tikzpicture}
	} 
	\caption{Reconstruction of 5-leaf-shaped penetrable obstacle using indicators $\pmb{I}_P$, $\pmb{I}_S$ and $\pmb{I}_F$ with 0\% and 30\% noise level.}
	\label{figureleaf}
\end{figure}

To examine the stability of the indicators, we introduce a perturbation term to the computed far-field data, yielding the noisy data
\[
\pmb{U}_{F}^{\delta} = \pmb{U}_{F} + \delta \| \pmb{U}_{F} \| \frac{\pmb{R}_1 + \mathrm{i} \pmb{R}_2}{\| \pmb{R}_1 + \mathrm{i} \pmb{R}_2 \|},
\]
where \(\pmb{R}_1\) and \(\pmb{R}_2\) are \(N \times M\) matrices with normally distributed entries in \([-1, 1]\), and \(\delta > 0\) represents the relative noise level.

For the limited-aperture case, let \(\mathbb{S}_0 \subset \mathbb{S}\) be a non-empty subset of directions, and define the product set \(\mathbb{S}_1 \times \mathbb{S}_2 \in \left\{ \mathbb{S} \times \mathbb{S}_0,~ \mathbb{S}_0 \times \mathbb{S} \right\}\). The corresponding indicators are defined as:
\begin{align*}
	\pmb I_{F,L}(\pmb{z})
	&= \left| \int_{\mathbb{S}_1}
	\begin{pmatrix} \Xi_{\pmb{z}}^p(\pmb{d}) \\ \Xi_{\pmb{z}}^s(\pmb{d}) \end{pmatrix}^{\top}
	\int_{\mathbb{S}_2}
	\begin{pmatrix}
		v_{pp}^{\infty}(\hat{\pmb{x}},\pmb{d}) & v_{ps}^{\infty}(\hat{\pmb{x}},\pmb{d}) \\
		v_{sp}^{\infty}(\hat{\pmb{x}},\pmb{d}) & v_{ss}^{\infty}(\hat{\pmb{x}},\pmb{d})
	\end{pmatrix}
	\begin{pmatrix}
		\overline{\Xi_{\pmb{z}}^p(\hat{\pmb{x}})} \\
		\overline{\Xi_{\pmb{z}}^s(\hat{\pmb{x}})}
	\end{pmatrix}
	\mathrm{d}s(\hat{\pmb{x}})
	\mathrm{d}s(\pmb{d}) \right|, \\
	\pmb I_{P,L}(\pmb{z})
	&= \left| \int_{\mathbb{S}_1} \Xi_{\pmb{z}}^p(\pmb{d})
	\int_{\mathbb{S}_2} v_{pp}^{\infty}(\hat{\pmb{x}},\pmb{d})
	\overline{\Xi_{\pmb{z}}^p(\hat{\pmb{x}})}
	\mathrm{d}s(\hat{\pmb{x}})
	\mathrm{d}s(\pmb{d}) \right|, \\
	\pmb I_{S,L}(\pmb{z})
	&= \left| \int_{\mathbb{S}_1} \Xi_{\pmb{z}}^s(\pmb{d})
	\int_{\mathbb{S}_2} v_{ss}^{\infty}(\hat{\pmb{x}},\pmb{d})
	\overline{\Xi_{\pmb{z}}^s(\hat{\pmb{x}})}
	\mathrm{d}s(\hat{\pmb{x}})
	\mathrm{d}s(\pmb{d}) \right|.
\end{align*}
This formulation corresponds to scenarios involving either partial incidence or partial observation of the far-field data.

\begin{table}[h]
	\centering
	\caption{Parametric form for the exact boundary curves}	\label{table1}
	\begin{tabular}{@{}ll@{}}
		\toprule
		Type           &Parametrization\\
		\midrule
		pear-shaped    & 
		$\pmb{x}(t)=\pmb c + \alpha(2 + 0.3 \cos (3t)) (\cos t,\sin t)^\top,\quad t\in [0,2\pi)$\\
		~\\
		flower-shaped    & 
		$\pmb{x}(t)=\pmb c + \alpha(\cos^{10}t+\sin^{10}t)^{\tfrac{1}{10}} (\cos t,\sin t)^\top,\quad t\in [0,2\pi)$\\
		~\\
		5-leaf-shaped    & 
		$\pmb{x}(t)=\pmb c + \alpha(4 + 0.6 \cos (5t)) (\cos t,\sin t)^\top,\quad t\in [0,2\pi)$\\
		~\\
		kite-shaped    & 
		$\pmb{x}(t)=\pmb c + \alpha(\cos t+0.65\cos(2t)-0.65,1.5\sin t)^\top,\quad t\in [0,2\pi)$\\
		~\\
		peanut-shaped  &
		$\pmb{x}(t)=\pmb c +\alpha\sqrt{3\cos ^{2}t+1}(\cos t,\sin t)^\top,\quad t\in [0,2\pi)$\\
		~\\
		apple-shaped  &
		$\pmb{x}(t)=\pmb c +\alpha\frac{2(1+0.9\cos t)+0.1\sin (2t)}{1+0.75\cos t}(\cos t,\sin t)^\top,\quad t\in [0,2\pi)$\\
		~\\
		circle-shaped  &
		$\pmb{x}(t)=\pmb c + \alpha (\cos t,\sin  t)^\top,\quad t\in [0,2\pi)$\\
		\bottomrule
	\end{tabular}
	\par\smallskip\noindent{\footnotesize Note: $\pmb c$ and $\alpha\in\mathbb{R}$ represent the position and scale of the obstacle, respectively.}
\end{table}

\begin{figure}[!htpb]
	\centering 
	\resizebox{0.85\textwidth}{!}{
		\begin{tikzpicture}[
			transform shape, 
			label/.style={font=\normalsize, anchor=center, align=center},
			leftlabel/.style={font=\normalsize, anchor=east, align=right}
			]
			\node[label] at (-3,0) {Noise \\ Level};
			\node[label] at (-1,0) {Ground \\ Truth};
			\node[label] at (1.9,0) {$\pmb{I}_P$};
			\node[label] at (4.9,0) {$\pmb{I}_S$};
			\node[label] at (7.9,0) {$\pmb{I}_F$};

			\node[label] at (-3,-2) {0\%};
			\node[label] at (-3,-4.5) {30\%};
			
			\node[label] at (-1,-3.25) {\includegraphics[width=3.4cm, keepaspectratio]{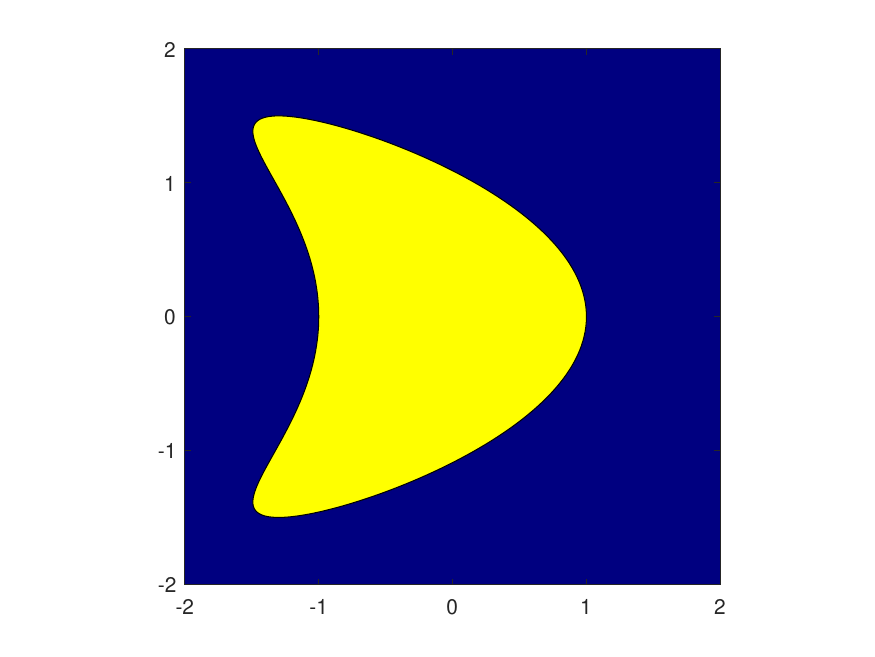}};
			\node[label] at (1.9,-2) {\includegraphics[width=3.4cm, keepaspectratio]{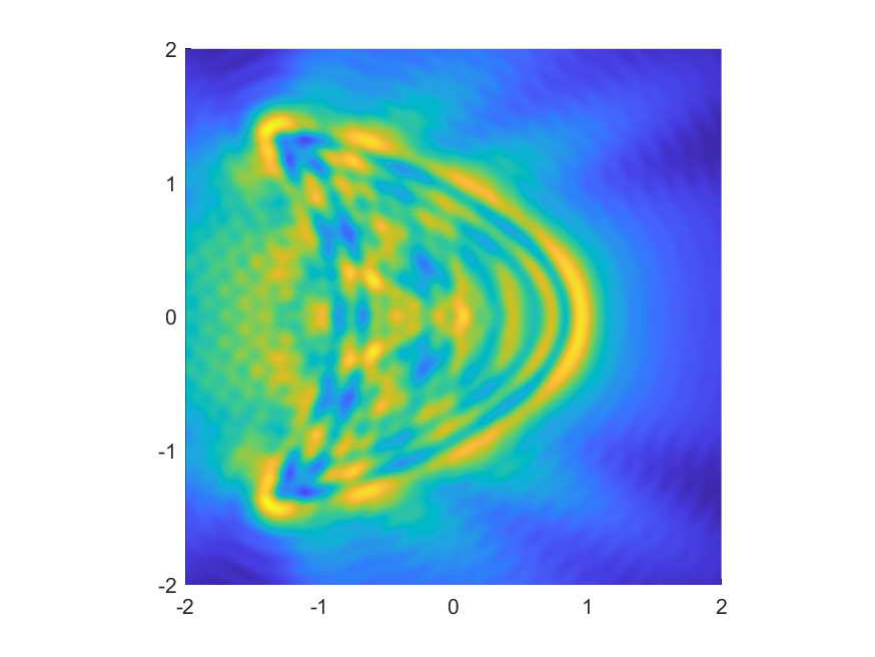}};
			\node[label] at (4.9,-2) {\includegraphics[width=3.4cm, keepaspectratio]{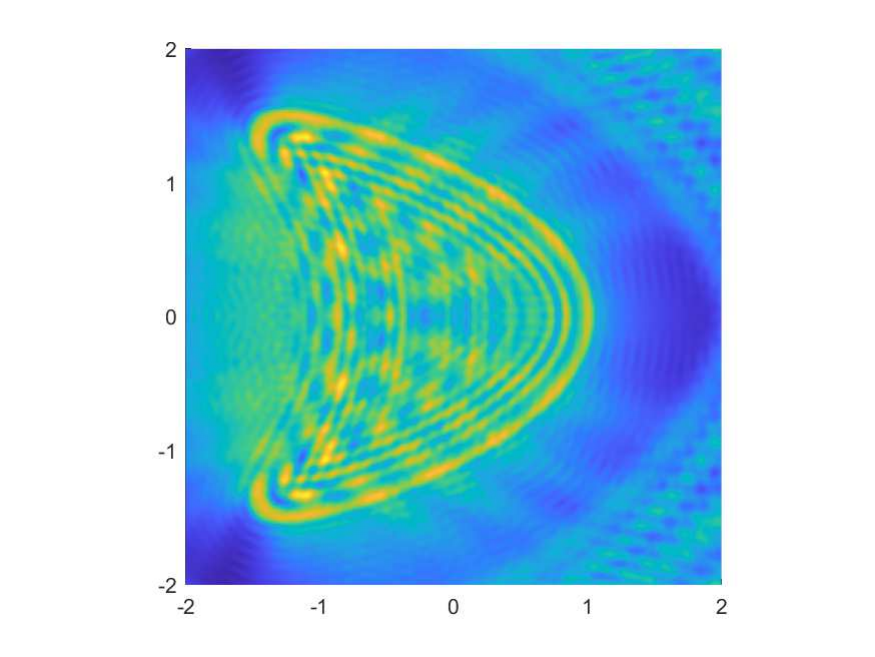}};
			\node[label] at (7.9,-2) {\includegraphics[width=3.4cm, keepaspectratio]{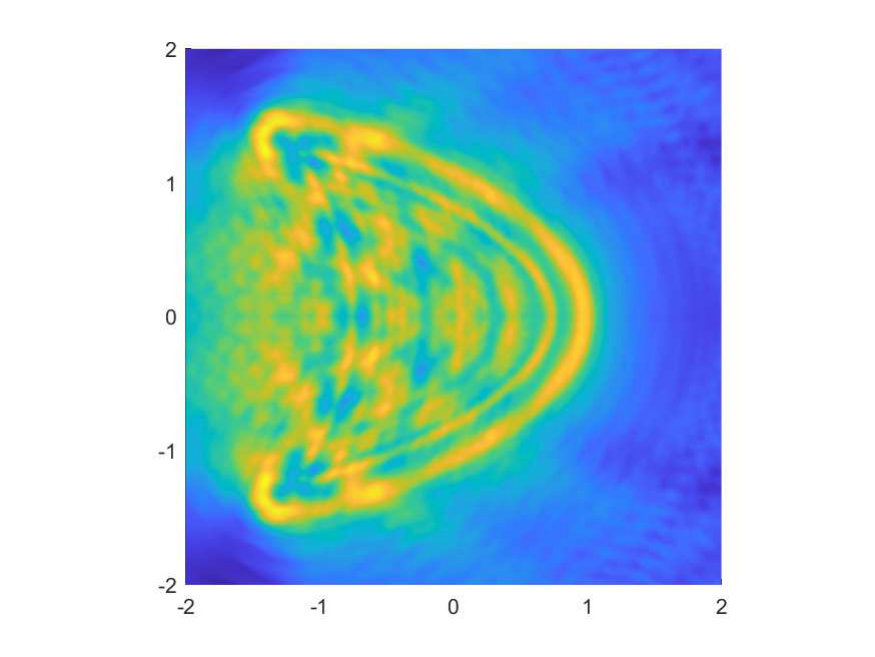}};
			
			\node[label] at (1.9,-4.5) {\includegraphics[width=3.4cm, keepaspectratio]{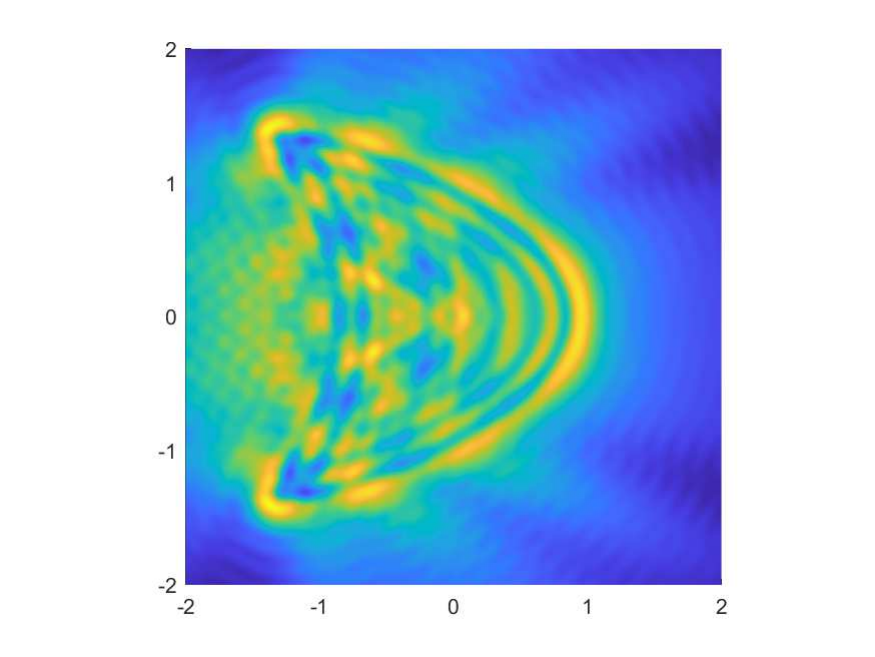}};
			\node[label] at (4.9,-4.5) {\includegraphics[width=3.4cm, keepaspectratio]{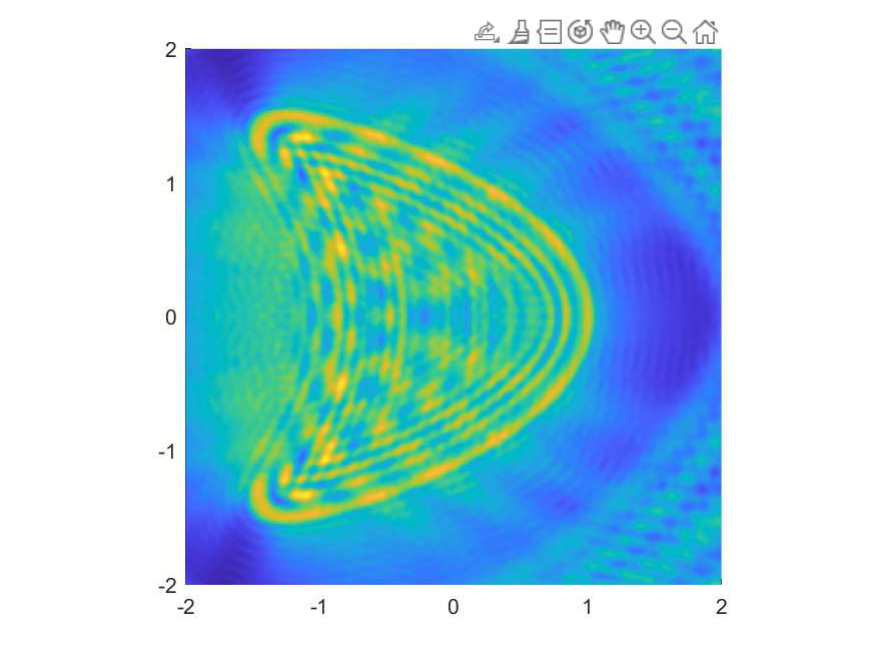}};
			\node[label] at (7.9,-4.5) {\includegraphics[width=3.4cm, keepaspectratio]{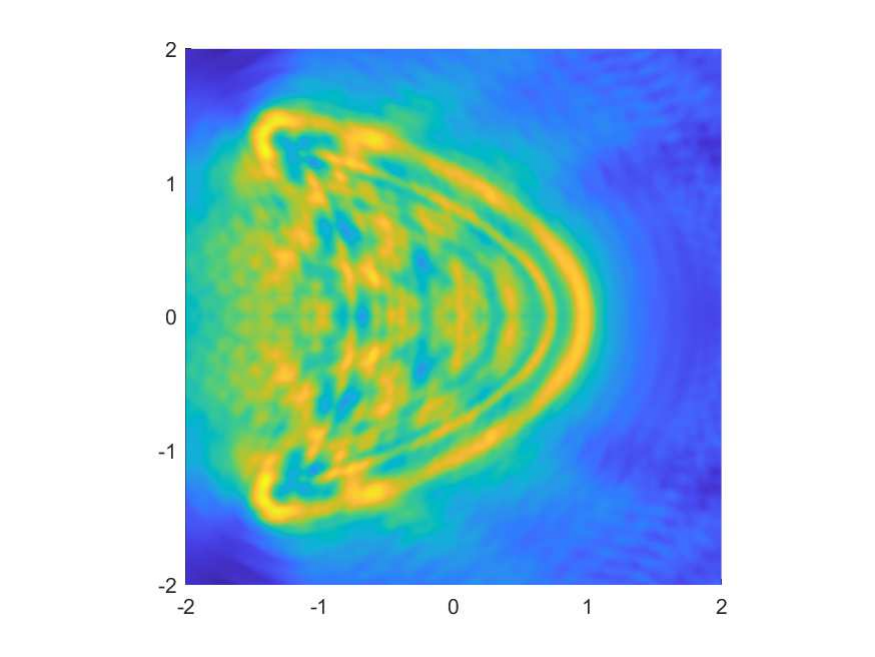}};
			
	\end{tikzpicture}}
	\caption{Reconstruction of kite-shaped penetrable obstacle using indicators $\pmb{I}_P$, $\pmb{I}_S$ and $\pmb{I}_F$ with  \(0\%\) and \(30\%\) noise level.}\label{figurekite}
\end{figure}
\begin{figure}[!htpb]
	\centering 
	\resizebox{0.85\textwidth}{!}{
		\begin{tikzpicture}[
			transform shape, 
			label/.style={font=\normalsize, anchor=center, align=center},
			leftlabel/.style={font=\normalsize, anchor=east, align=right}
			]
			\node[label] at (-3,0) {Noise \\ Level};
			\node[label] at (-1,0) {Ground \\ Truth};
			\node[label] at (1.9,0) {$\pmb{I}_P$};
			\node[label] at (4.9,0) {$\pmb{I}_S$};
			\node[label] at (7.9,0) {$\pmb{I}_F$};
			
			\node[label] at (-3,-2) {0\%};
			\node[label] at (-3,-4.5) {30\%};
			
			\node[label] at (-1,-3.25) {\includegraphics[width=3.4cm, keepaspectratio]{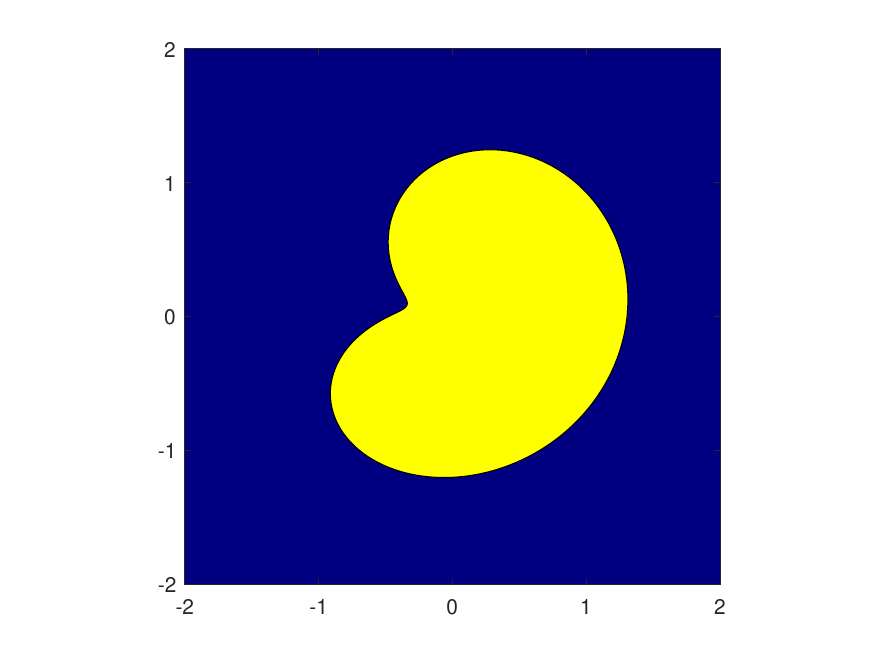}};
			\node[label] at (1.9,-2) {\includegraphics[width=3.4cm, keepaspectratio]{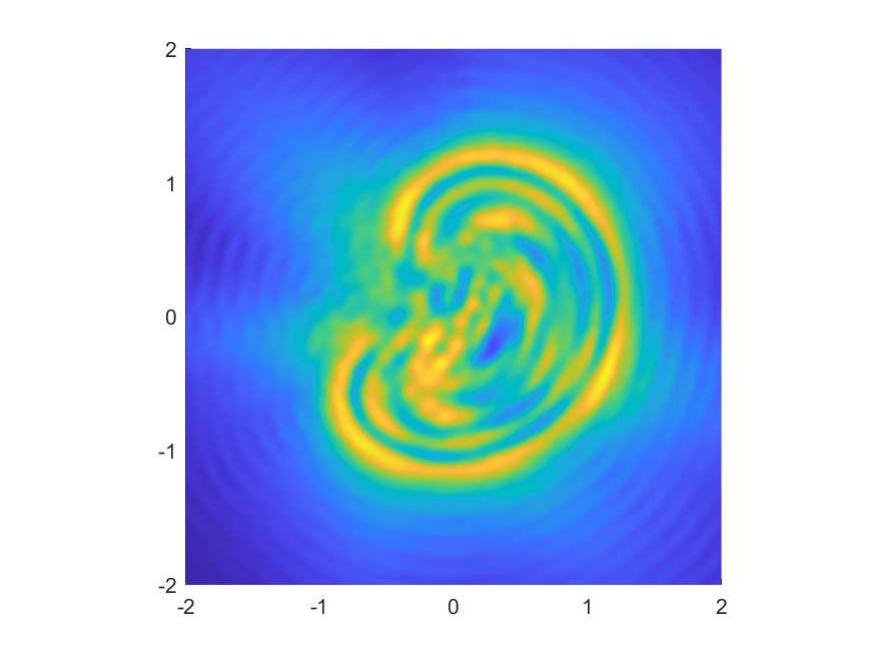}};
			\node[label] at (4.9,-2) {\includegraphics[width=3.4cm, keepaspectratio]{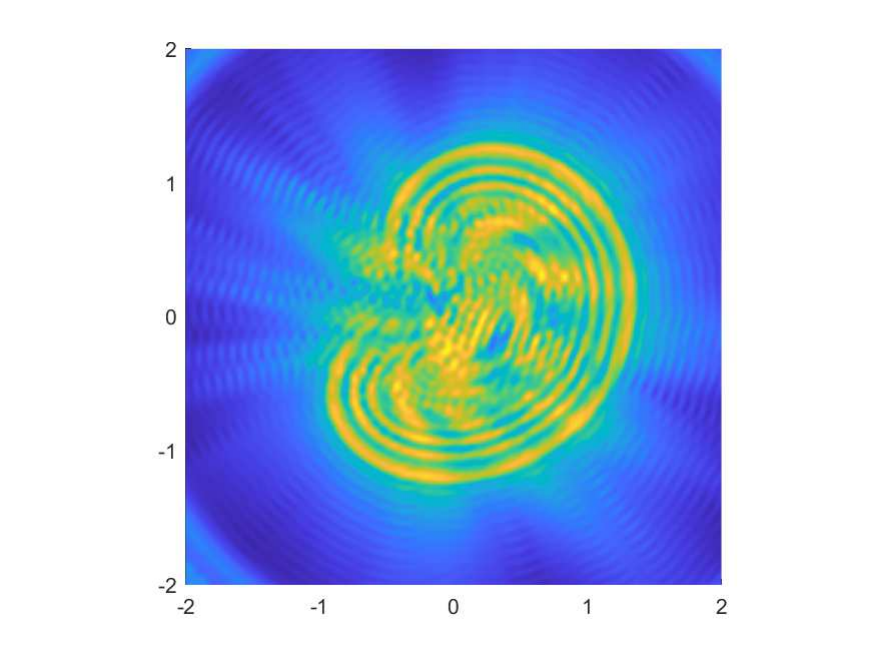}};
			\node[label] at (7.9,-2) {\includegraphics[width=3.4cm, keepaspectratio]{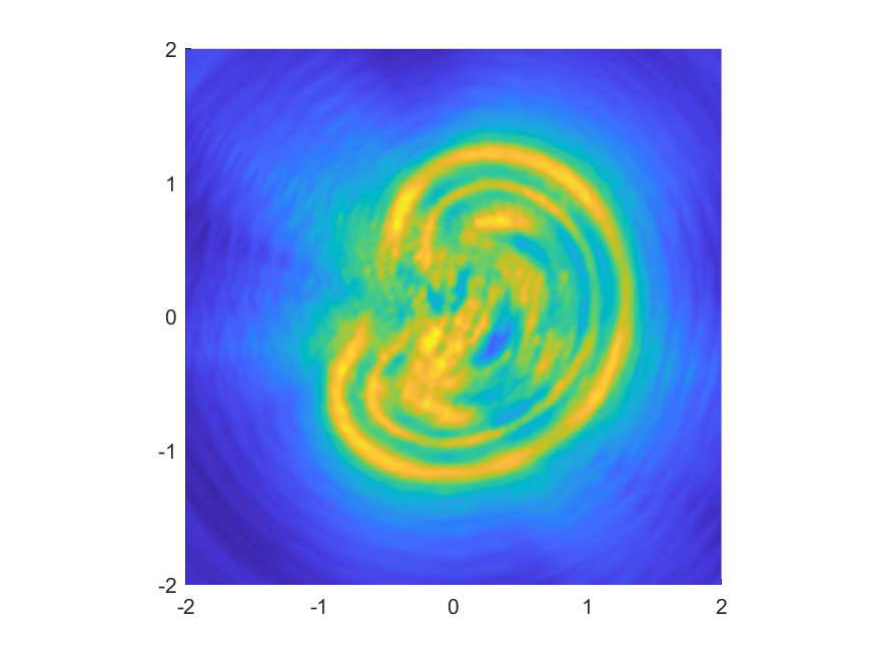}};
			
			\node[label] at (1.9,-4.5) {\includegraphics[width=3.4cm, keepaspectratio]{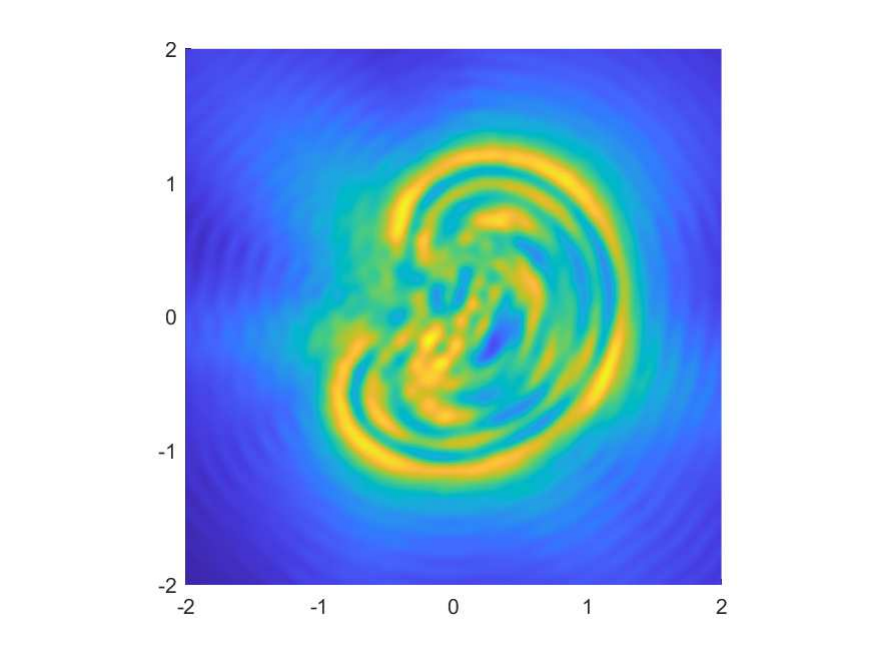}};
			\node[label] at (4.9,-4.5) {\includegraphics[width=3.4cm, keepaspectratio]{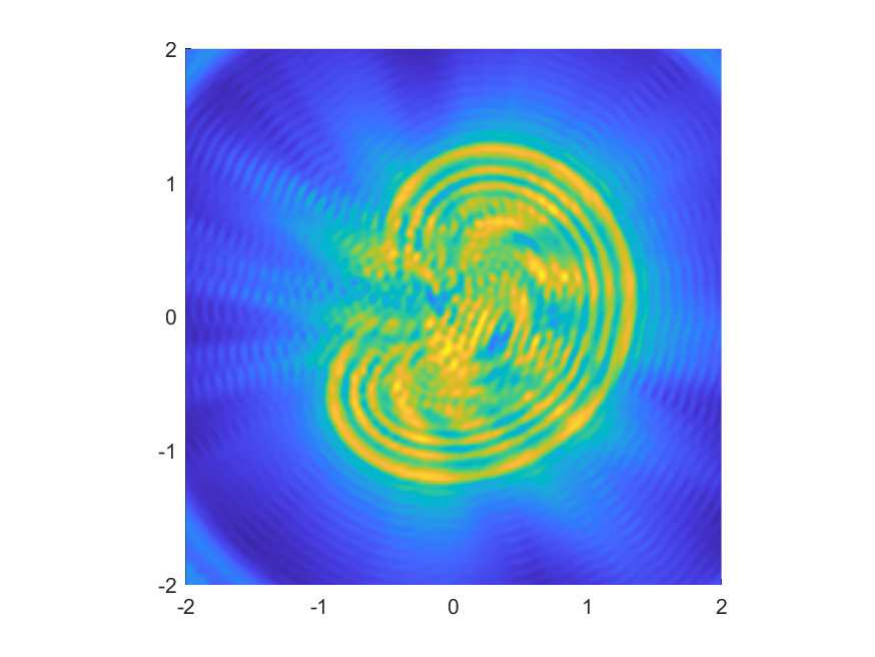}};
			\node[label] at (7.9,-4.5) {\includegraphics[width=3.4cm, keepaspectratio]{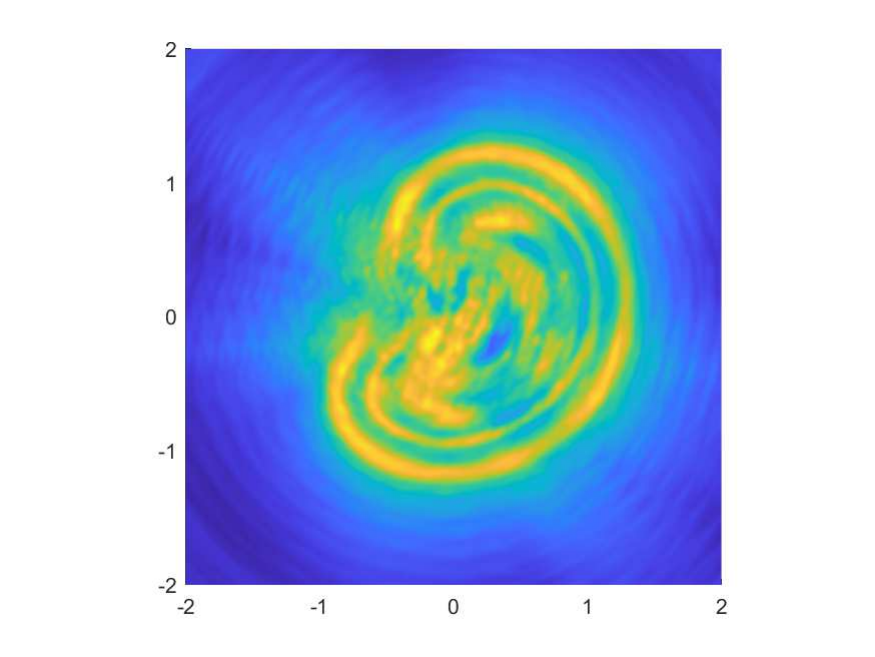}};
			
	\end{tikzpicture}}
	\caption{Reconstruction of apple-shaped penetrable obstacle using indicators $\pmb{I}_P$, $\pmb{I}_S$ and $\pmb{I}_F$ with  \(0\%\) and \(30\%\) noise level.}\label{figure3}
\end{figure}

The following parameter settings are used throughout all numerical experiments unless otherwise specified. The mass densities are set to \(\rho_1 = 1\) inside the obstacle \(D\) and \(\rho_2 = 1.5\) in the exterior domain \(\mathbb{R}^2 \setminus \overline{D}\). The Lam\'{e} constants are chosen as \(\lambda_1 = 5.5\), \(\mu_1 = 2.6\) for the obstacle and \(\lambda_2 = 1.5\), \(\mu_2 = 1\) for the background medium. The angular frequency is fixed at \(\omega = 10\pi\), and a relative noise level of \(\delta = 30\%\) is added to the far-field data. The sampling region is the square domain \([-2, 2] \times [-2, 2]\), discretized using a \(361 \times 361\) uniform grid. The boundary \(\partial D\) is discretized with 256 quadrature nodes (i.e., \(n = 128\)).

To evaluate the method, we present reconstruction results for multiple shapes, whose parameterized boundary descriptions are provided in Table \ref{table1}.

\begin{figure}[!htpb]
	\centering 
	\resizebox{0.95\textwidth}{!}{
		\begin{tikzpicture}[
			transform shape, 
			label/.style={font=\normalsize, anchor=center, align=center},
			leftlabel/.style={font=\normalsize, anchor=east, align=right}
			]
			\node[label] at (-3.4,0) {Type};
			\node[label] at (-1,0) {Ground \\ Truth};
			\node[label] at (1.9,0) {$\pmb{I}_P$};
			\node[label] at (4.9,0) {$\pmb{I}_S$};
			\node[label] at (7.9,0) {$\pmb{I}_F$};
			
			\node[label] at (-3.4,-2) {Pear};
			\node[label] at (-3.5,-4.5) {Rectangle};
			\node[label] at (-3.5,-7) {Apple};
			
			\node[label] at (-1,-2) {\includegraphics[width=3.4cm, keepaspectratio]{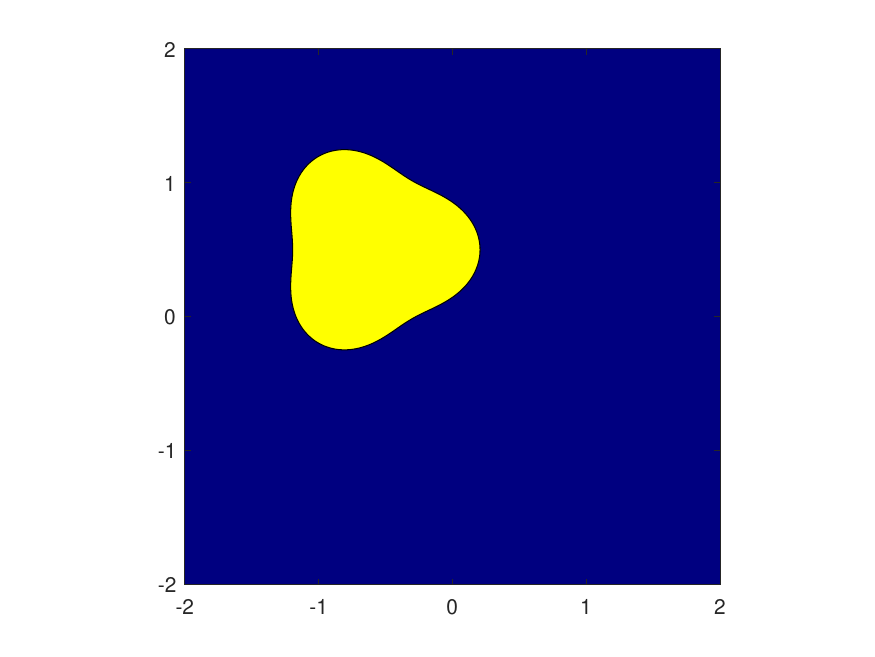}};
			\node[label] at (1.8,-2) {\includegraphics[width=3.4cm, keepaspectratio]{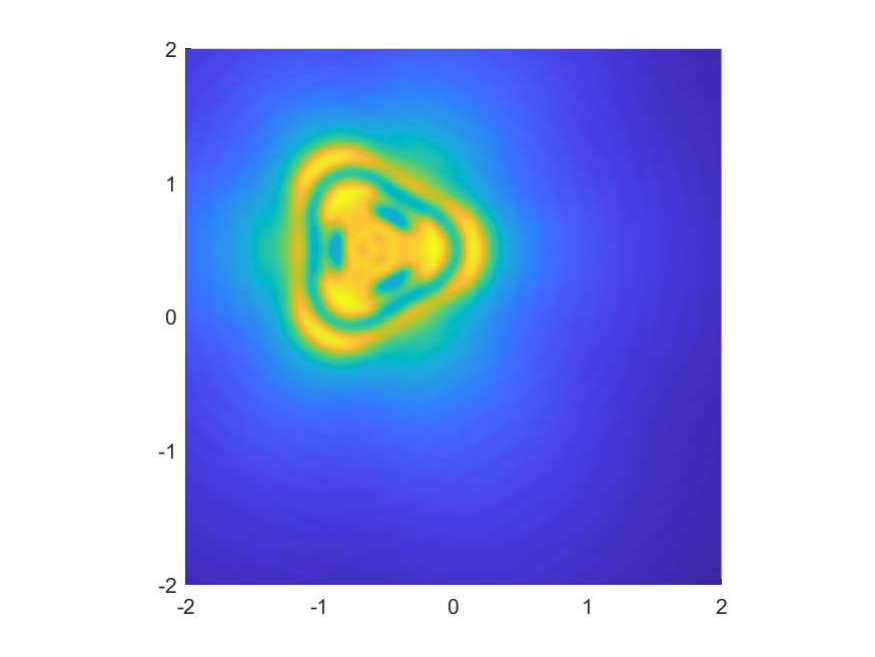}};
			\node[label] at (4.6,-2) {\includegraphics[width=3.4cm, keepaspectratio]{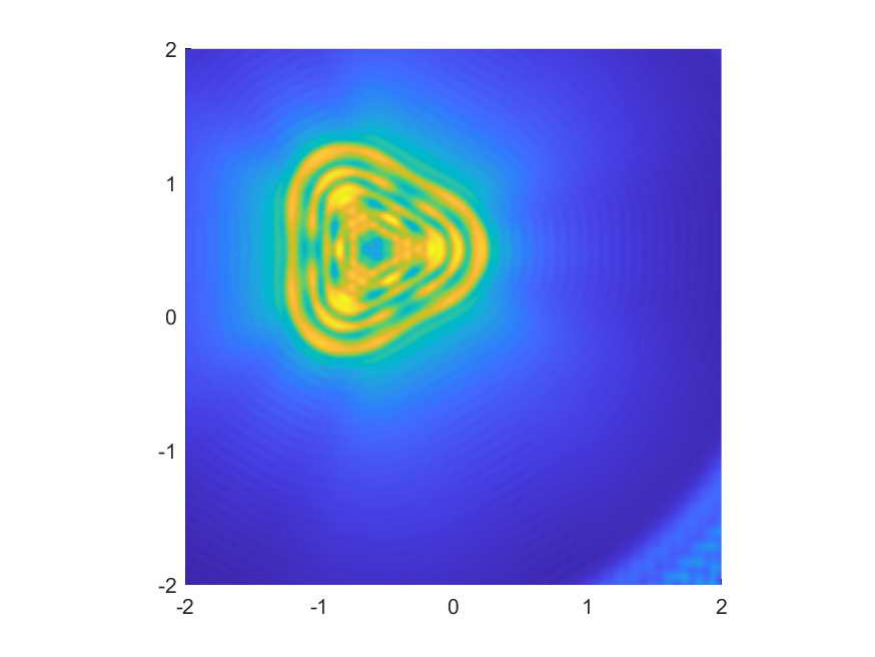}};
			\node[label] at (7.4,-2) {\includegraphics[width=3.4cm, keepaspectratio]{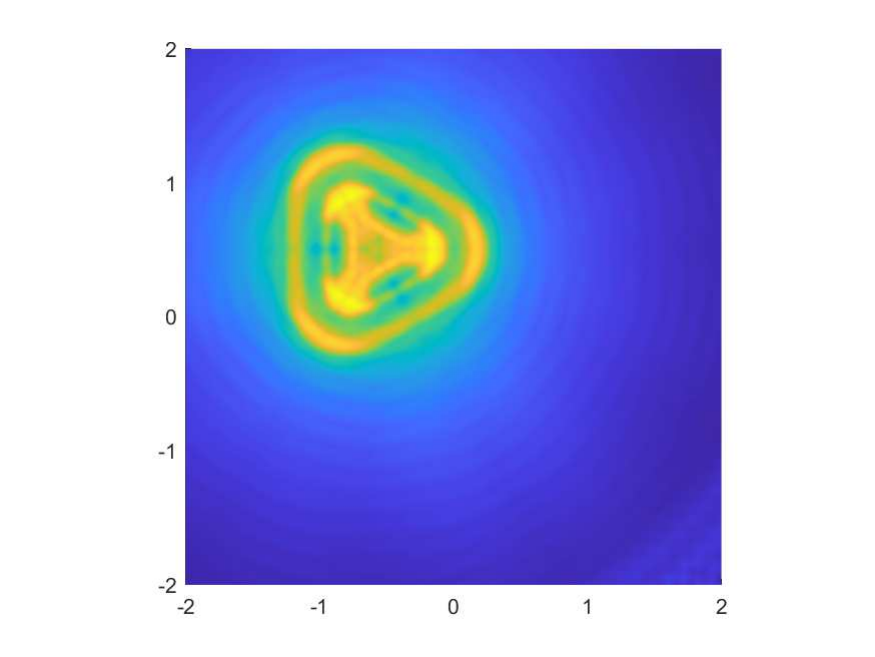}};

			\node[label] at (-1,-4.5) {\includegraphics[width=3.4cm, keepaspectratio]{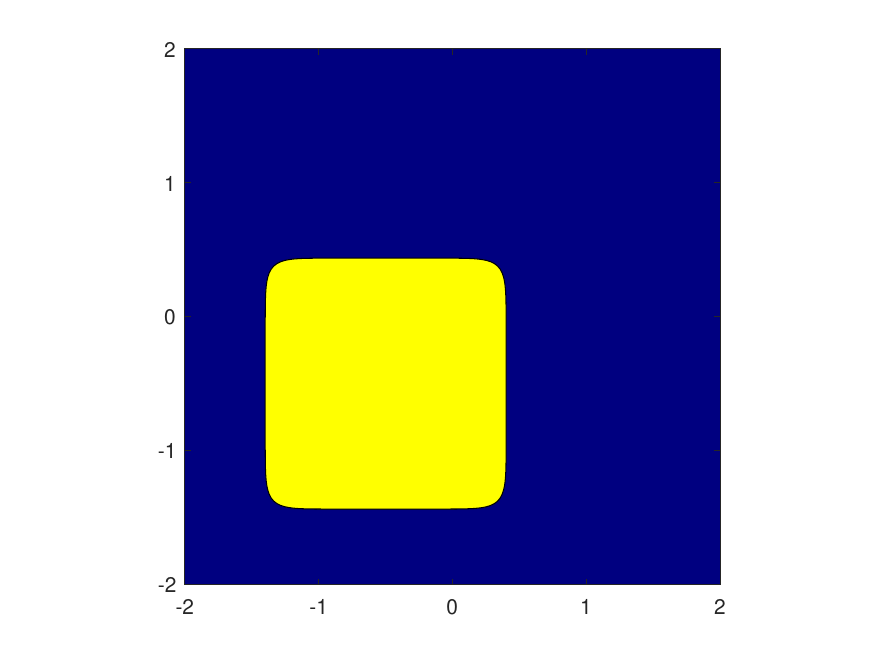}};
			\node[label] at (1.8,-4.5) {\includegraphics[width=3.4cm, keepaspectratio]{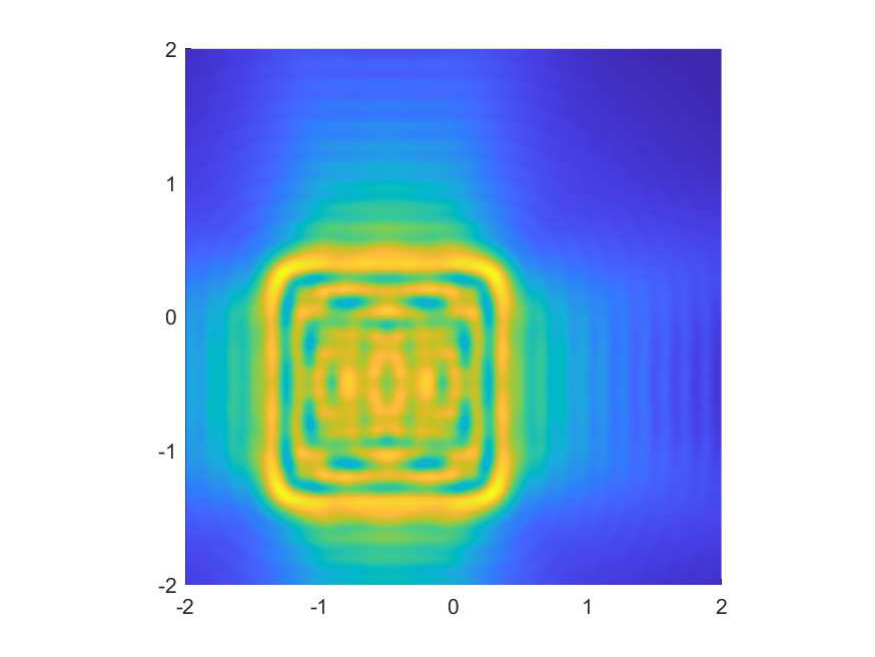}};
			\node[label] at (4.6,-4.5) {\includegraphics[width=3.4cm, keepaspectratio]{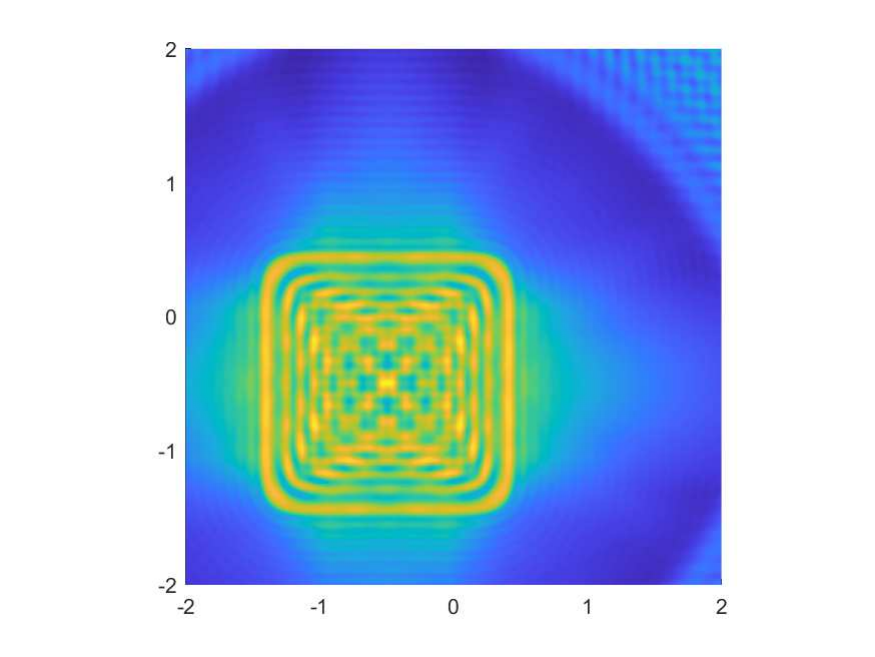}};
			\node[label] at (7.4,-4.5) {\includegraphics[width=3.4cm, keepaspectratio]{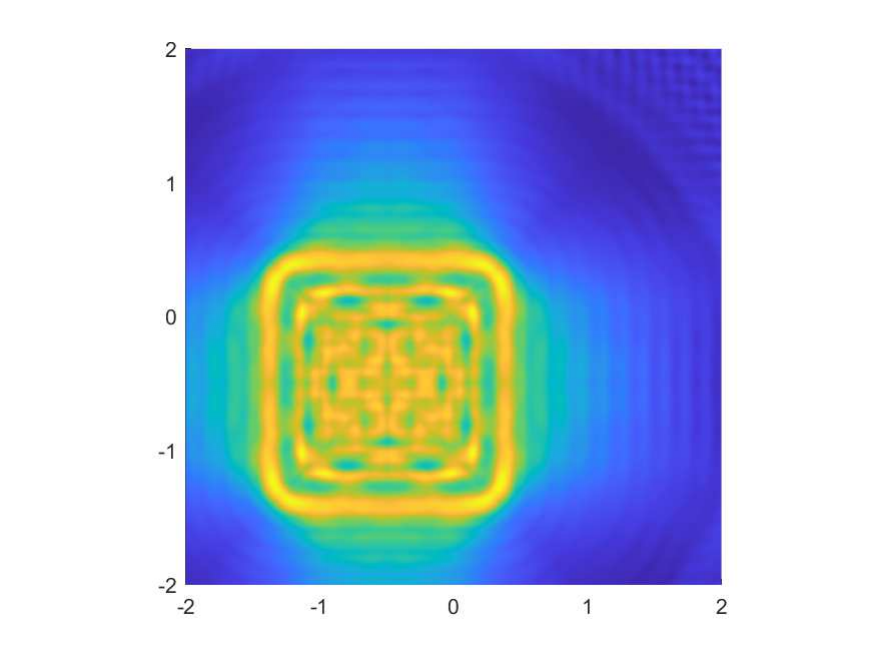}};

			\node[label] at (-1,-7) {\includegraphics[width=3.4cm, keepaspectratio]{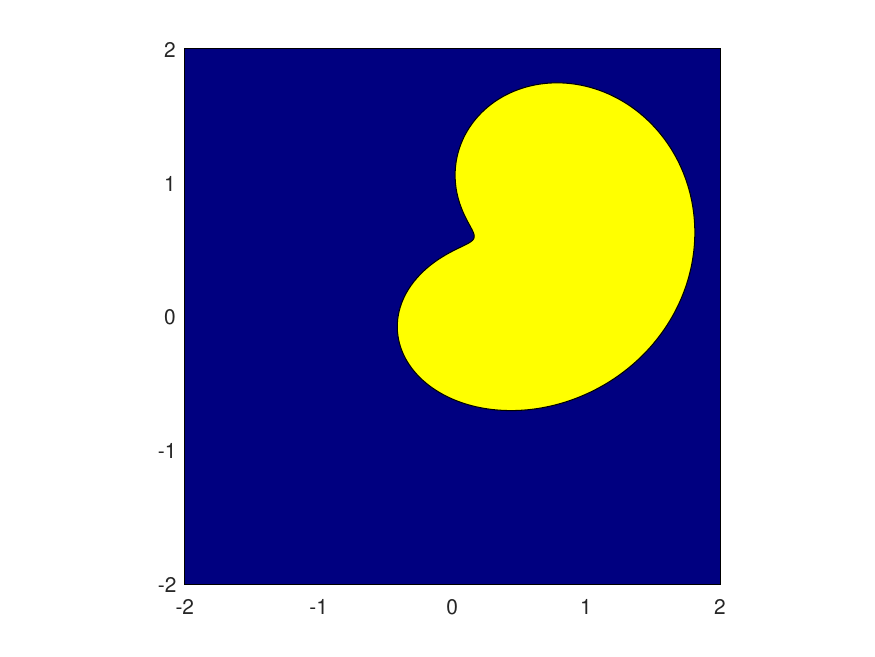}};
			\node[label] at (1.8,-7) {\includegraphics[width=3.4cm, keepaspectratio]{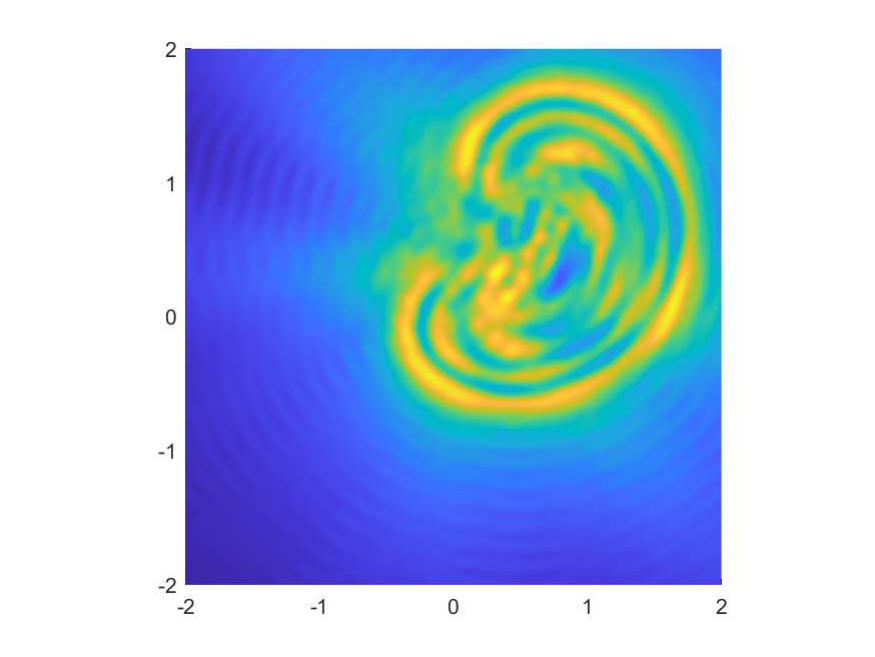}};
			\node[label] at (4.6,-7) {\includegraphics[width=3.4cm, keepaspectratio]{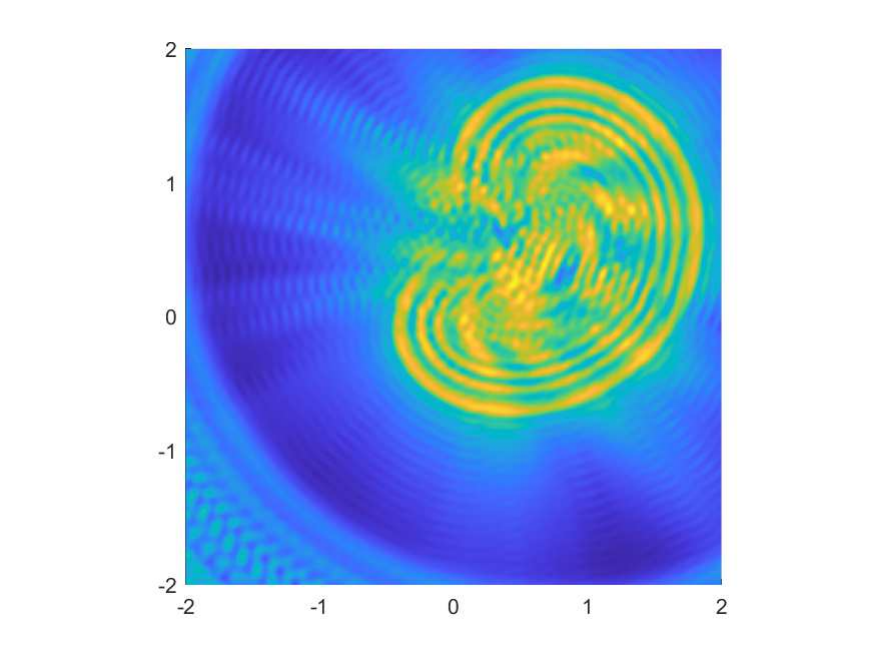}};
			\node[label] at (7.4,-7) {\includegraphics[width=3.4cm, keepaspectratio]{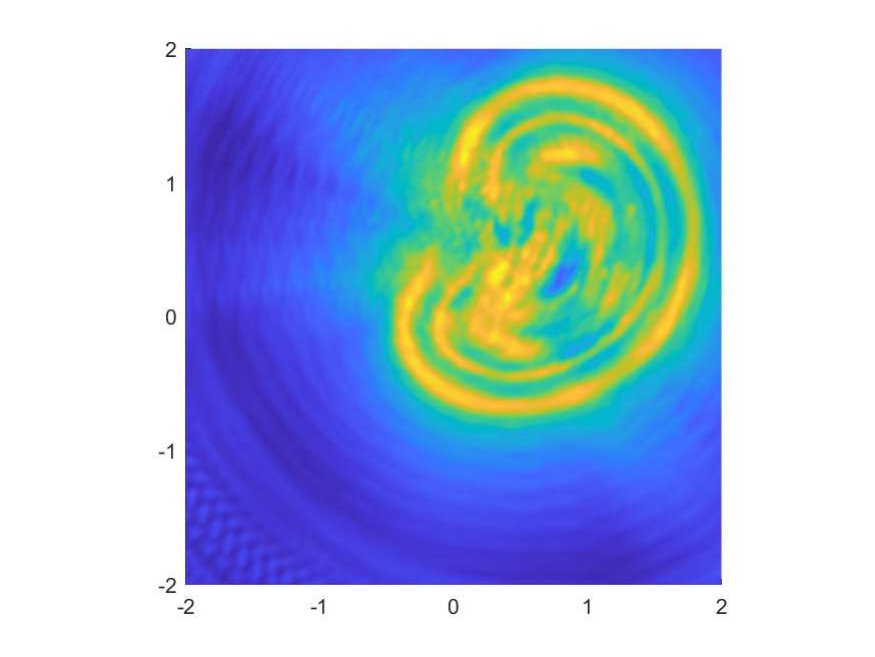}};

	\end{tikzpicture}}
	\caption{Reconstructions of pear-shaped, rectangle-shaped and apple-shaped penetrable obstacles using indicators $\pmb{I}_P$, $\pmb{I}_S$ and $\pmb{I}_F$ with \(30\%\) noise level. }\label{figure4}
\end{figure}

\subsection*{Example 1: Reconstructions with different noise levels.}

We consider the inverse scattering problem of reconstructing penetrable obstacles under different noise levels. Figures~\ref{figure1}--\ref{figure3} display the reconstruction results for flower-shaped, 5-leaf-shaped, kite-shaped and apple-shaped obstacles obtained by the indicators \(\pmb{I}_P\), \(\pmb{I}_S\), and \(\pmb{I}_F\), respectively.

For all four geometries, the indicators produce clear and stable imaging responses at both \(0\%\) and \(30\%\) noise levels. The indicators attain their largest values in a neighborhood of the true boundary and decrease smoothly as the sampling point departs from it. Although the presence of noise slightly blurs the fine features of the imaging function, the overall location and shape of each obstacle remain accurately identifiable, demonstrating the robustness of all three indicators.

\begin{figure}[!htpb]
	\centering 
	\resizebox{0.85\textwidth}{!}{
		\begin{tikzpicture}[
			transform shape, 
			label/.style={font=\normalsize, anchor=center, align=center},
			leftlabel/.style={font=\normalsize, anchor=east, align=right}
			]
			\node[label] at (0,0){Ground \\ Truth};
			\node[label] at (3,0) {$\pmb{I}_P$};
			\node[label] at (6,0) {$\pmb{I}_S$};
			\node[label] at (9,0)  {$\pmb{I}_F$};
			
			\node[label] at (0,-2) {\includegraphics[width=3.5cm, keepaspectratio]{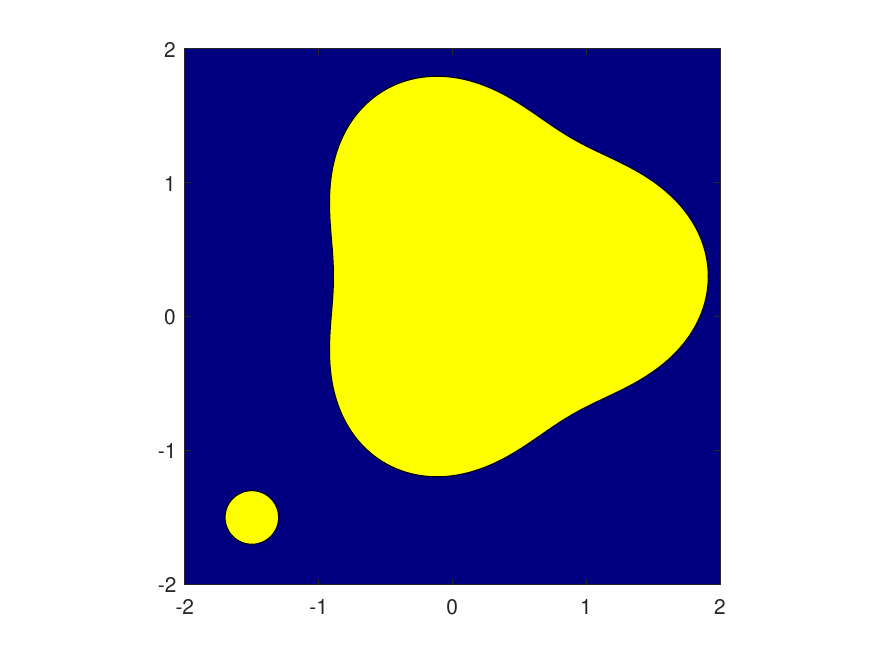}};
			\node[label] at (3,-2) {\includegraphics[width=3.5cm, keepaspectratio]{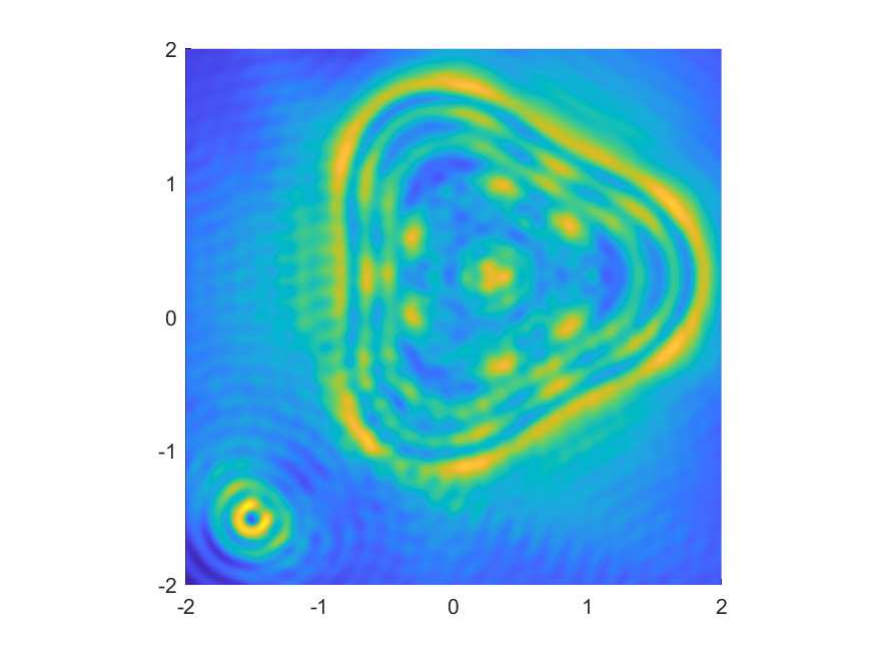}};
			\node[label] at (6,-2) {\includegraphics[width=3.5cm, keepaspectratio]{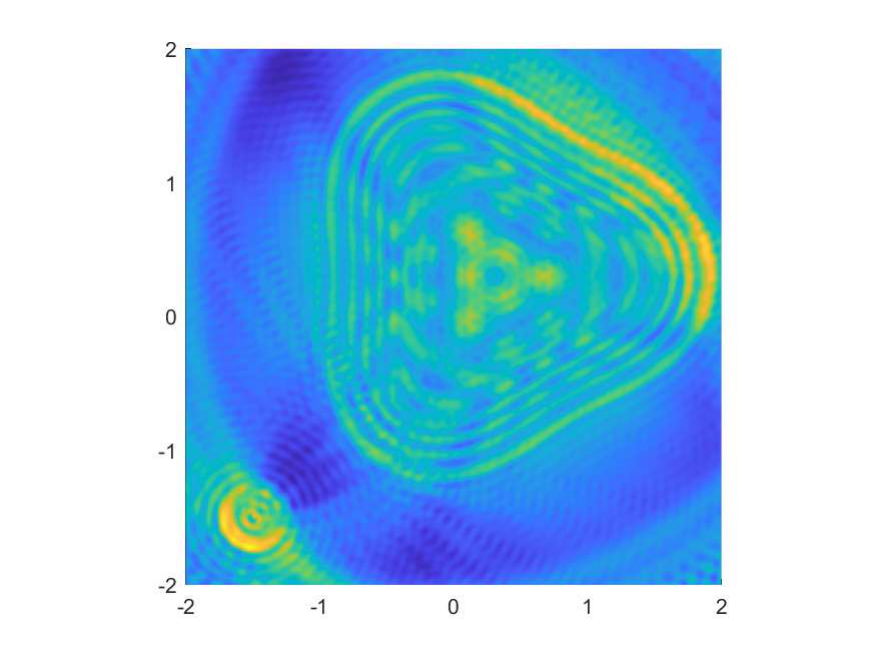}};
			\node[label] at (9,-2) {\includegraphics[width=3.5cm, keepaspectratio]{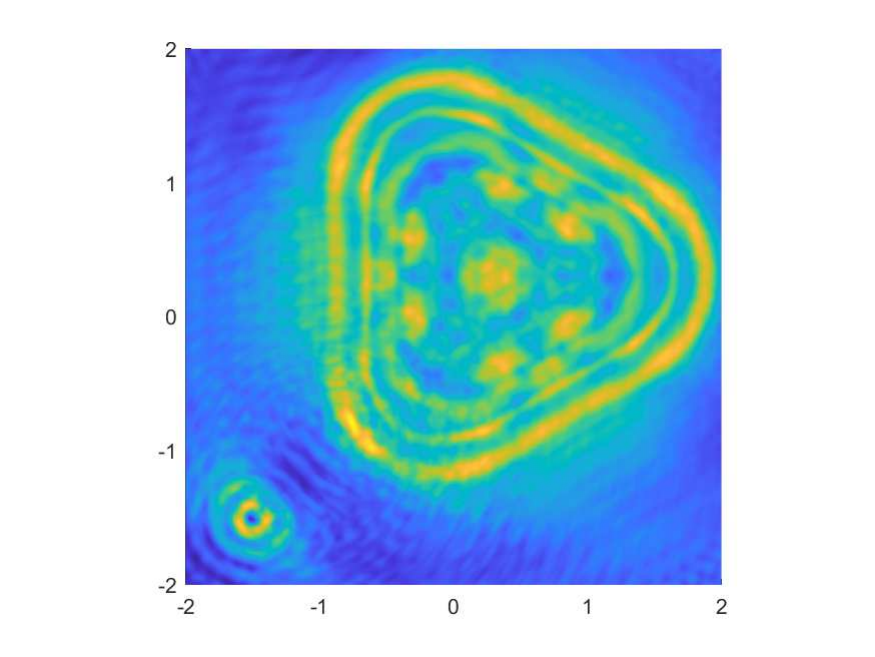}};
			
	\end{tikzpicture}}
	\caption{Reconstruction of the pear-circle obstacle using indicators	$\pmb{I}_P$, $\pmb{I}_S$ and $\pmb{I}_F$ with \(30\%\) noise level.}\label{figure5}
\end{figure}
\begin{figure}[!htpb]
	\centering 
	\resizebox{0.85\textwidth}{!}{
		\begin{tikzpicture}[
			transform shape, 
			label/.style={font=\normalsize, anchor=center, align=center},
			leftlabel/.style={font=\normalsize, anchor=east, align=right}
			]
			\node[label] at (0,0){Ground \\ Truth};
			\node[label] at (3,0) {$\pmb{I}_P$};
			\node[label] at (6,0) {$\pmb{I}_S$};
			\node[label] at (9,0)  {$\pmb{I}_F$};
			
			\node[label] at (0,-2) {\includegraphics[width=3.5cm, keepaspectratio]{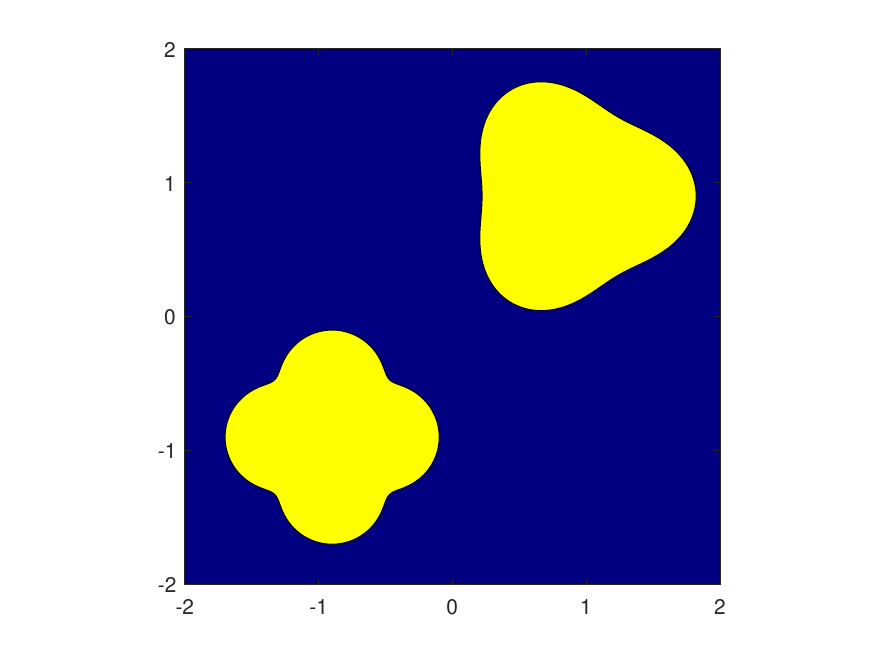}};
			\node[label] at (3,-2) {\includegraphics[width=3.5cm, keepaspectratio]{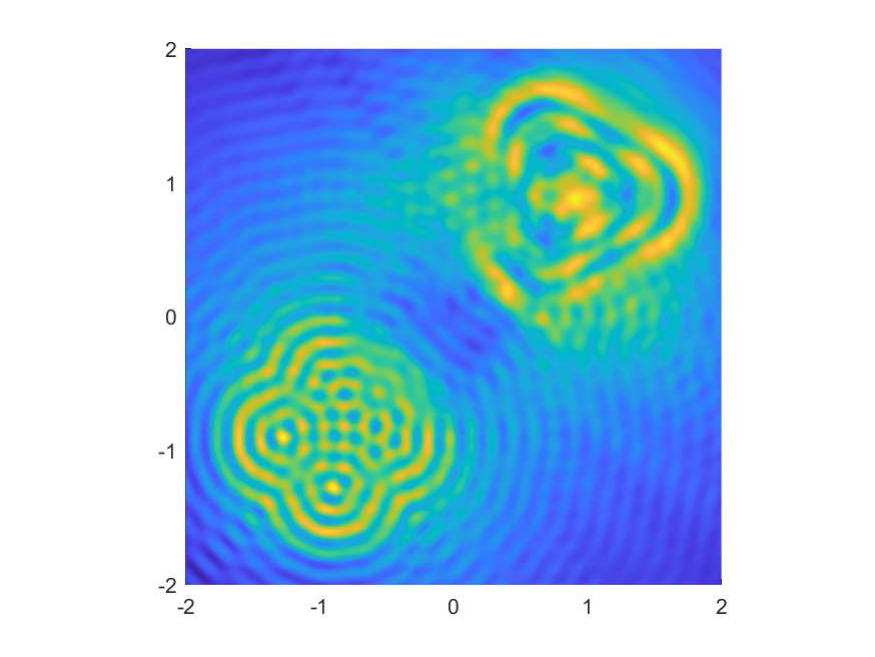}};
			\node[label] at (6,-2) {\includegraphics[width=3.5cm, keepaspectratio]{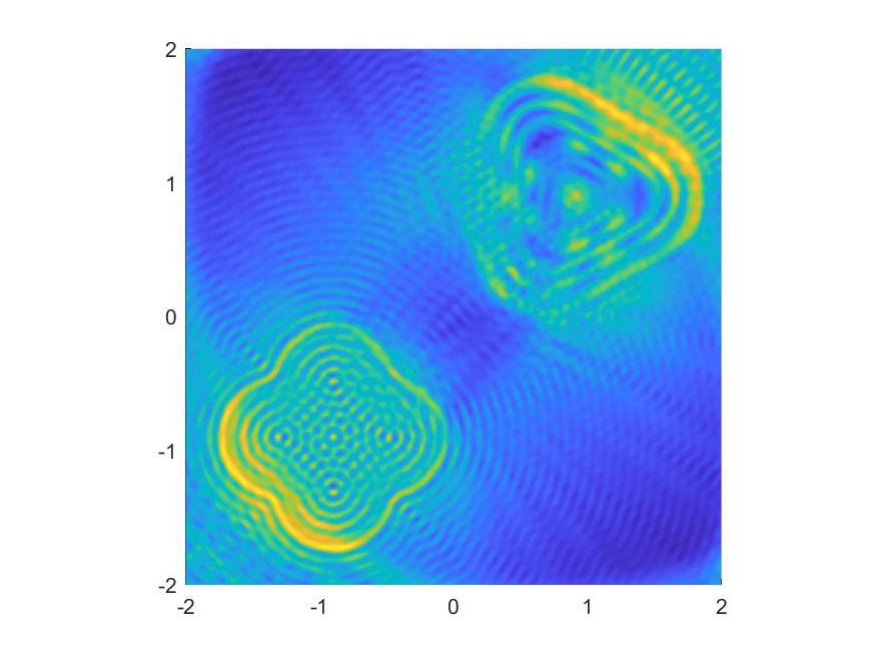}};
			\node[label] at (9,-2) {\includegraphics[width=3.5cm, keepaspectratio]{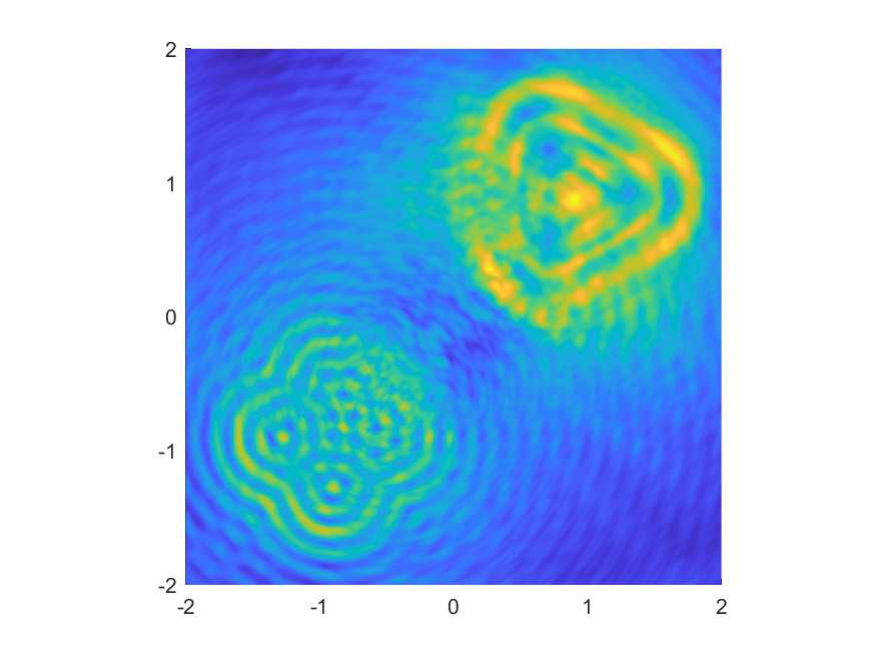}};
			
	\end{tikzpicture}}
	\caption{Reconstruction of the pear-flower obstacle using indicators	$\pmb{I}_P$, $\pmb{I}_S$ and $\pmb{I}_F$ with \(30\%\) noise level.}\label{figure6}
\end{figure}

\subsection*{Example 2: Reconstructions with asymmetric sampling region.}

Figure~\ref{figure4} presents the reconstructions of the pear-shaped, rectangle-shaped and apple-shaped obstacles obtained in an asymmetric sampling region. Even with \(30\%\) noise added to the data, the indicators \(\pmb{I}_P\), \(\pmb{I}_S\), and \(\pmb{I}_F\) successfully recover the approximate location and overall geometry of the obstacles, and each indicator exhibits its characteristic imaging behavior across the three test shapes.

\begin{figure}[!htpb]
	\centering 
	\resizebox{0.85\textwidth}{!}{
		\begin{tikzpicture}[
			transform shape, 
			label/.style={font=\normalsize, anchor=center, align=center},
			leftlabel/.style={font=\normalsize, anchor=east, align=right}
			]
			\node[label] at (0,0){Ground \\ Truth};
			\node[label] at (3,0) {$\pmb{I}_P$};
			\node[label] at (6,0) {$\pmb{I}_S$};
			\node[label] at (9,0)  {$\pmb{I}_F$};
			
			\node[label] at (0,-2) {\includegraphics[width=3.5cm, keepaspectratio]{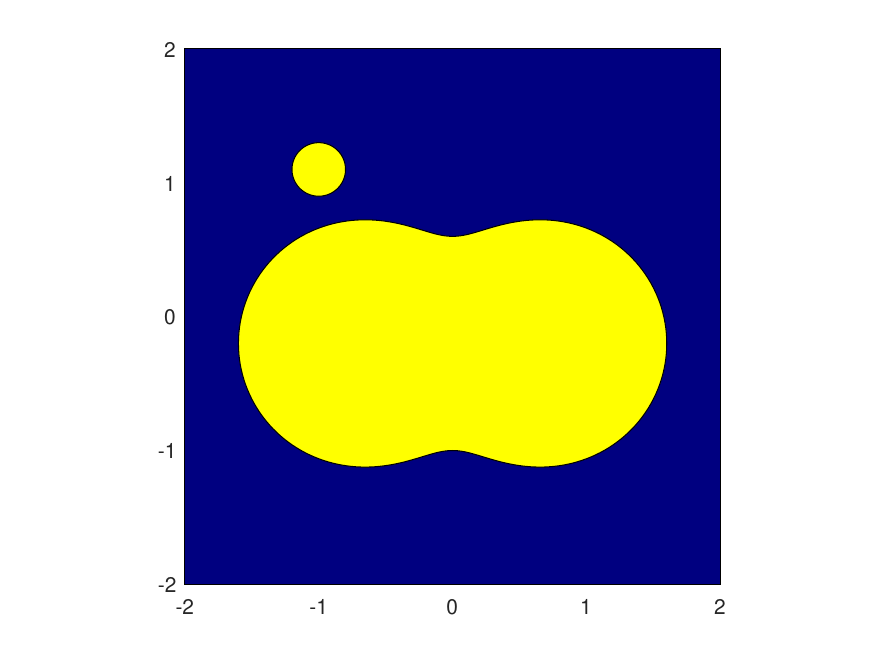}};
			\node[label] at (3,-2) {\includegraphics[width=3.5cm, keepaspectratio]{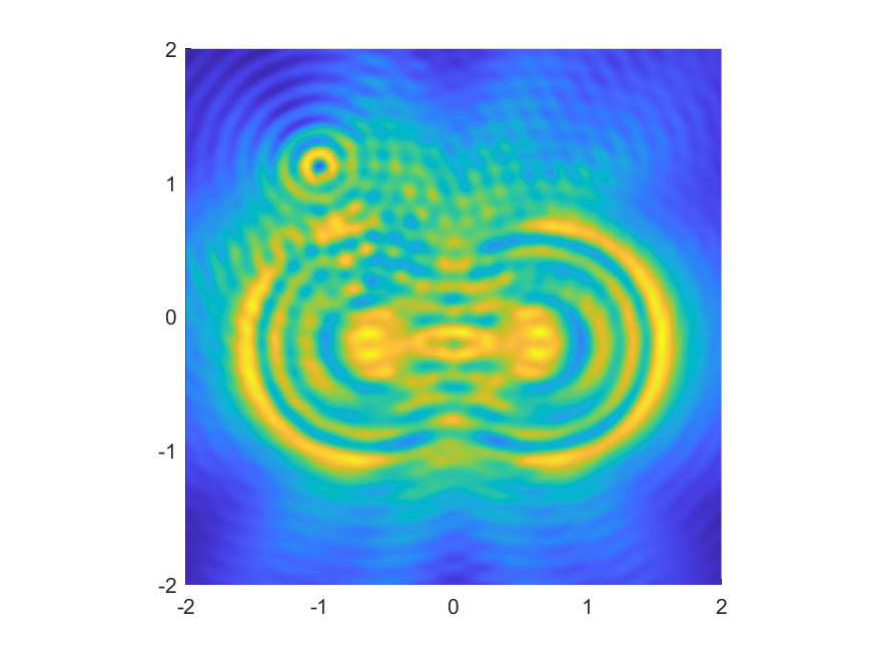}};
			\node[label] at (6,-2) {\includegraphics[width=3.5cm, keepaspectratio]{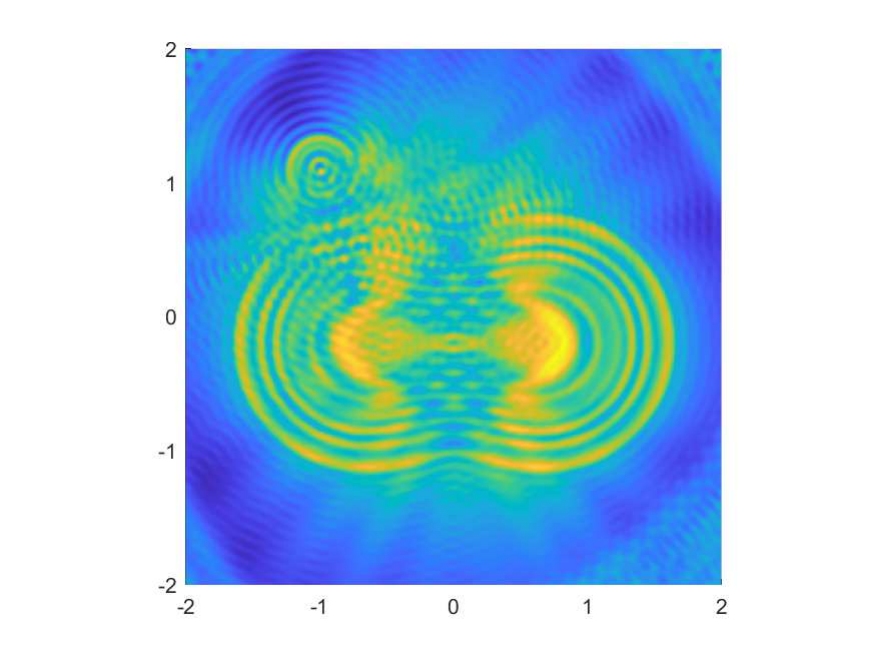}};
			\node[label] at (9,-2) {\includegraphics[width=3.5cm, keepaspectratio]{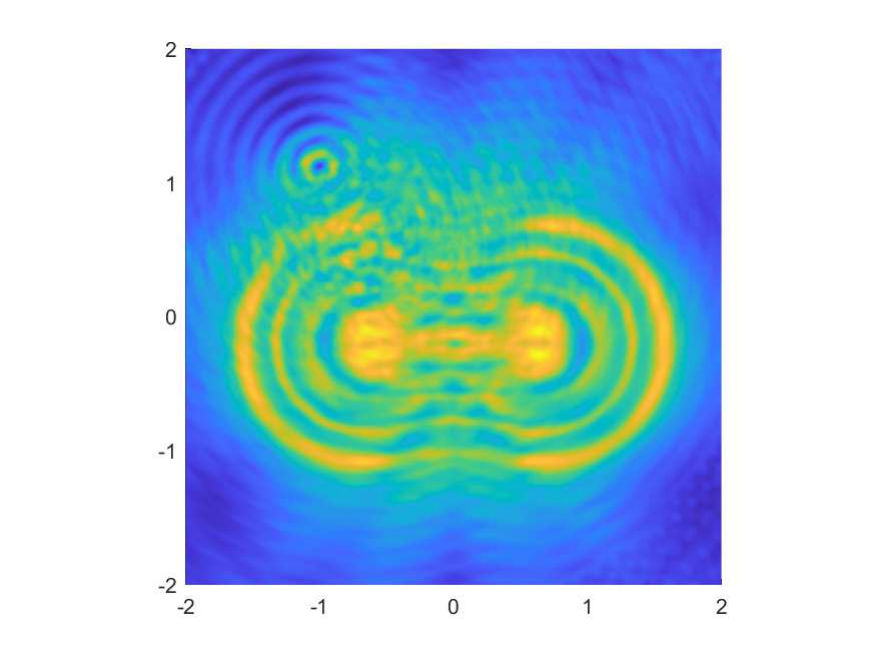}};
			
	\end{tikzpicture}}
	\caption{Reconstruction of the peanut-circle obstacle using indicators	$\pmb{I}_P$, $\pmb{I}_S$ and $\pmb{I}_F$ with \(30\%\) noise level.}\label{figure7}
\end{figure}

\begin{figure}[!htpb]
	\centering 
	\resizebox{0.85\textwidth}{!}{
		\begin{tikzpicture}[
			transform shape, 
			label/.style={font=\normalsize, anchor=center, align=center},
			leftlabel/.style={font=\normalsize, anchor=east, align=right}
			]
			\node[label] at (-3,0) {Incident\\Range};
			\node[label] at (0,0) {$\pmb{I}_{P,L}$};
			\node[label] at (3,0){$\pmb{I}_{S,L}$};
			\node[label] at (6,0) {$\pmb{I}_{F,L}$};
			
			\node[label] at (-3,-2) {[0,$\pi$]};
			\node[label] at (-3,-4.5) {[$\pi$,$2\pi$]};
			
			\node[label] at (0,-2) {\includegraphics[width=3.5cm, keepaspectratio]{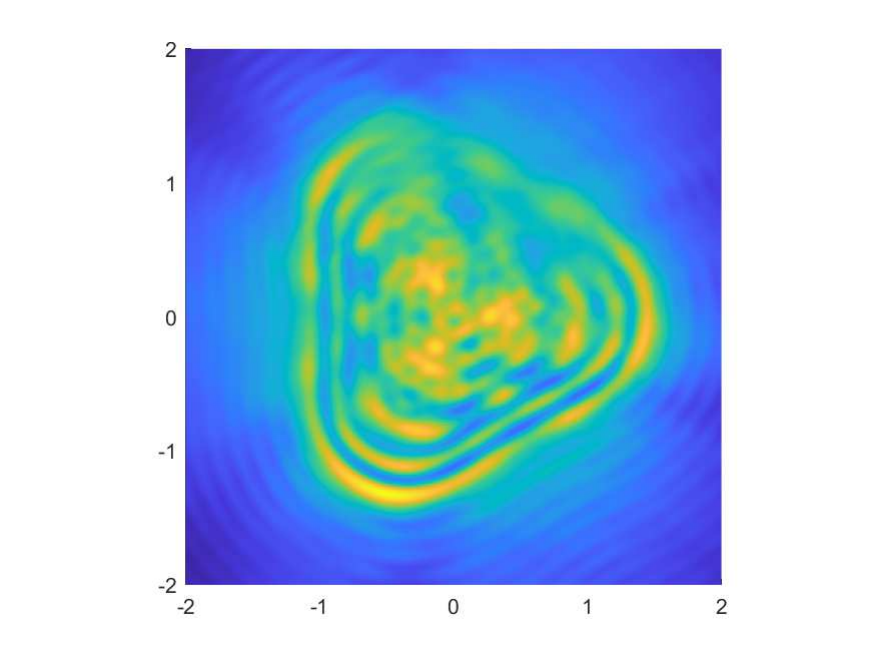}};
			\node[label] at (0,-4.5) {\includegraphics[width=3.5cm, keepaspectratio]{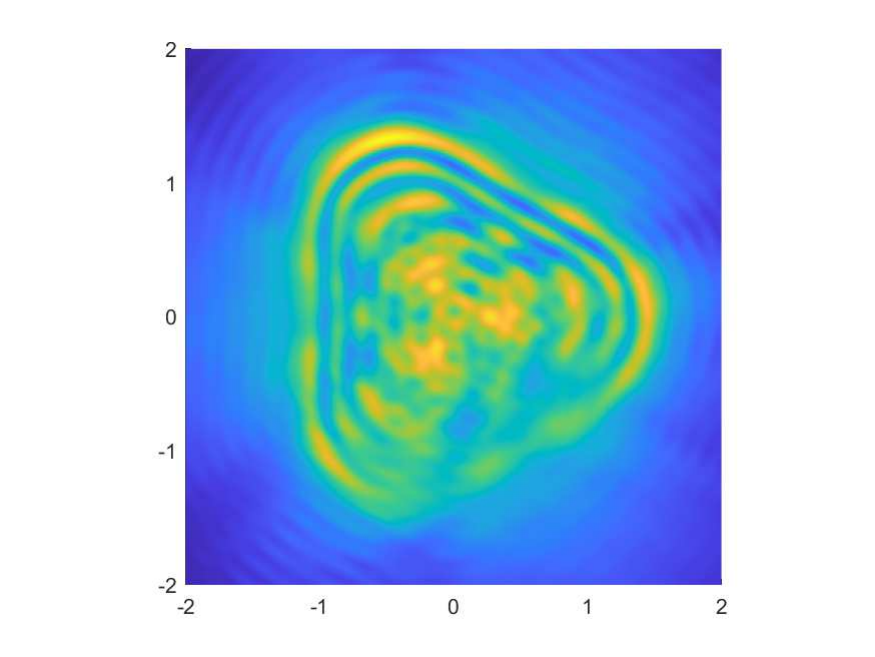}};
			
			\node[label] at (3,-2) {\includegraphics[width=3.5cm, keepaspectratio]{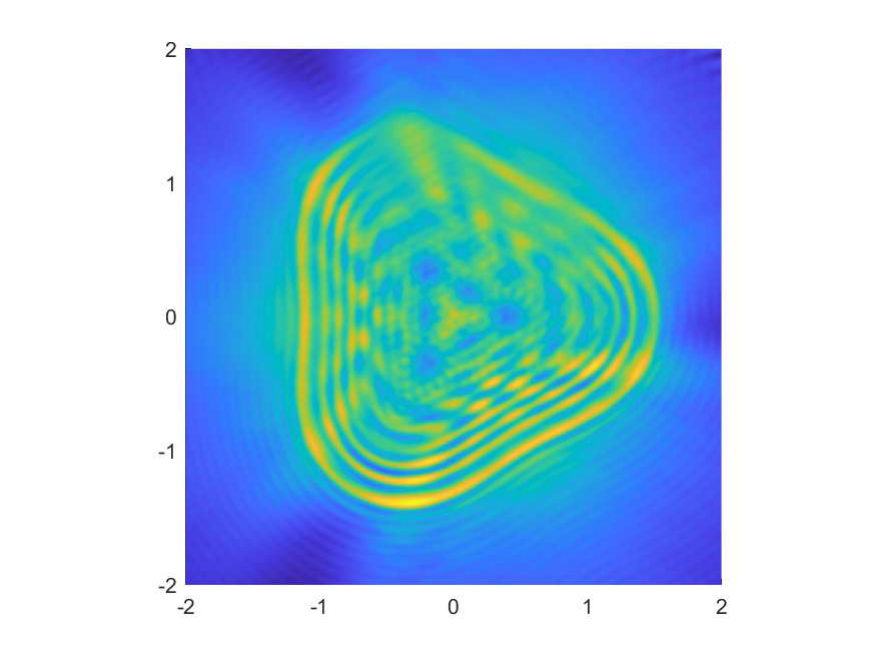}};
			\node[label] at (6,-2) {\includegraphics[width=3.5cm, keepaspectratio]{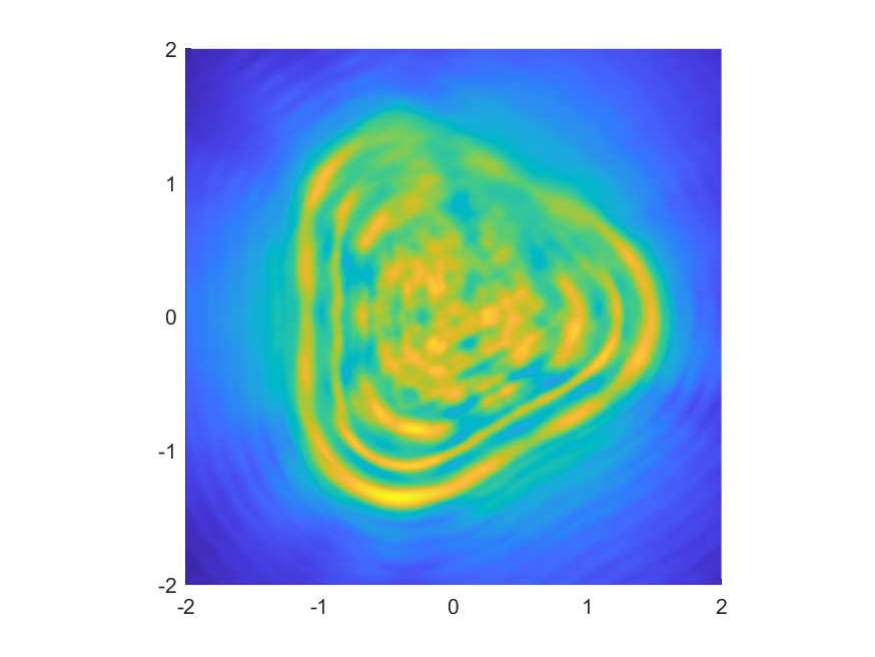}};
			
			\node[label] at (3,-4.5) {\includegraphics[width=3.5cm, keepaspectratio]{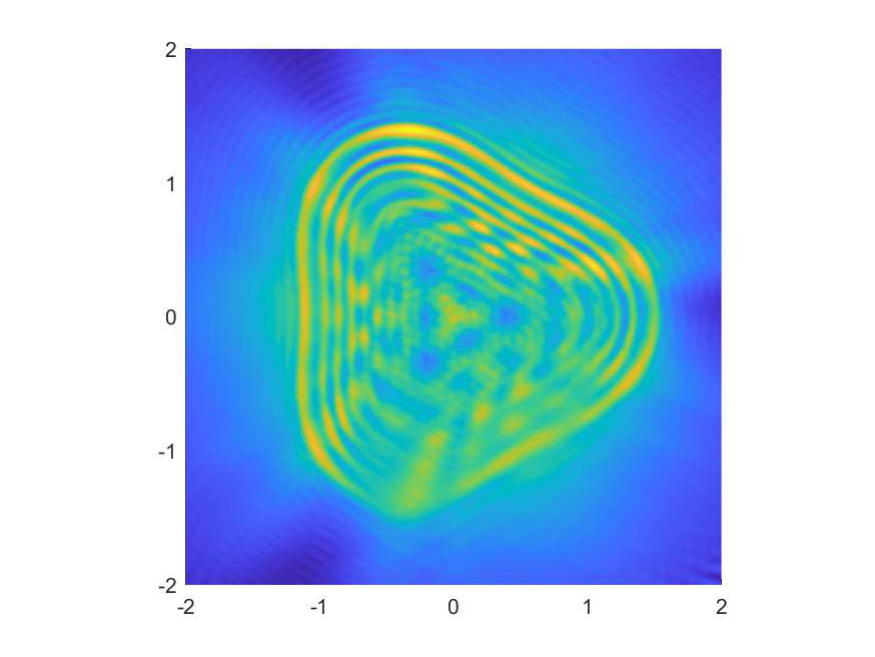}};
			\node[label] at (6,-4.5) {\includegraphics[width=3.5cm, keepaspectratio]{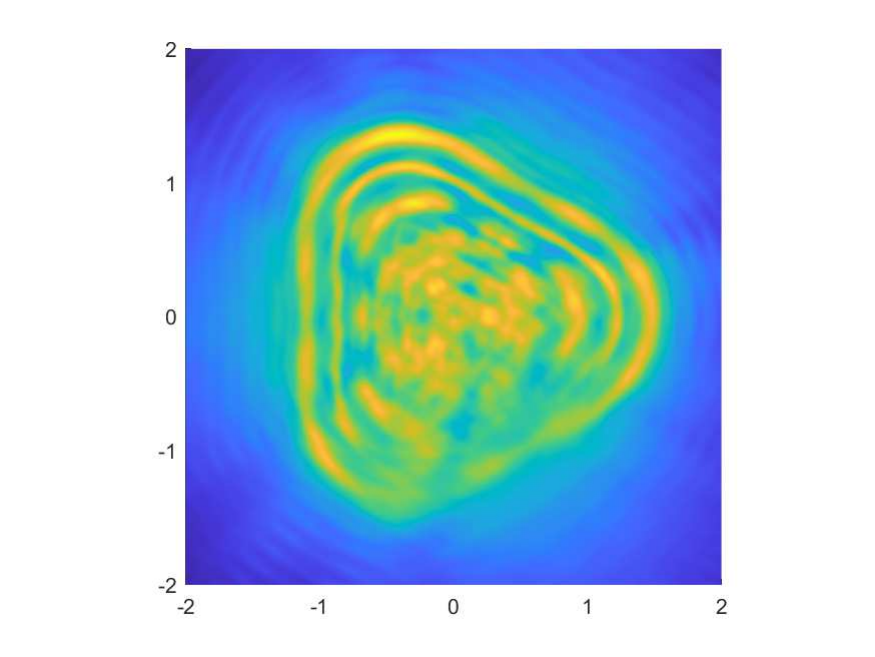}};

	\end{tikzpicture}}
	\caption{Limited-aperture reconstruction of the pear-shaped penetrable obstacle using indicators $\pmb{I}_{P,L}$, $\pmb{I}_{S,L}$ and $\pmb{I}_{F,L}$ with \(30\%\) noise level under half-circle incident direction. }\label{figure9}
\end{figure}

\subsection*{Example 3: Reconstructions with two disconnected obstacles.}

We now consider configurations involving two disconnected obstacles. Figure~\ref{figure5} shows the reconstruction of a pear-shaped penetrable obstacle together with a small rigid circle located at a large distance. Owing to the clear separation between the two components, all indicators \(\pmb{I}_P\), \(\pmb{I}_S\), and \(\pmb{I}_F\) successfully identify both obstacles and capture their individual shapes.

Figure~\ref{figure6} displays the case of a pear-shaped penetrable obstacle paired with a flower-shaped rigid obstacle of comparable size. The imaging results clearly distinguish the two components, and the main geometric features of each obstacle are well reproduced.

In contrast, Figure~\ref{figure7} illustrates a peanut-shaped penetrable obstacle and a rigid circle positioned in close proximity. Although the indicators still reveal the presence and approximate location of both obstacles, the reconstruction in the region where the two components approach each other becomes less accurate, and some boundary details are blurred or suppressed. Away from the interaction zone, however, the obstacle boundaries remain well resolved.

\begin{figure}[!htpb]
	\centering 
	\resizebox{0.9\textwidth}{!}{
		\begin{tikzpicture}[
			transform shape, 
			label/.style={font=\normalsize, anchor=center, align=center},
			leftlabel/.style={font=\normalsize, anchor=east, align=right}
			]
			
			\node[label] at (-1,0) {[0,$\pi/2$]};
			\node[label] at (1.9,0) {[$\pi/2$,$\pi$]};
			
			\node[label] at (4.9,0) {[$\pi$,$3\pi/2$]};
			\node[label] at (7.9,0) {[$3\pi/2$,$2\pi$]};

			\node[label] at (-3,0) {Indicator};
			\node[label] at (-3,-2) {$\pmb{I}_{P,L}$};
			\node[label] at (-3,-4.5){$\pmb{I}_{S,L}$};
			\node[label] at (-3,-7) {$\pmb{I}_{F,L}$};
			
			\node[label] at (-1,-2) {\includegraphics[width=3.4cm, keepaspectratio]{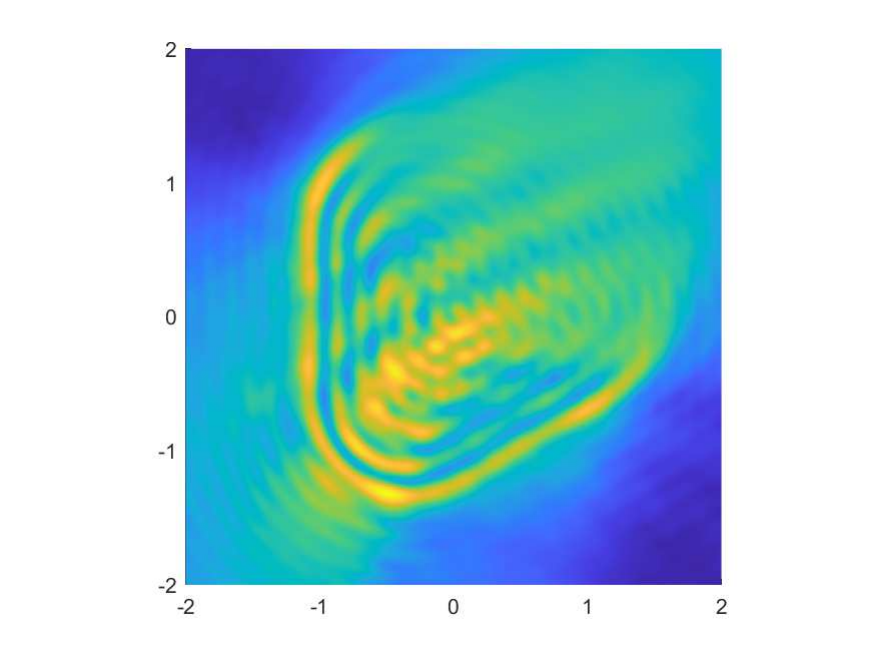}};
			\node[label] at (1.9,-2) {\includegraphics[width=3.4cm, keepaspectratio]{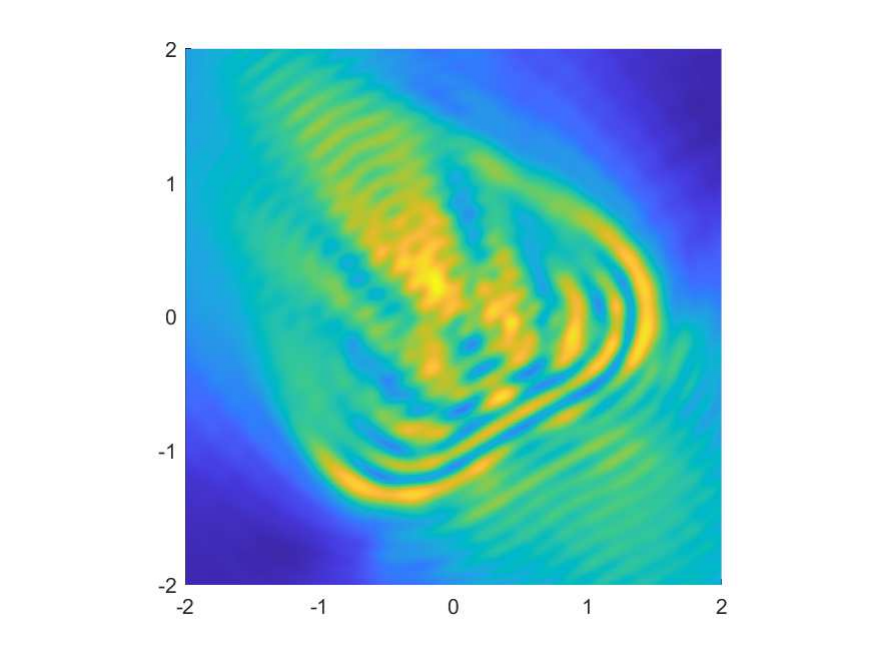}};
			\node[label] at (4.9,-2) {\includegraphics[width=3.4cm, keepaspectratio]{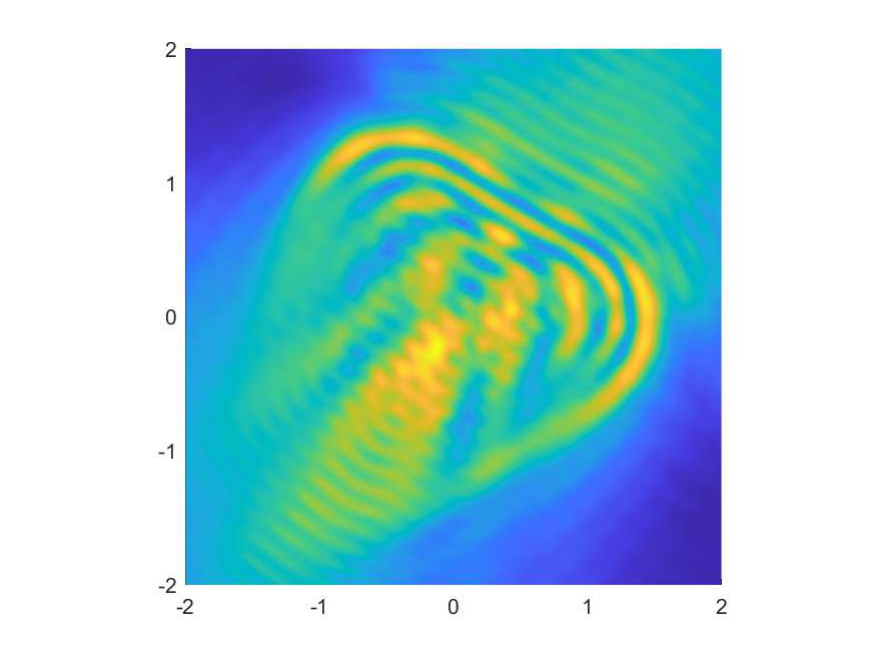}};
			\node[label] at (7.9,-2) {\includegraphics[width=3.4cm, keepaspectratio]{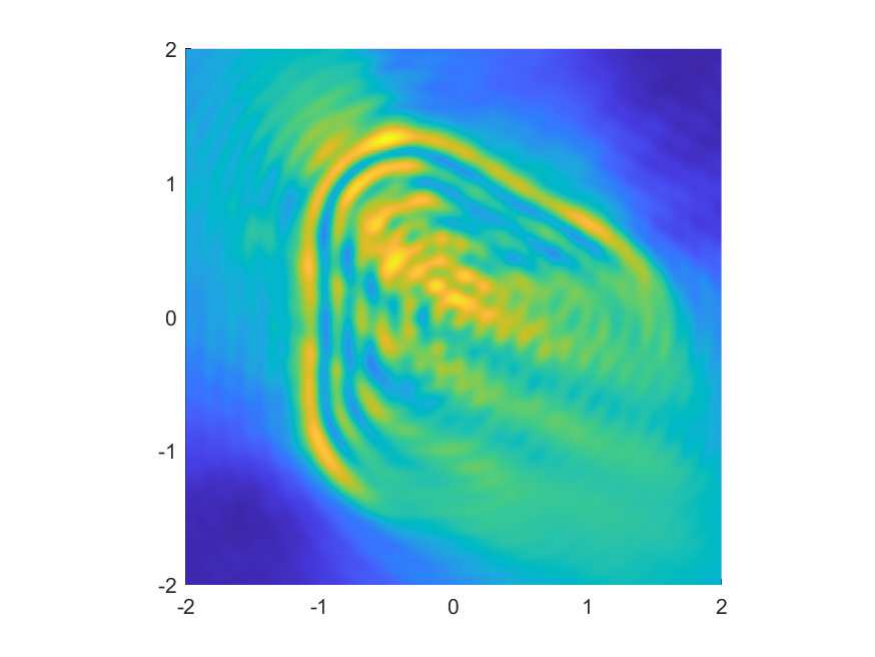}};

			\node[label] at (-1,-4.5) {\includegraphics[width=3.4cm, keepaspectratio]{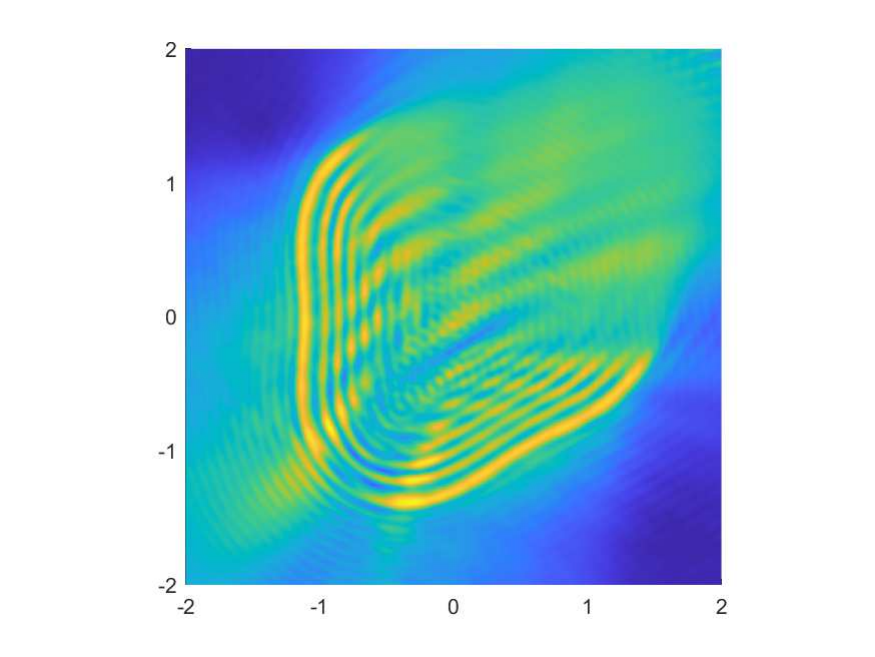}};
			\node[label] at (1.9,-4.5) {\includegraphics[width=3.4cm, keepaspectratio]{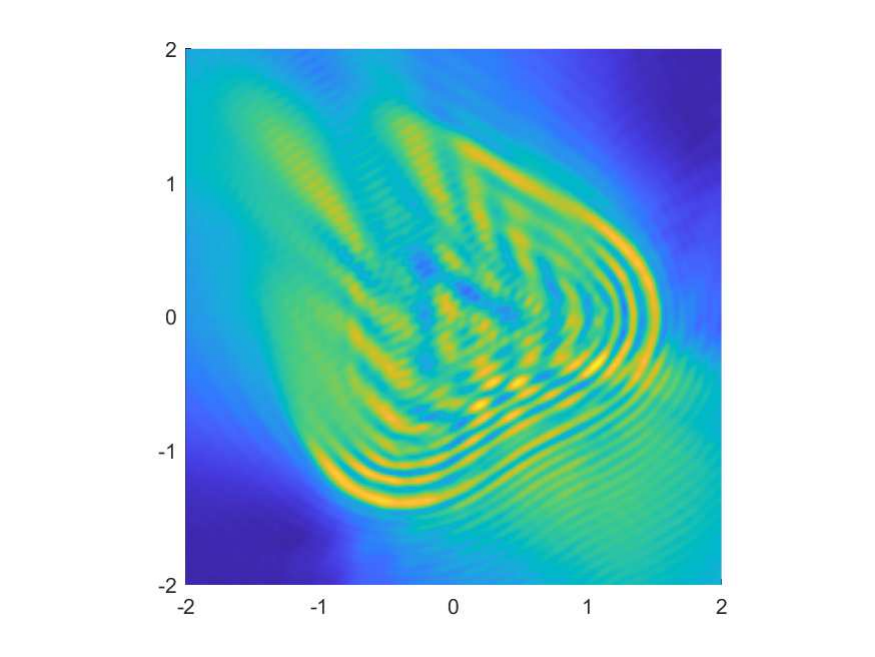}};
			\node[label] at (4.9,-4.5) {\includegraphics[width=3.4cm, keepaspectratio]{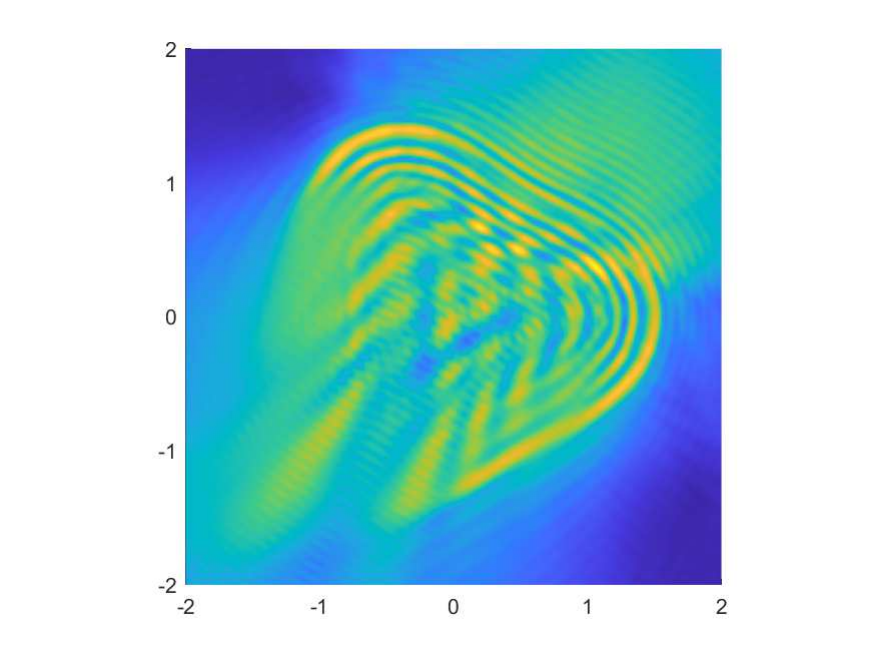}};
			\node[label] at (7.9,-4.5) {\includegraphics[width=3.4cm, keepaspectratio]{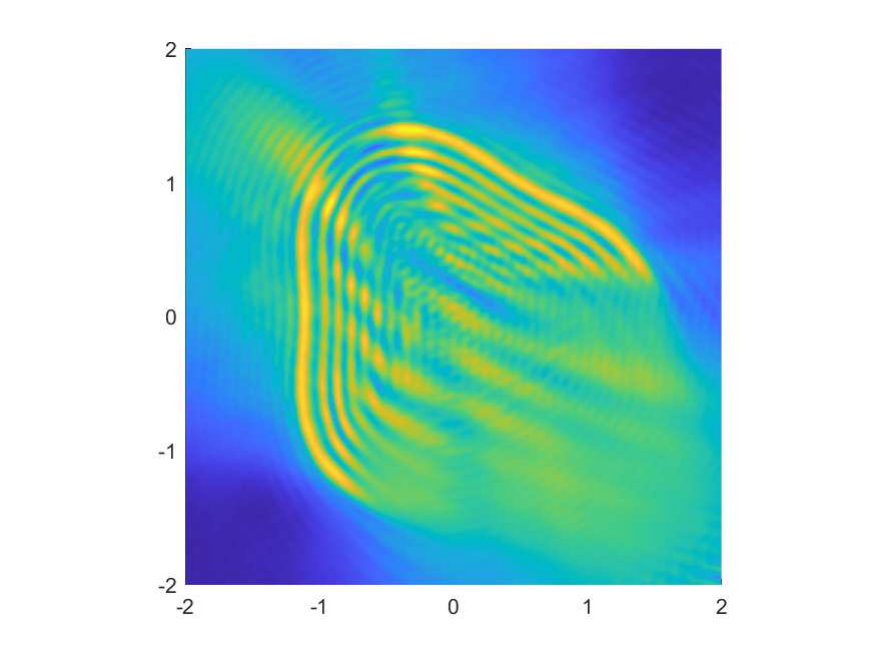}};

			\node[label] at (-1,-7) {\includegraphics[width=3.4cm, keepaspectratio]{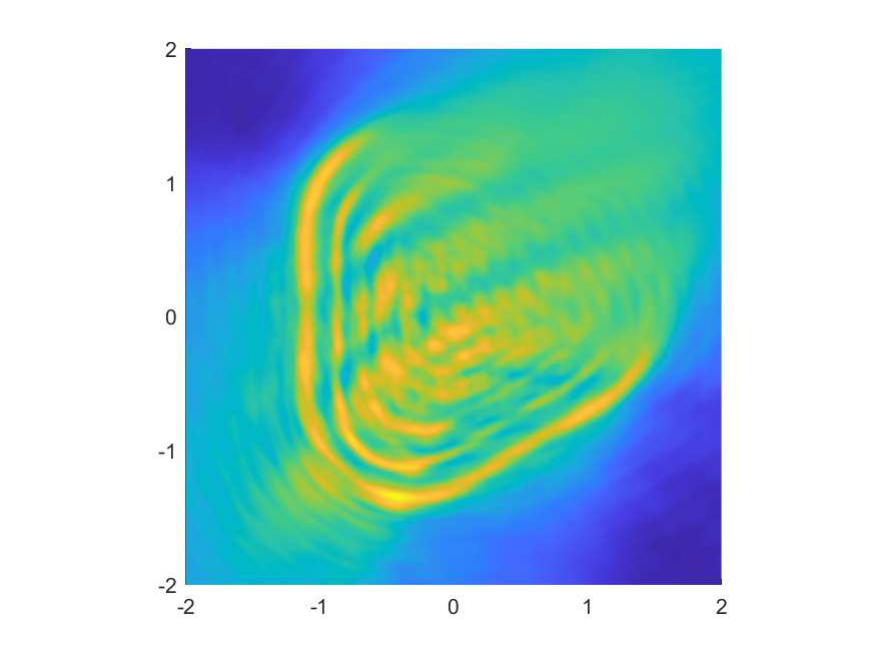}};
			\node[label] at (1.9,-7) {\includegraphics[width=3.4cm, keepaspectratio]{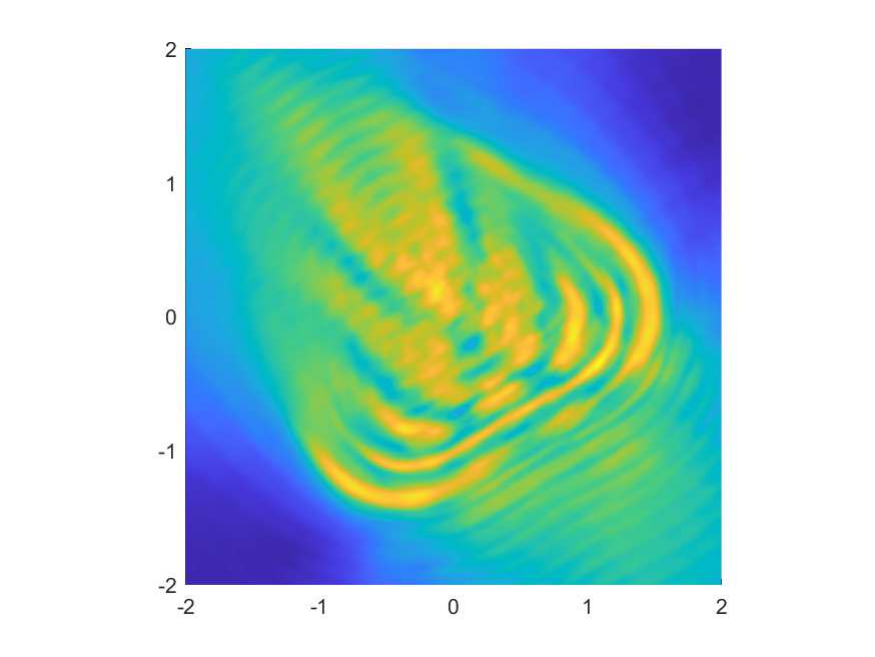}};
			\node[label] at (4.9,-7) {\includegraphics[width=3.4cm, keepaspectratio]{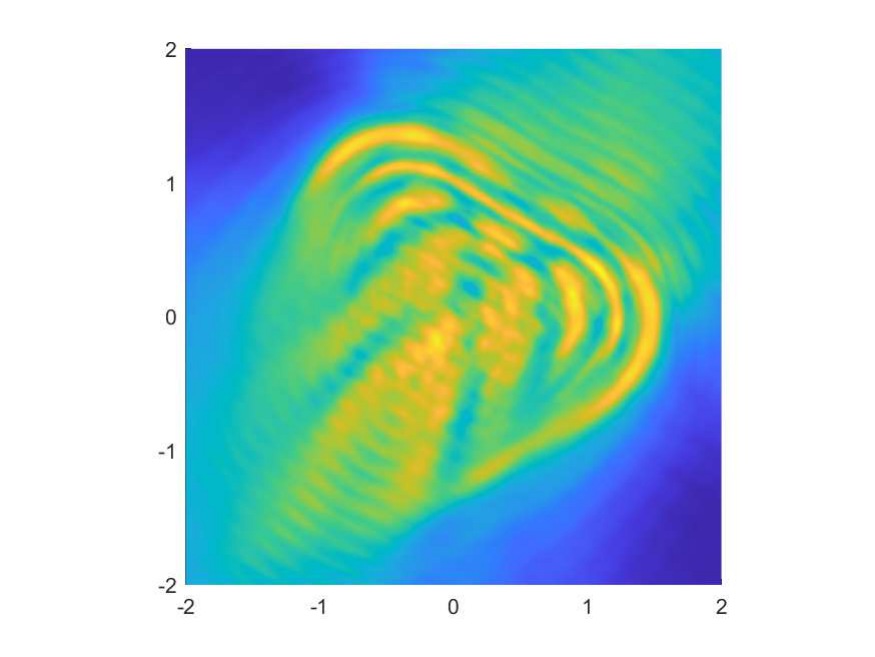}};
			\node[label] at (7.9,-7) {\includegraphics[width=3.4cm, keepaspectratio]{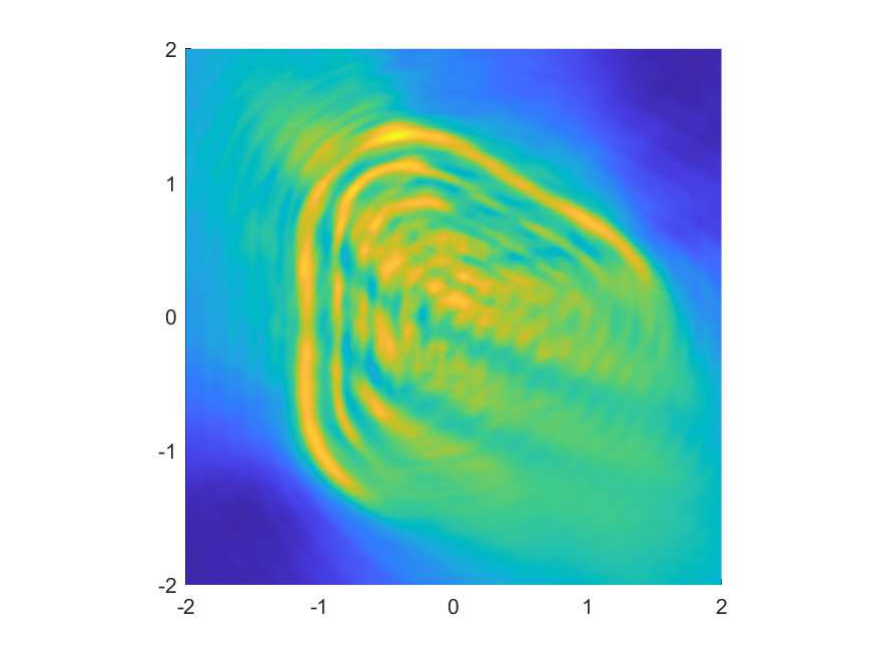}};

	\end{tikzpicture}}
	\caption{Limited-aperture reconstruction of the pear-shaped penetrable obstacle using indicators $\pmb{I}_{P,L}$, $\pmb{I}_{S,L}$ and $\pmb{I}_{F,L}$ with \(30\%\) noise level under quarter-circle incident direction. }\label{figure8}
\end{figure}

\begin{figure}[!htpb]
	\centering 
	\resizebox{0.85\textwidth}{!}{
		\begin{tikzpicture}[
			transform shape, 
			label/.style={font=\normalsize, anchor=center, align=center},
			leftlabel/.style={font=\normalsize, anchor=east, align=right}
			]
			
			\node[label] at (-3,0) {Observation\\Range};
			\node[label] at (0,0) {$\pmb{I}_{P,L}$};
			\node[label] at (3,0){$\pmb{I}_{S,L}$};
			\node[label] at (6,0) {$\pmb{I}_{F,L}$};
			
			\node[label] at (-3,-2) {[0,$\pi$]};
			\node[label] at (-3,-4.5) {[$\pi$,$2\pi$]};
			
			\node[label] at (0,-2) {\includegraphics[width=3.5cm, keepaspectratio]{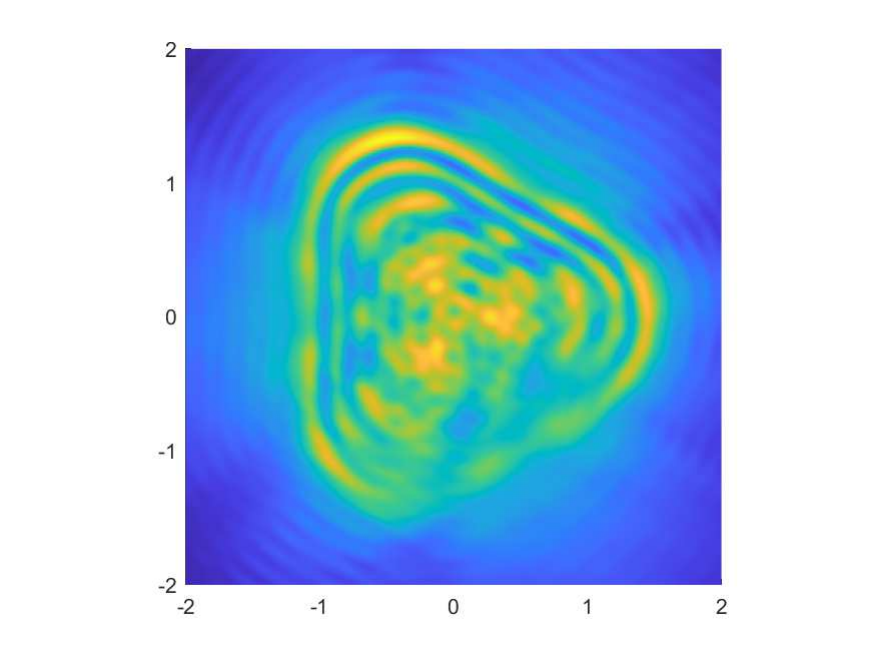}};
			\node[label] at (0,-4.5) {\includegraphics[width=3.5cm, keepaspectratio]{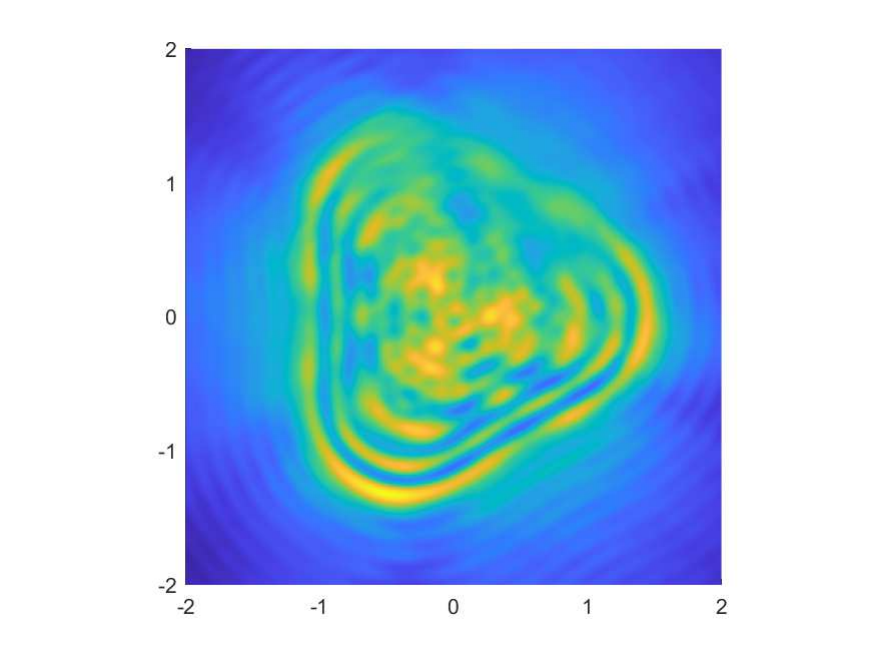}};
			
			\node[label] at (3,-2) {\includegraphics[width=3.5cm, keepaspectratio]{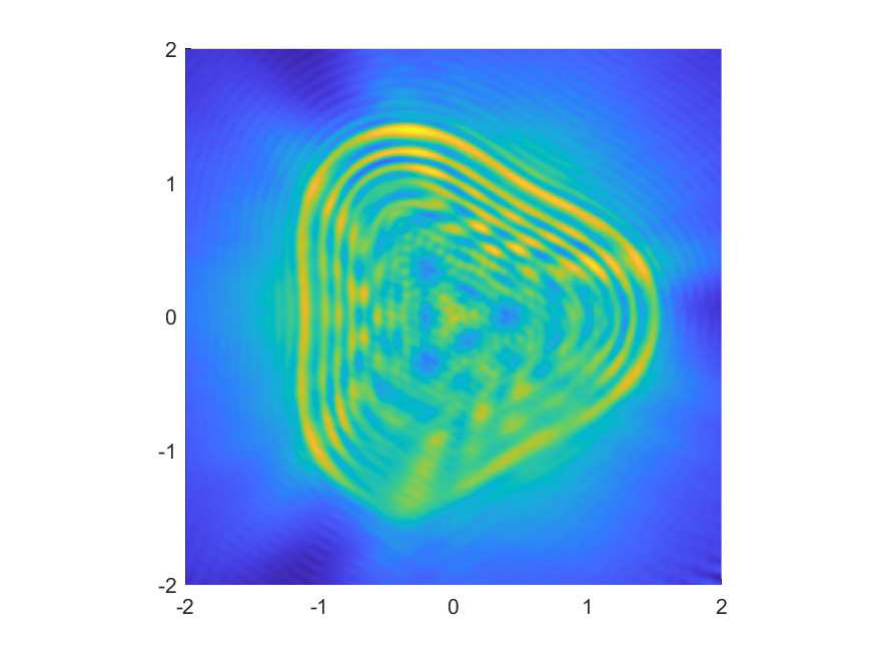}};
			\node[label] at (6,-2) {\includegraphics[width=3.5cm, keepaspectratio]{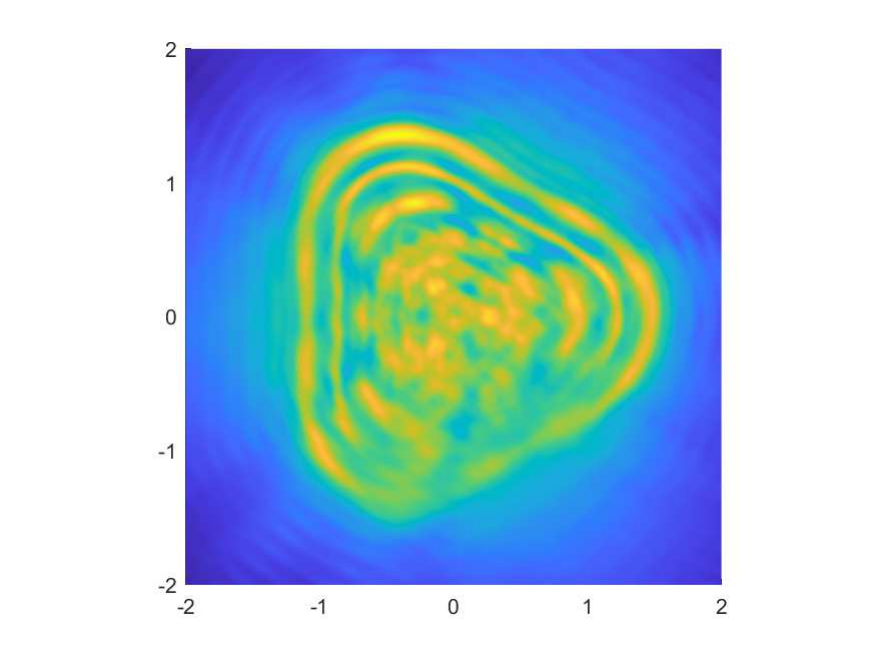}};
			
			\node[label] at (3,-4.5) {\includegraphics[width=3.5cm, keepaspectratio]{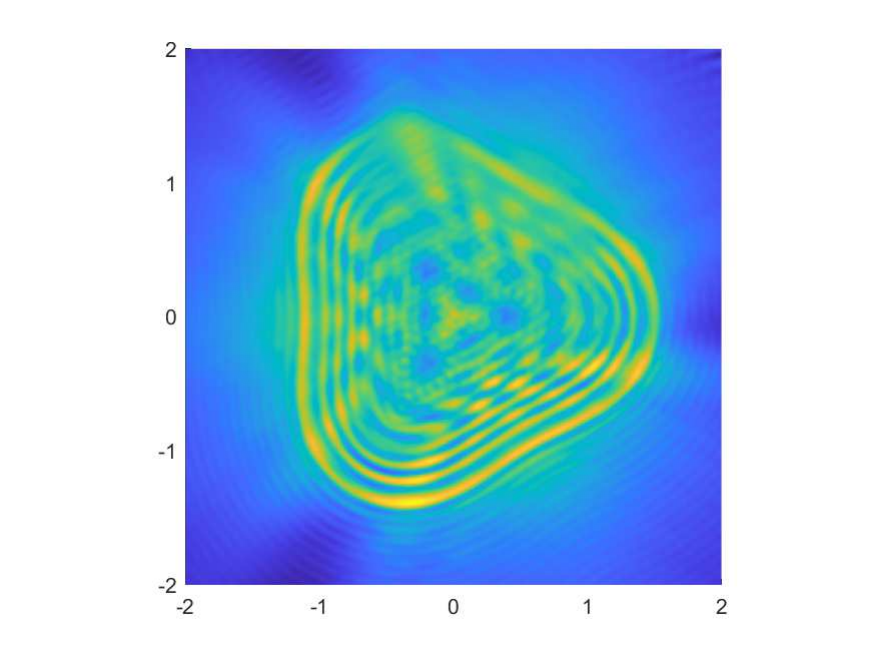}};
			\node[label] at (6,-4.5) {\includegraphics[width=3.5cm, keepaspectratio]{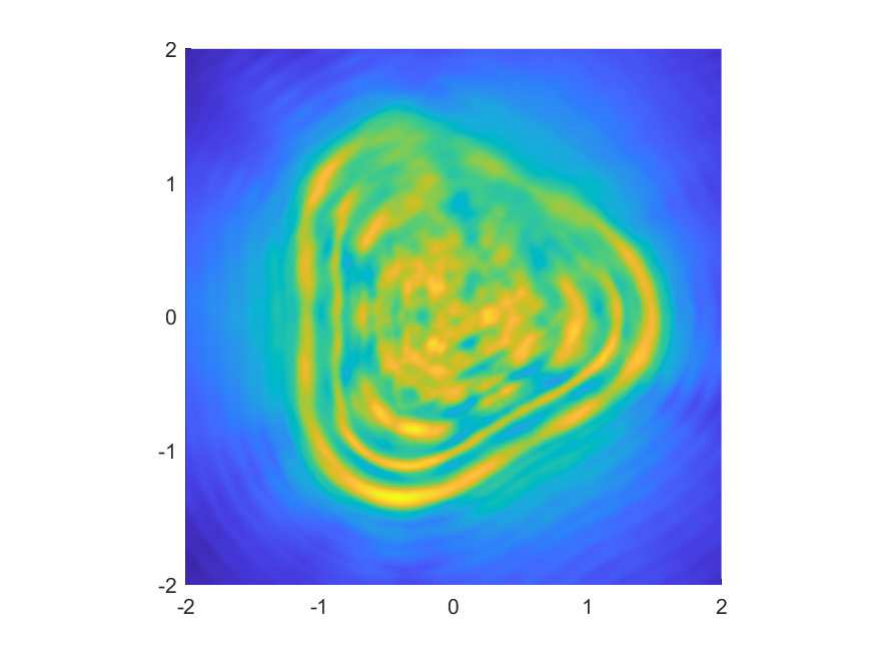}};
			
	\end{tikzpicture}}
	\caption{Limited-aperture reconstruction of the pear-shaped penetrable obstacle using indicators $\pmb{I}_{P,L}$, $\pmb{I}_{S,L}$ and $\pmb{I}_{F,L}$ with 30$\%$ noise level under half-circle observation direction. }\label{figure11}
\end{figure}

\subsection*{Example 4: Reconstructions with limited-aperture data.}

In this experiment, we investigate the inverse problem with limited-aperture data. We consider two scenarios: restricted incidence directions and restricted observation directions. 

First, with the observation directions fixed over the full range \([0, 2\pi]\), the incidence directions are restricted to specific intervals. Figure~\ref{figure9} presents reconstructions using the limited-aperture indicators \(\pmb{I}_{P,L}\), \(\pmb{I}_{S,L}\), and \(\pmb{I}_{F,L}\) over the incidence ranges \([0, \pi]\) and \([\pi, 2\pi]\). Figure~\ref{figure8} further shows reconstructions for narrower incidence ranges: \([0, \pi/2]\), \([\pi/2, \pi]\), \([\pi, 3\pi/2]\), and \([3\pi/2, 2\pi]\).

Second, for incident directions fixed over the full range \([0, 2\pi]\), figure~\ref{figure11} and figure~\ref{figure10} present reconstructions over a half-circle and a quarter-circle of observation directions, respectively, using the limited-aperture indicators \(\pmb{I}_{P,L}\), \(\pmb{I}_{S,L}\), and \(\pmb{I}_{F,L}\).

The results from both scenarios demonstrate that as the aperture decreases, the overall reconstruction of the obstacle becomes less precise. However, the boundary segments aligned with the available directions can be well recovered.

\begin{figure}[!htpb]
	\centering 
	\resizebox{0.85\textwidth}{!}{
		\begin{tikzpicture}[
			transform shape, 
			label/.style={font=\normalsize, anchor=center, align=center},
			leftlabel/.style={font=\normalsize, anchor=east, align=right}
			]
			
			\node[label] at (-1,0) {[0,$\pi/2$]};
			\node[label] at (1.9,0) {[$\pi/2$,$\pi$]};
			
			\node[label] at (4.9,0) {[$\pi$,$3\pi/2$]};
			\node[label] at (7.9,0) {[$3\pi/2$,$2\pi$]};

			\node[label] at (-3,0) {Indicator};
			\node[label] at (-3,-2) {$\pmb{I}_{P,L}$};
			\node[label] at (-3,-4.5){$\pmb{I}_{S,L}$};
			\node[label] at (-3,-7) {$\pmb{I}_{F,L}$};
			
			\node[label] at (-1,-2) {\includegraphics[width=3.4cm, keepaspectratio]{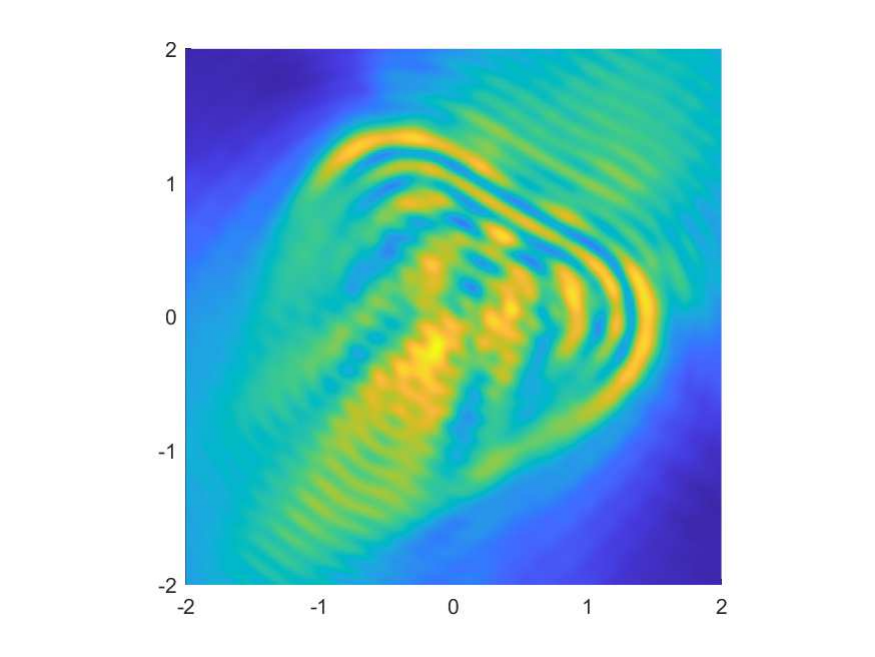}};
			\node[label] at (1.9,-2) {\includegraphics[width=3.4cm, keepaspectratio]{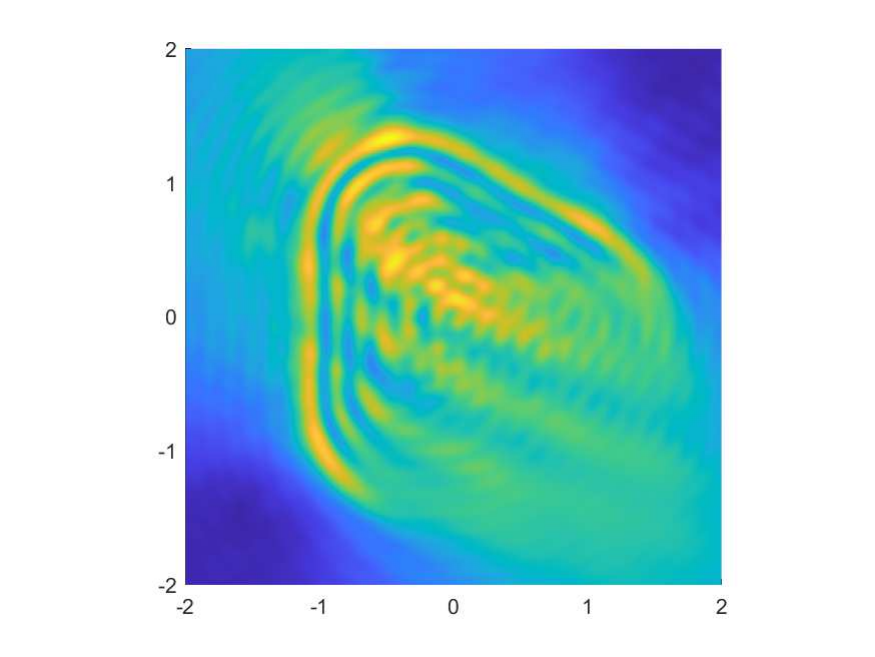}};
			\node[label] at (4.9,-2) {\includegraphics[width=3.4cm, keepaspectratio]{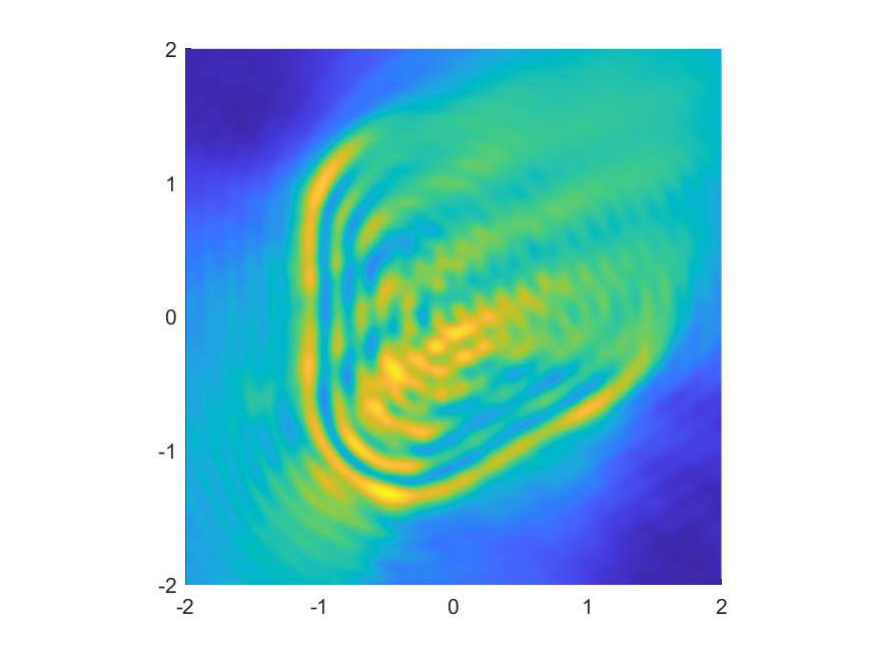}};
			\node[label] at (7.9,-2) {\includegraphics[width=3.4cm, keepaspectratio]{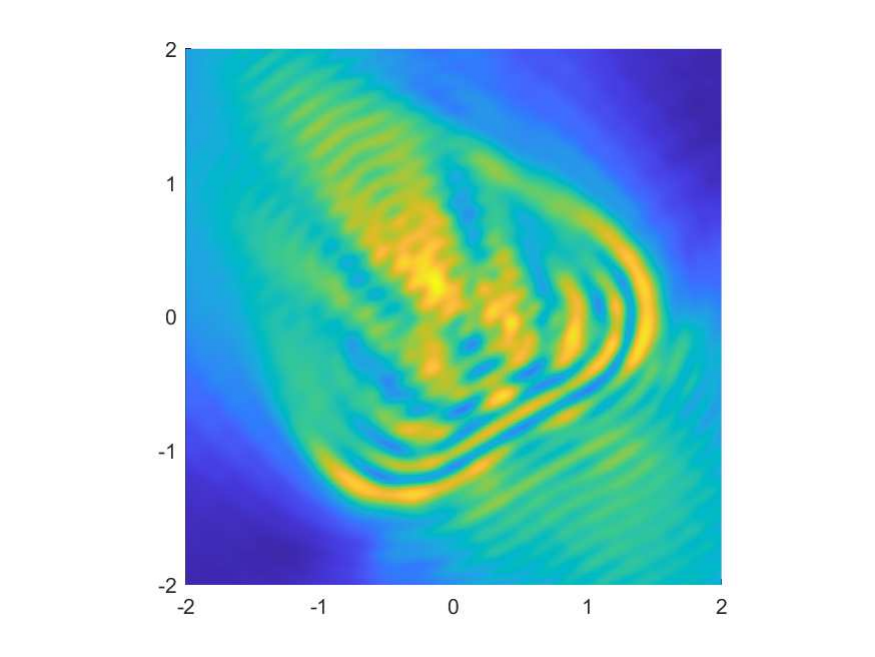}};

			\node[label] at (-1,-4.5) {\includegraphics[width=3.4cm, keepaspectratio]{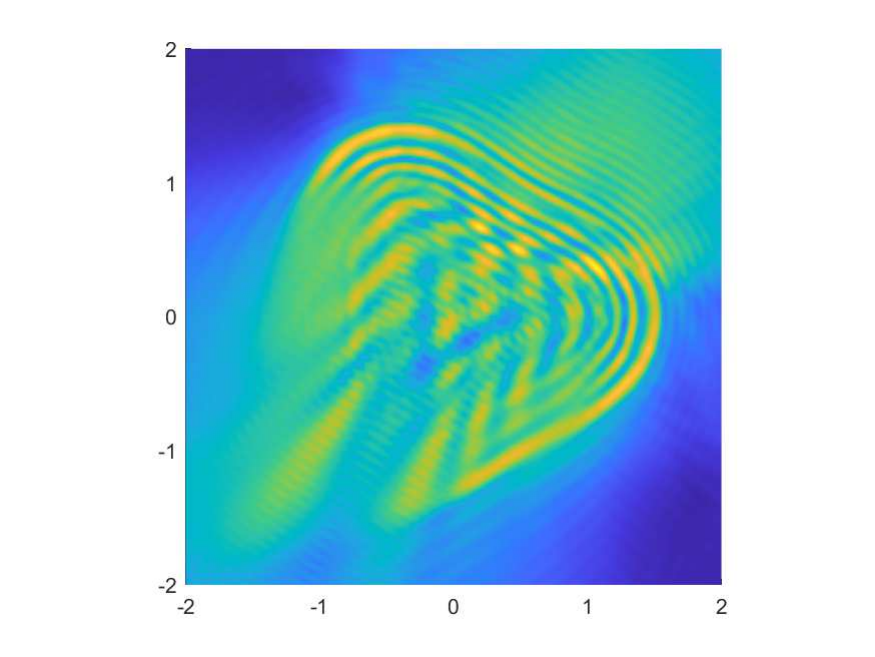}};
			\node[label] at (1.9,-4.5) {\includegraphics[width=3.4cm, keepaspectratio]{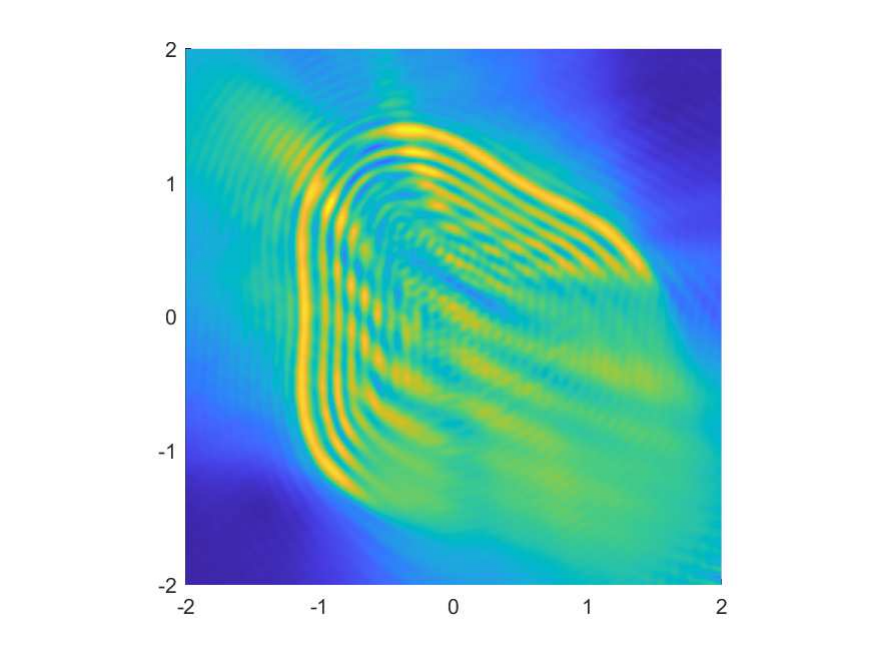}};
			\node[label] at (4.9,-4.5) {\includegraphics[width=3.4cm, keepaspectratio]{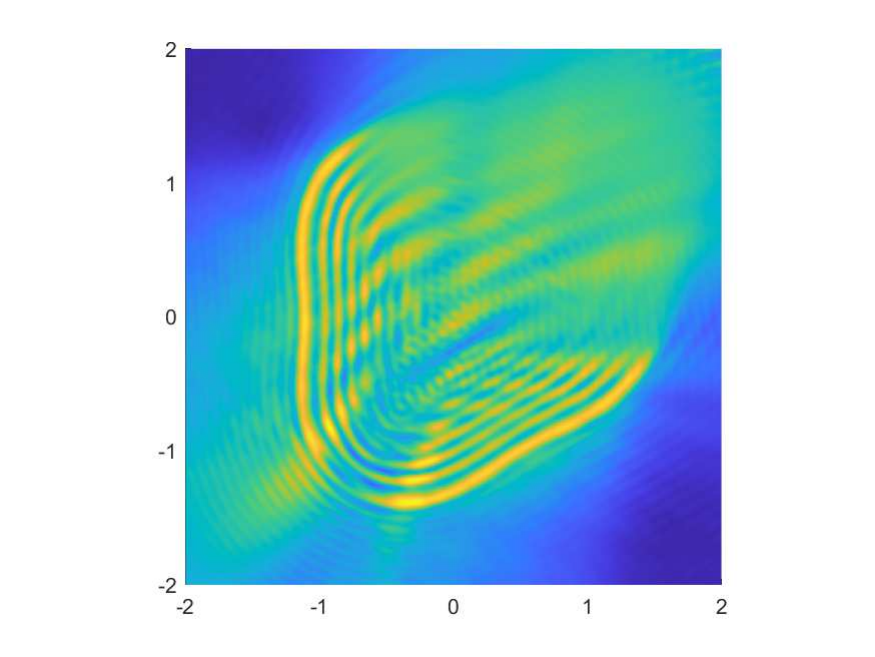}};
			\node[label] at (7.9,-4.5) {\includegraphics[width=3.4cm, keepaspectratio]{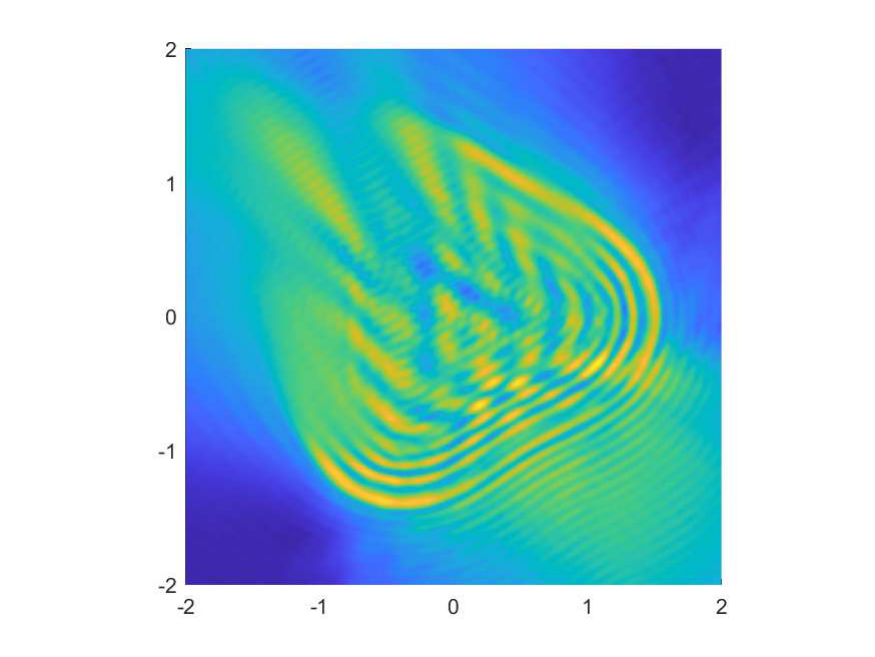}};

			\node[label] at (-1,-7) {\includegraphics[width=3.4cm, keepaspectratio]{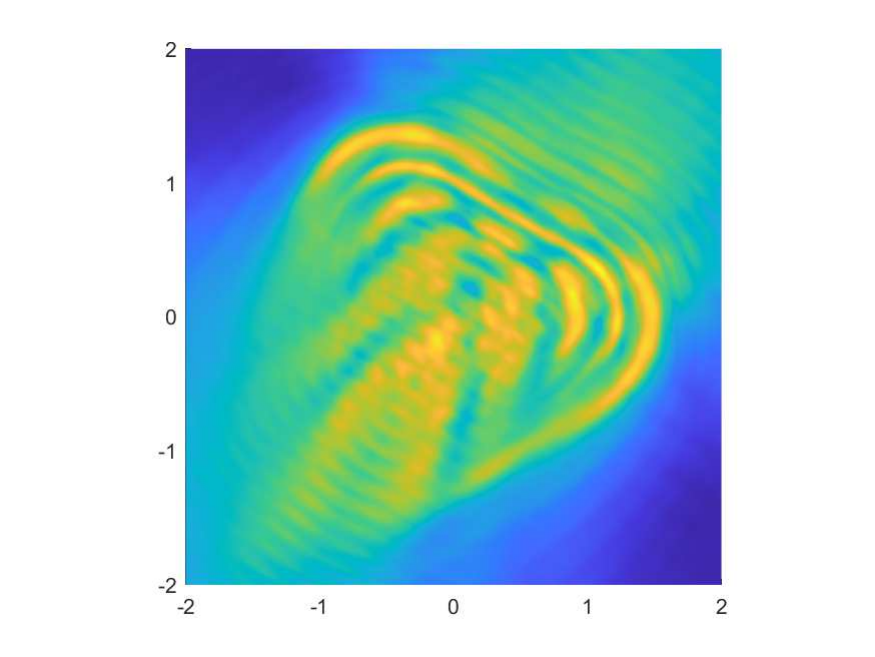}};
			\node[label] at (1.9,-7) {\includegraphics[width=3.4cm, keepaspectratio]{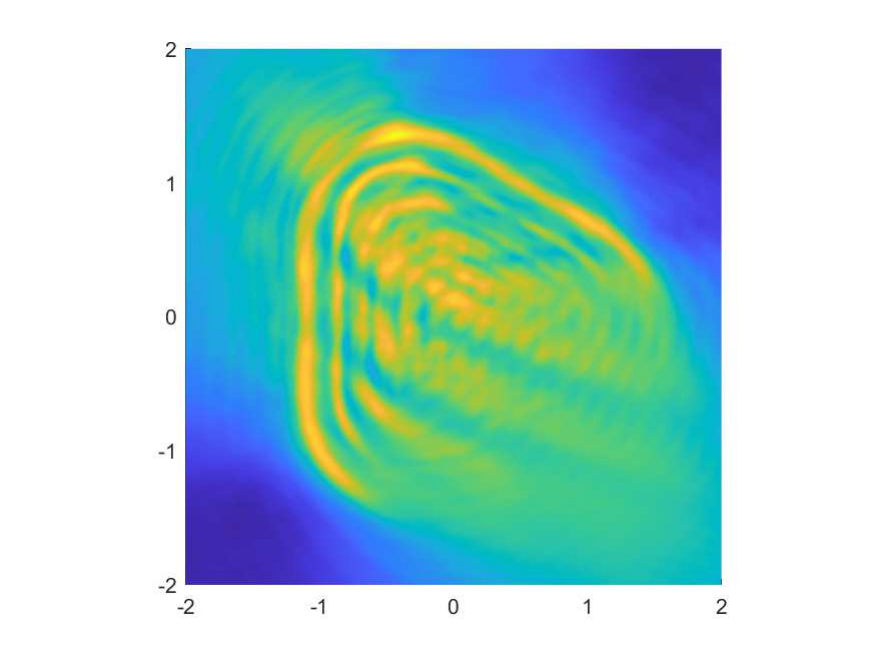}};
			\node[label] at (4.9,-7) {\includegraphics[width=3.4cm, keepaspectratio]{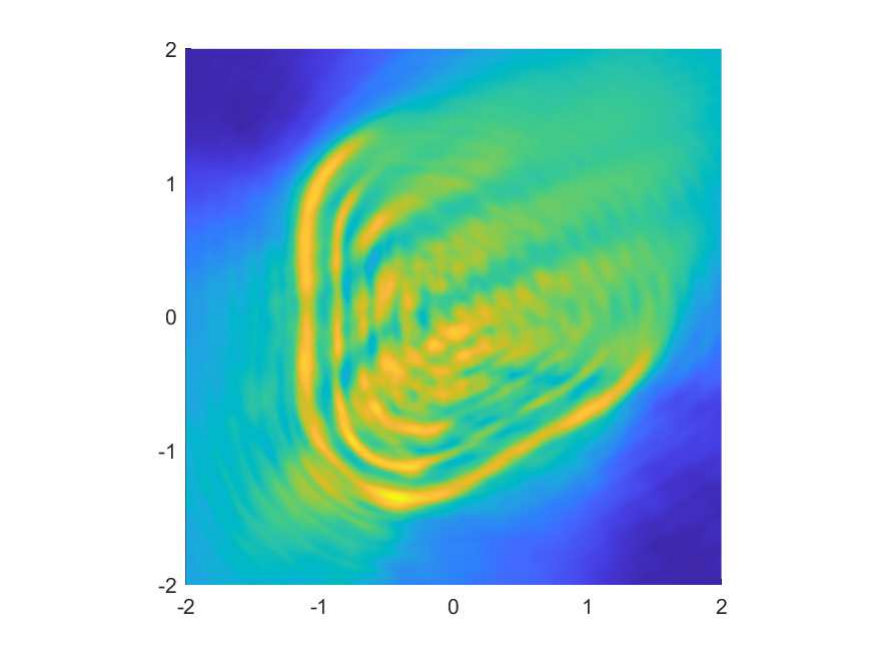}};
			\node[label] at (7.9,-7) {\includegraphics[width=3.4cm, keepaspectratio]{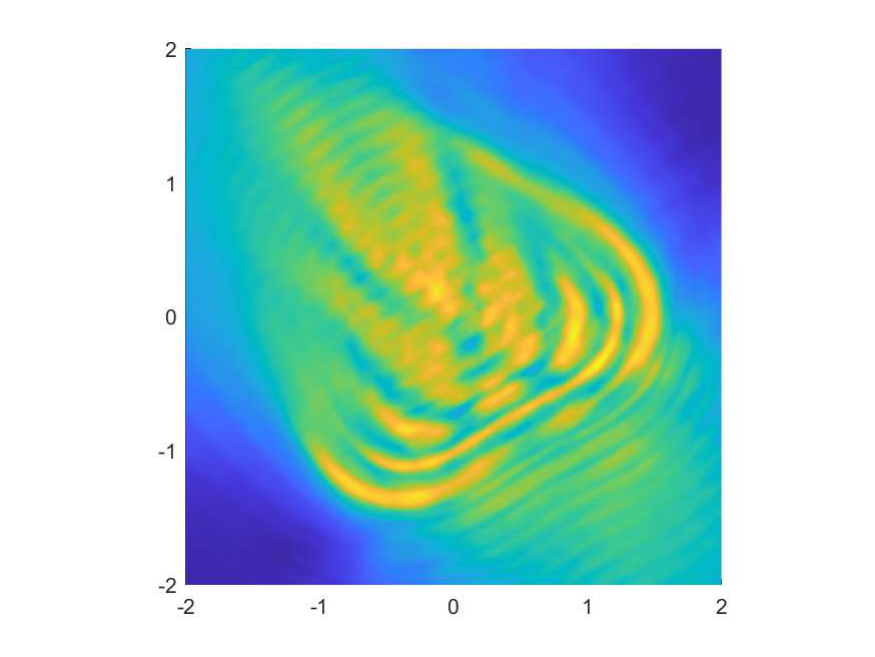}};

	\end{tikzpicture}}
	\caption{Limited-aperture reconstruction of the pear-shaped penetrable obstacle using indicators $\pmb{I}_{P,L}$, $\pmb{I}_{S,L}$ and $\pmb{I}_{F,L}$ with \(30\%\) noise level under quarter-circle observation direction. }\label{figure10}
\end{figure}

\section{Conclusions}

This work has considered the two-dimensional inverse elastic scattering problem of reconstructing a penetrable obstacle from far-field data. For the forward problem, the coupled Navier equations are reformulated as a system of scalar Helmholtz equations via the Helmholtz decomposition. The uniqueness of the associated coupled Helmholtz boundary value problem and of the corresponding boundary integral equation system is established, and an efficient Nystr\"{o}m-type discretization method is developed for the numerical solution of the resulting system. For the inverse problem, by leveraging the relation between the compressional and shear far-field patterns of the Navier system and those of the coupled Helmholtz system, we employ three efficient and robust indicators to reconstruct the location and shape of the penetrable obstacle. Furthermore, we derive the decay properties of these indicators for sampling points away from the obstacle and establish corresponding stability estimates. Numerical experiments demonstrate the effectiveness and robustness of the proposed method.

\section*{Acknowledgements}
The work of L.~Zhao is supported in part by the NSFC Grant 12401565 and CAUC Supporting Grant 3122025PT08.

%
%
%

\appendix
\counterwithin*{equation}{section}
\renewcommand{\theequation}{\thesection\arabic{equation}}

\section{Coupled Boundary Integral Equations for a Two-Obstacle Configuration}\label{appendix a}

Let $D_1, D_2 \subset \mathbb{R}^2$ be two disjoint bounded domains with smooth boundaries
$\partial D_1$ and $\partial D_2$, where $D_1$ is a penetrable elastic obstacle and
$D_2$ is a rigid obstacle. Let
\[
\pmb{U}_1 \in (C^2(D_1))^2 \cap (C^1(\overline{D_1}))^2,
\qquad
\pmb{U}_2 \in \bigl(C^2(\mathbb{R}^2\setminus \overline{D_1\cup D_2})\bigr)^2
\cap \bigl(C^1(\mathbb{R}^2\setminus (D_1\cup D_2))\bigr)^2
\]
be the displacement fields in $D_1$ and $\mathbb{R}^2\setminus \overline{D_1\cup D_2}$, respectively.
Then the boundary conditions are given by
\begin{equation}\label{bc2}
\begin{cases}
\pmb{U}_1=\pmb{U}_2
& \text{on } \partial D_1,\\[6pt]
T_{\pmb \nu_{D_1},1}(\pmb{U}_1)=T_{\pmb \nu_{D_1},2}(\pmb{U}_2)
& \text{on } \partial D_1,\\[6pt]
\pmb{U}_2=0
& \text{on } \partial D_2,
\end{cases}
\end{equation}
where $\pmb{\nu}_{D_1}$ and $\pmb{\tau}_{D_1}$ denote the unit exterior normal vector and the corresponding unit tangent vector on $\partial D_1$, respectively. Moreover, $\pmb{\nu}_{D_2}$ and $\pmb{\tau}_{D_2}$ denote the unit exterior normal vector and the corresponding unit tangent vector on $\partial D_2$, respectively.

Let $\pmb{v}:=\pmb{U}_2-\pmb{u}$ be the scattered field in the exterior domain.
Applying the Helmholtz decomposition to $\pmb{U}_1$ and $\pmb{v}$, we write
\[
\pmb{U}_1=\nabla\phi_1+\pmb{\operatorname{curl}}\psi_{1},
\qquad
\pmb v=\nabla \phi_2+\pmb{\operatorname{curl}}\psi_{2}.
\]
Substituting these representations into the boundary conditions \eqref{bc2}, then taking the inner products of the first two equations in \eqref{bc2} with $\pmb{\nu}_{D_1}$ and $\pmb{\tau}_{D_1}$ on $\partial D_1$, respectively, and taking the inner products of the third equation in \eqref{bc2} with $\pmb{\nu}_{D_2}$ and $\pmb{\tau}_{D_2}$ on $\partial D_2$, we obtain the following coupled boundary value problem:
\begin{equation}\label{bc3}
\left\{
\begin{array}{@{}l l@{}}
\partial_{\pmb{\nu}_{D_1}}\phi_{1} + \partial_{\pmb{\tau}_{D_1}}\psi_{1}
- \partial_{\pmb{\nu}_{D_1}}\phi_{2} - \partial_{\pmb{\tau}_{D_1}}\psi_{2}
= f_{1},
& \qquad \text{on } \partial D_1,\\[3pt]

\partial_{\pmb{\tau}_{D_1}}\phi_{1} - \partial_{\pmb{\nu}_{D_1}}\psi_{1}
- \partial_{\pmb{\tau}_{D_1}}\phi_{2} + \partial_{\pmb{\nu}_{D_1}}\psi_{2}
= f_{2},
& \qquad \text{on } \partial D_1,\\[3pt]
2\bigl(\mu_{1}\pmb{\nu}_{D_1}\cdot\partial_{\pmb{\nu}_{D_1}}\nabla\phi_{1}
+\mu_{1}\pmb{\nu}_{D_1}\cdot\partial_{\pmb{\nu}_{D_1}}\pmb{\operatorname{curl}}\psi_{1}\bigr)
-\lambda_{1}\kappa_{a}^{2}\phi_{1}\\[3pt]
\qquad
-2\bigl(\mu_{2}\pmb{\nu}_{D_1}\cdot\partial_{\pmb{\nu}_{D_1}}\nabla\phi_{2}
+\mu_{2}\pmb{\nu}_{D_1}\cdot\partial_{\pmb{\nu}_{D_1}}\pmb{\operatorname{curl}}\psi_{2}\bigr)
+\lambda_{2}\kappa_{p}^{2}\phi_{2}
= f_{3}, &\qquad \text{on } \partial D_1,\\[3pt]
2\bigl(\mu_{1}\pmb{\tau}_{D_1}\cdot\partial_{\pmb{\nu}_{D_1}}\nabla\phi_{1}
+\mu_{1}\pmb{\tau}_{D_1}\cdot\partial_{\pmb{\nu}_{D_1}}\pmb{\operatorname{curl}}\psi_{1}\bigr)
-\mu_{1}\kappa_{b}^{2}\psi_{1}\\[3pt]
\qquad
-2\bigl(\mu_{2}\pmb{\tau}_{D_1}\cdot\partial_{\pmb{\nu}_{D_1}}\nabla\phi_{2}
+\mu_{2}\pmb{\tau}_{D_1}\cdot\partial_{\pmb{\nu}_{D_1}}\pmb{\operatorname{curl}}\psi_{2}\bigr)
+\mu_{2}\kappa_{s}^{2}\psi_{2}
= f_{4}, & \qquad \text{on } \partial D_1,\\[3pt]

\partial_{\pmb{\nu}_{D_2}}\phi_{2}+\partial_{\pmb{\tau}_{D_2}}\psi_{2}=f_{5},
& \qquad \text{on } \partial D_2,\\[3pt]

\partial_{\pmb{\tau}_{D_2}}\phi_{2}-\partial_{\pmb{\nu}_{D_2}}\psi_{2}=f_{6},
& \qquad \text{on } \partial D_2.
\end{array}
\right.
\end{equation}
where
\[
f_{1}=\pmb{\nu}_{D_1}\cdot\pmb{u},\qquad
f_{2}=\pmb{\tau}_{D_1}\cdot\pmb{u},\qquad
f_{3}=\pmb{\nu}_{D_1}\cdot T_{\pmb \nu_{D_1},2}(\pmb{u}),\qquad
f_{4}=\pmb{\tau}_{D_1}\cdot T_{\pmb \nu_{D_1},2}(\pmb{u}),
\]
\[
f_{5}=-\pmb{\nu}_{D_2}\cdot\pmb{u},\qquad
f_{6}=-\pmb{\tau}_{D_2}\cdot\pmb{u}.
\]

We seek solutions of \eqref{bc3} in the following single-layer potential form:
\begin{align}
&\phi_1(\pmb{x})
= \int_{\partial D_1}\varPhi(\pmb{x},\pmb{y};\kappa_{a})g_{1,D_1}(\pmb{y})\mathrm{d}s(\pmb{y}),
\qquad \pmb{x}\in D_1,\notag\\
\label{single3}
&\psi_1(\pmb{x})
= \int_{\partial D_1}\varPhi(\pmb{x},\pmb{y};\kappa_{b})g_{2,D_1}(\pmb{y})\mathrm{d}s(\pmb{y}),
\qquad \pmb{x}\in D_1,\\
&\phi_2(\pmb{x})
= \sum_{\sigma}\int_{\partial \sigma}\varPhi(\pmb{x},\pmb{y};\kappa_{p})g_{3,\sigma}(\pmb{y})\mathrm{d}s(\pmb{y}),
\qquad \pmb{x}\in\mathbb{R}^2\setminus\overline{D_1\cup D_2},\notag\\
\label{single4}
&\psi_2(\pmb{x})
= \sum_{\sigma}\int_{\partial \sigma}\varPhi(\pmb{x},\pmb{y};\kappa_{s})g_{4,\sigma}(\pmb{y})\mathrm{d}s(\pmb{y}),
\qquad \pmb{x}\in\mathbb{R}^2\setminus\overline{D_1\cup D_2},
\end{align}
where the summation is taken over $\sigma\in\{D_1,D_2\}$. The densities satisfy
$g_{i,D_1}\in C^{1,\alpha}(\partial D_1)$ for $i=1,\ldots,4$ and
$g_{j,D_2}\in C^{1,\alpha}(\partial D_2)$ for $j=3,4$, with $0<\alpha<1$.

The boundary integral operators are defined by
\begin{equation*}
\begin{aligned}
S_{\kappa}^{\sigma,\chi}[g](\pmb{x})
&=\int_{\partial \sigma}\varPhi(\pmb{x},\pmb{y};\kappa)g(\pmb{y})\mathrm{d}s(\pmb{y}),
\qquad \pmb{x}\in\partial \chi,\\
N_{\kappa}^{\sigma,\chi}[g](\pmb{x})
&=\int_{\partial \sigma}\frac{\partial}{\partial\pmb{\nu}_{\chi}(\pmb{x})}
\varPhi(\pmb{x},\pmb{y};\kappa)g(\pmb{y})\mathrm{d}s(\pmb{y}),
\qquad \pmb{x}\in\partial \chi,\\
T_{\kappa}^{\sigma,\chi}[g](\pmb{x})
&=\int_{\partial \sigma}\frac{\partial}{\partial\pmb{\tau}_{\chi}(\pmb{x})}
\varPhi(\pmb{x},\pmb{y};\kappa)g(\pmb{y})\mathrm{d}s(\pmb{y}),
\qquad \pmb{x}\in\partial \chi,
\end{aligned}
\end{equation*}
where $\chi\in\{D_1,D_2\}$.

Next, we take the limit in \eqref{single3} as $\pmb{x}$ approaches $\partial D_1$ from the interior of $D_1$,
and in \eqref{single4} as $\pmb{x}$ approaches $\partial D_1$ and $\partial D_2$ from the exterior domain
$\mathbb{R}^2\setminus\overline{D_1\cup D_2}$.
Using the jump relations together with the boundary conditions in \eqref{bc2},
we arrive at the following coupled boundary integral equations on $\partial D_1$ and $\partial D_2$:

\begin{align*}
f_1 &= g_{1,D_1}/2 + N_{\kappa_{a}}^{D_1,D_1}[g_{1,D_1}]
+ T_{\kappa_{b}}^{D_1,D_1}[g_{2,D_1}]
+ g_{3,D_1}/2 - \sum_{\sigma}N_{\kappa_{p}}^{\sigma,D_1}[g_{3,\sigma}]
- \sum_{\sigma}T_{\kappa_{s}}^{\sigma,D_1}[g_{4,\sigma}],\\
f_2 &= T_{\kappa_{a}}^{D_1,D_1}[g_{1,D_1}] - g_{2,D_1}/2
- N_{\kappa_{b}}^{D_1,D_1}[g_{2,D_1}]
- \sum_{\sigma}T_{\kappa_{p}}^{\sigma,D_1}[g_{3,\sigma}]
- g_{4,D_1}/2 + \sum_{\sigma}N_{\kappa_{s}}^{\sigma,D_1}[g_{4,\sigma}],\\
f_3 &= 2\bigl( -\mu_1\kappa_{a}^2\pmb{\nu}_{D_1}^\top S_{\kappa_{a}}^{D_1,D_1}[\pmb{\nu}_{D_1}\pmb{\nu}_{D_1}^\top g_{1,D_1}]\pmb{\nu}_{D_1}
+ \mu_1\pmb{\nu}_{D_1}^\top N_{\kappa_{a}}^{D_1,D_1}[\pmb{\tau}_{D_1}\partial_{\pmb{\tau}_{D_1}} g_{1,D_1} + g_{1,D_1}\partial_{\pmb{\tau}_{D_1}}\pmb{\tau}_{D_1}]\\[2pt]
&\quad - \mu_1\pmb{\nu}_{D_1}^\top T_{\kappa_{a}}^{D_1,D_1}[\pmb{\nu}_{D_1}\partial_{\pmb{\tau}_{D_1}} g_{1,D_1} + g_{1,D_1}\partial_{\pmb{\tau}_{D_1}}\pmb{\nu}_{D_1}]
+ \mu_1\kappa_{b}^2\pmb{\nu}_{D_1}^\top S_{\kappa_{b}}^{D_1,D_1}[\pmb{\tau}_{D_1}\pmb{\nu}_{D_1}^\top g_{2,D_1}]\pmb{\nu}_{D_1}\\[2pt]
&\quad + \mu_1\pmb{\nu}_{D_1}^\top N_{\kappa_{b}}^{D_1,D_1}[\pmb{\nu}_{D_1}\partial_{\pmb{\tau}_{D_1}} g_{2,D_1} + g_{2,D_1}\partial_{\pmb{\tau}_{D_1}}\pmb{\nu}_{D_1}]
+ \mu_1\pmb{\nu}_{D_1}^\top T_{\kappa_{b}}^{D_1,D_1}[\pmb{\tau}_{D_1}\partial_{\pmb{\tau}_{D_1}} g_{2,D_1} + g_{2,D_1}\partial_{\pmb{\tau}_{D_1}}\pmb{\tau}_{D_1}] \notag\\[2pt]
&\quad + \mu_2\kappa_{p}^2\pmb{\nu}_{D_1}^\top \sum_{\sigma}S_{\kappa_{p}}^{\sigma,D_1}[\pmb{\nu}_{D_1}\pmb{\nu}_{D_1}^\top g_{3,\sigma}]\pmb{\nu}_{D_1}
- \mu_2\pmb{\nu}_{D_1}^\top \sum_{\sigma}N_{\kappa_{p}}^{\sigma,D_1}[\pmb{\tau}_{D_1}\partial_{\pmb{\tau}_{D_1}} g_{3,\sigma} + g_{3,\sigma}\partial_{\pmb{\tau}_{D_1}}\pmb{\tau}_{D_1}]\\
&\quad + \mu_2\pmb{\nu}_{D_1}^\top \sum_{\sigma}T_{\kappa_{p}}^{\sigma,D_1}[\pmb{\nu}_{D_1}\partial_{\pmb{\tau}_{D_1}} g_{3,\sigma} + g_{3,\sigma}\partial_{\pmb{\tau}_{D_1}}\pmb{\nu}_{D_1}]
- \mu_2\kappa_{s}^2\pmb{\nu}_{D_1}^\top \sum_{\sigma}S_{\kappa_{s}}^{\sigma,D_1}[\pmb{\tau}_{D_1}\pmb{\nu}_{D_1}^\top g_{4,\sigma}]\pmb{\nu}_{D_1}\\
&\quad - \mu_2\pmb{\nu}_{D_1}^\top \sum_{\sigma}N_{\kappa_{s}}^{\sigma,D_1}[\pmb{\nu}_{D_1}\partial_{\pmb{\tau}_{D_1}} g_{4,\sigma} + g_{4,\sigma}\partial_{\pmb{\tau}_{D_1}}\pmb{\nu}_{D_1}]
- \mu_2\pmb{\nu}_{D_1}^\top \sum_{\sigma}T_{\kappa_{s}}^{\sigma,D_1}[\pmb{\tau}_{D_1}\partial_{\pmb{\tau}_{D_1}} g_{4,\sigma} + g_{4,\sigma}\partial_{\pmb{\tau}_{D_1}}\pmb{\tau}_{D_1}] \bigr) \notag\\
&\quad - \lambda_1\kappa_{a}^2 S_{\kappa_{a}}^{D_1,D_1}[g_{1,D_1}]
+ \lambda_2\kappa_{p}^2 \sum_{\sigma}S_{\kappa_{p}}^{\sigma,D_1}[g_{3,\sigma}]
+ \mu_1(\pmb{\nu}_{D_1}\cdot\partial_{\pmb{\tau}_{D_1}}\pmb{\tau}_{D_1}) g_{1,D_1}
+ \mu_1(\pmb{\nu}_{D_1}\cdot\partial_{\pmb{\tau}_{D_1}}\pmb{\nu}_{D_1}) g_{2,D_1}\notag\\
&\quad + \mu_1\partial_{\pmb{\tau}_{D_1}} g_{2,D_1} + \mu_2(\pmb{\nu}_{D_1}\cdot\partial_{\pmb{\tau}_{D_1}}\pmb{\tau}_{D_1}) g_{3,D_1}
+ \mu_2(\pmb{\nu}_{D_1}\cdot\partial_{\pmb{\tau}_{D_1}}\pmb{\nu}_{D_1}) g_{4,D_1}
+ \mu_2\partial_{\pmb{\tau}_{D_1}} g_{4,D_1},\\[2pt]
f_4 &= 2\bigl( -\mu_1\kappa_{a}^2\pmb{\tau}_{D_1}^\top S_{\kappa_{a}}^{D_1,D_1}[\pmb{\nu}_{D_1}\pmb{\nu}_{D_1}^\top g_{1,D_1}]\pmb{\nu}_{D_1}
+ \mu_1\pmb{\tau}_{D_1}^\top N_{\kappa_{a}}^{D_1,D_1}[\pmb{\tau}_{D_1}\partial_{\pmb{\tau}_{D_1}} g_{1,D_1} + g_{1,D_1}\partial_{\pmb{\tau}_{D_1}}\pmb{\tau}_{D_1}]\\[2pt]
&\quad- \mu_1\pmb{\tau}_{D_1}^\top T_{\kappa_{a}}^{D_1,D_1}[\pmb{\nu}_{D_1}\partial_{\pmb{\tau}_{D_1}} g_{1,D_1} + g_{1,D_1}\partial_{\pmb{\tau}_{D_1}}\pmb{\nu}_{D_1}] 
 + \mu_1\kappa_{b}^2\pmb{\tau}_{D_1}^\top S_{\kappa_{b}}^{D_1,D_1}[\pmb{\tau}_{D_1}\pmb{\nu}_{D_1}^\top g_{2,D_1}]\pmb{\nu}_{D_1}\\[2pt]
&\quad+ \mu_1\pmb{\tau}_{D_1}^\top N_{\kappa_{b}}^{D_1,D_1}[\pmb{\nu}_{D_1}\partial_{\pmb{\tau}_{D_1}} g_{2,D_1} + g_{2,D_1}\partial_{\pmb{\tau}_{D_1}}\pmb{\nu}_{D_1}]
+ \mu_1\pmb{\tau}_{D_1}^\top T_{\kappa_{b}}^{D_1,D_1}[\pmb{\tau}_{D_1}\partial_{\pmb{\tau}_{D_1}} g_{2,D_1} + g_{2,D_1}\partial_{\pmb{\tau}_{D_1}}\pmb{\tau}_{D_1}] \notag\\[2pt]
&\quad + \mu_2\kappa_{p}^2\pmb{\tau}_{D_1}^\top \sum_{\sigma}S_{\kappa_{p}}^{\sigma,D_1}[\pmb{\nu}_{D_1}\pmb{\nu}_{D_1}^\top g_{3,\sigma}]\pmb{\nu}_{D_1}
- \mu_2\pmb{\tau}_{D_1}^\top \sum_{\sigma}N_{\kappa_{p}}^{\sigma,D_1}[\pmb{\tau}_{D_1}\partial_{\pmb{\tau}_{D_1}} g_{3,\sigma} + g_{3,\sigma}\partial_{\pmb{\tau}_{D_1}}\pmb{\tau}_{D_1}]\\
&\quad + \mu_2\pmb{\tau}_{D_1}^\top \sum_{\sigma}T_{\kappa_{p}}^{\sigma,D_1}[\pmb{\nu}_{D_1}\partial_{\pmb{\tau}_{D_1}} g_{3,\sigma} + g_{3,\sigma}\partial_{\pmb{\tau}_{D_1}}\pmb{\nu}_{D_1}]
- \mu_2\kappa_{s}^2\pmb{\tau}_{D_1}^\top \sum_{\sigma}S_{\kappa_{s}}^{\sigma,D_1}[\pmb{\tau}_{D_1}\pmb{\nu}_{D_1}^\top g_{4,\sigma}]\pmb{\nu}_{D_1}\\
&\quad- \mu_2\pmb{\tau}_{D_1}^\top \sum_{\sigma}N_{\kappa_{s}}^{\sigma,D_1}[\pmb{\nu}_{D_1}\partial_{\pmb{\tau}_{D_1}} g_{4,\sigma} + g_{4,\sigma}\partial_{\pmb{\tau}_{D_1}}\pmb{\nu}_{D_1}]
- \mu_2\pmb{\tau}_{D_1}^\top \sum_{\sigma}T_{\kappa_{s}}^{\sigma,D_1}[\pmb{\tau}_{D_1}\partial_{\pmb{\tau}_{D_1}} g_{4,\sigma} + g_{4,\sigma}\partial_{\pmb{\tau}_{D_1}}\pmb{\tau}_{D_1}] \bigr) \notag\\
&\quad - \mu_1\kappa_{b}^2 S_{\kappa_{b}}^{D_1,D_1}[g_{2,D_1}]
+ \mu_2\kappa_{s}^2 \sum_{\sigma}S_{\kappa_{s}}^{\sigma,D_1}[g_{4,\sigma}]
+ \mu_1(\pmb{\tau}_{D_1}\cdot\partial_{\pmb{\tau}_{D_1}}\pmb{\tau}_{D_1}) g_{1,D_1}
+ \mu_1\partial_{\pmb{\tau}_{D_1}} g_{1,D_1}
\notag\\
&\quad +
\mu_1(\pmb{\tau}_{D_1}\cdot\partial_{\pmb{\tau}_{D_1}}\pmb{\nu}_{D_1}) g_{2,D_1}  + \mu_2(\pmb{\tau}_{D_1}\cdot\partial_{\pmb{\tau}_{D_1}}\pmb{\tau}_{D_1}) g_{3,D_1}
+ \mu_2\partial_{\pmb{\tau}_{D_1}} g_{3,D_1}
+ \mu_2(\pmb{\tau}_{D_1}\cdot\partial_{\pmb{\tau}_{D_1}}\pmb{\nu}_{D_1}) g_{4,D_1},\\[2pt]
f_5 &= -g_{3,D_2}/2 + \sum_{\sigma}N_{\kappa_{p}}^{\sigma,D_2}[g_{3,\sigma}]
+ \sum_{\sigma}T_{\kappa_{s}}^{\sigma,D_2}[g_{4,\sigma}],\\
f_6 &= \sum_{\sigma}T_{\kappa_{p}}^{\sigma,D_2}[g_{3,\sigma}]
+ \sum_{\sigma}N_{\kappa_{s}}^{\sigma,D_2}[g_{4,\sigma}] + g_{4,D_2}/2,
\end{align*}
where $f_1$, $f_2$, $f_3$, $f_4$ are defined on $\partial D_1$, while $f_5$, $f_6$ are defined on $\partial D_2$.

\section{Full Discretization}\label{appendix b}
The Appendix presents the fully discretized form of equation \eqref{par1}-\eqref{par4}, obtained by applying the Nystr\"{o}m method in line with the approaches in \cite{Kress} and \cite{Drule}. The parametrized kernels \(\widetilde{S}\) and \(\widetilde{N}\) for the single-layer and normal-derivative operators admit the decompositions
\begin{align*}
	\widetilde {S}(\xi,\varsigma;\kappa)
	&=\widetilde {S}_{1}(\xi,\varsigma;\kappa)\ln\bigl(4\sin^{2}\frac{\xi-\varsigma}{2}\bigr) 
	+\widetilde {S}_{2}(\xi,\varsigma;\kappa),\\
	\widetilde N(\xi,\varsigma;\kappa)
	&=\widetilde N_{1}(\xi,\varsigma;\kappa)\ln\bigl(4\sin^{2}\frac{\xi-\varsigma}{2}\bigr)
	+ \widetilde N_{2}(\xi,\varsigma;\kappa),
\end{align*}
where
\begin{align*}
	\widetilde {S}_{1}(\xi,\varsigma;\kappa)
	&=-\frac{1}{4\pi}J_{0}\bigl(\kappa\lvert \pmb p(\xi)-\pmb p(\varsigma)\rvert\bigr),\\
	\widetilde {S}_{2}(\xi,\varsigma;\kappa)
	&=\widetilde {S}(\xi,\varsigma;\kappa)
	-\widetilde {S}_{1}(\xi,\varsigma;\kappa)\ln\bigl(4\sin^{2}\frac{\xi-\varsigma}{2}\bigr),\\
	\widetilde N_{1}(\xi,\varsigma;\kappa)
	&=\frac{\kappa}{4\pi}
	\pmb{n}(\xi)  \cdot  \bigl[\pmb p(\xi)-\pmb p(\varsigma)\bigr]
	\frac{J_{1}\bigl(\kappa\lvert \pmb p(\xi)-\pmb p(\varsigma)\rvert\bigr)}
	{\lvert \pmb p(\xi)-\pmb p(\varsigma)\rvert},\\
	\widetilde N_{2}(\xi,\varsigma;\kappa)
	&=\widetilde N(\xi,\varsigma;\kappa)
	-\widetilde N_{1}(\xi,\varsigma;\kappa)\ln\bigl(4\sin^{2}\frac{\xi-\varsigma}{2}\bigr),
\end{align*}
with $J_0$ and $J_1$ denoting the Bessel functions of the first kind with order zero and one, respectively. Moreover, the diagonal terms are
\begin{align*}
	\widetilde{S}_1(\xi,\xi;\kappa)&=-\frac{1}{4\pi}, &&\widetilde{S}_2(\xi,\xi;\kappa)=\frac{\mathrm{i}}{4}-\frac{E_c}{2\pi}-\frac{1}{2\pi}\ln(\frac{\kappa}{2}\left\vert \pmb p'(\xi)\right\vert),\\
	\widetilde{N}_1(\xi,\xi;\kappa)&=0, &&\widetilde{N}_2(\xi,\xi;\kappa)=\frac{1}{4\pi}\frac{\pmb{n}(\xi) \cdot \pmb p''(\xi)}{{\left\vert \pmb p'(\xi)\right\vert}^2},
\end{align*}
where \(E_{c}=0.57721\ldots\) is the Euler constant.

The kernel \(\widetilde T\) for the tangential-derivative operator admits the decomposition
\[
\widetilde T(\xi,\varsigma;\kappa)
= \widetilde T_1(\xi,\varsigma)\cot\frac{\varsigma-\xi}{2}
+ \widetilde T_2(\xi,\varsigma;\kappa)\ln  \Bigl(4\sin^2\frac{\xi-\varsigma}{2}\Bigr)
+ \widetilde T_3(\xi,\varsigma;\kappa),
\]
where
\[
\begin{aligned}
	\widetilde T_1(\xi,\varsigma)
	&=\frac{1}{2\pi}\pmb{n}(\xi)^{\perp}  \cdot  \bigl[\pmb p(\varsigma)-\pmb p(\xi)\bigr]
	\frac{\tan\tfrac{\varsigma-\xi}{2}}{\lvert \pmb p(\xi)-\pmb p(\varsigma)\rvert^2},\\
	\widetilde T_2(\xi,\varsigma;\kappa)
	&=\frac{\kappa}{4\pi}\pmb{n}(\xi)^{\perp}  \cdot  \bigl[\pmb p(\xi)-\pmb p(\varsigma)\bigr]
	\frac{J_{1}\bigl(\kappa\lvert \pmb p(\xi)-\pmb p(\varsigma)\rvert\bigr)}
	{\lvert \pmb p(\xi)-\pmb p(\varsigma)\rvert},\\
	\widetilde T_3(\xi,\varsigma;\kappa)
	&=\widetilde T(\xi,\varsigma;\kappa)
	- \widetilde T_1(\xi,\varsigma;\kappa)\cot\tfrac{\varsigma-\xi}{2}
	- \widetilde T_2(\xi,\varsigma;\kappa)\ln  \Bigl(4\sin^2\tfrac{\xi-\varsigma}{2}\Bigr),
\end{aligned}
\]
and the diagonal values are
\[
\widetilde T_1(\xi,\xi) = \frac{1}{4\pi},\quad
\widetilde T_2(\xi,\xi;\kappa) = 0,\quad
\widetilde T_3(\xi,\xi;\kappa) = 0.
\]

A set of equidistant quadrature nodes is defined as $\varsigma_{j}^{(n)}:=\pi j/n, j=0,\ldots,2n-1$. 
For the smooth integrals,the trapezoidal rule gives
\begin{align*}
	\int_0^{2\pi}K(\xi,\varsigma)f(\varsigma)\mathrm{d}\varsigma\approx\frac{\pi}{n}\sum_{j=0}^{2n-1}K(\xi,\varsigma_j^{(n)})f(\varsigma_j^{(n)}).
\end{align*}
To compute the singular integrals, we refer to the methods proposed in \cite{Kress} and \cite{Drule}, employing the following quadrature rules 
\begin{align*}
	\frac{1}{2\pi}\int_0^{2\pi}\cot\tfrac{\varsigma-\xi}{2}K(\xi,\varsigma)f(\varsigma)\mathrm{d}\varsigma&\approx\sum_{j=0}^{2n-1}U_j^{(n)}(\xi)K(\xi,\varsigma_j^{(n)})f(\varsigma_j^{(n)}),\\
	\int_0^{2\pi}\ln\left(4\sin^2\tfrac{\xi-\varsigma}{2}\right)K(\xi,\varsigma)f(\varsigma)\mathrm{d}\varsigma&\approx\sum_{j=0}^{2n-1}R_j^{(n)}(\xi)K(\xi,\varsigma_j^{(n)})f(\varsigma_j^{(n)}),
\end{align*}
where the function $K$ is continuous. The quadrature weights are given by
\begin{align*}
	U_j^{(n)}(\xi)&=\frac{1}{2n}\big[1-\cos n(\varsigma_j^{(n)}-\xi)\big]\cot\tfrac{\varsigma_j^{(n)}-\xi}{2},\\
	R_j^{(n)}(\xi)&=-\frac{2\pi}{n}\sum_{m=1}^{n-1}\frac{1}{m}\cos\left[m(\xi-\varsigma_j^{(n)})\right]-\frac{\pi}{n^2}\cos\left[n(\xi-\varsigma_j^{(n)})\right],
\end{align*}
and for integrals involving derivative terms, with the help of Lagrange bases
\begin{align*}
	\mathcal{L}_m(\varsigma)=\frac{1}{2n}\big\{1+2\sum_{k=1}^{n-1}\cos k(\varsigma-\varsigma_m^{(n)})+\cos n(\varsigma-\varsigma_m^{(n)})\big\}
\end{align*}
we obtain
\begin{align*}
	\int_{0}^{2\pi}K(\xi,\varsigma)f^{\prime}(\varsigma)\mathrm{d}\varsigma&\approx\frac{\pi}{n}\sum_{j=0}^{2n-1}\sum_{m=0}^{2n-1}d_{m-j}^{(n)}K(\xi,\varsigma_{m}^{(n)})f(\varsigma_{j}^{(n)}),\\
	\frac{1}{2\pi}\int_0^{2\pi}\cot\tfrac{\varsigma-\xi}{2}K(\xi,\varsigma)f^{\prime}(\varsigma)\mathrm{d}\varsigma&\approx\sum_{j=0}^{2n-1}\sum_{m=0}^{2n-1}d_{m-j}^{(n)}U_m^{(n)}(\xi)K(\xi,\varsigma_m^{(n)})f(\varsigma_j^{(n)}),\\
	\int_{0}^{2\pi}\ln\left(4\sin^{2}\tfrac{\xi-\varsigma}{2}\right)K(\xi,\varsigma)f^{\prime}(\varsigma)\mathrm{d}\varsigma&\approx\sum_{j=0}^{2n-1}\sum_{m=0}^{2n-1}d_{m-j}^{(n)}R_{m}^{(n)}(\xi)K(\xi,\varsigma_{m}^{(n)})f(\varsigma_{j}^{(n)}),
\end{align*}
where $d_{m-j}^{(n)}=\mathcal{L}^{\prime}_m(\varsigma_m^{(n)})$, the quadrature weight are given by
\begin{align*}
	d_{j}^{(n)}&=\begin{cases}\frac{(-1)^{j}}{2}\cot\frac{{j}\pi}{2n}&\quad j=\pm1,\cdots,\pm2n-1,\\
		0&\quad j=0.\end{cases}
\end{align*}
Following the approach in \cite{Djump}, we derive the Nystr\"{o}m discretization through the following combination
\begin{align*}
	\tilde{\varphi}_i^{(n)}(\varsigma)&=\sum_{j=0}^{2n-1}\varPhi_j^{(i)}\mathcal{L}_j(\varsigma),\\
	\tilde{\varphi}_i^{{\prime}(n)}(\varsigma)&=\sum_{j=0}^{2n-1}\varPhi_j^{(i)}\mathcal{L}_j^{\prime}(\varsigma),
\end{align*}
where $\tilde{\varphi}_i^{(n)}$ denotes a finite-dimensional representation of the densities $\tilde{\varphi}_i$, and $\varPhi_j^{(i)}=\tilde{\varphi}_i^{(n)}(\varsigma_j),$ $i=1,\ldots,4$. 

Therefore, the fully discretized scheme of \eqref{par1}-\eqref{par4} is
\begin{align*}
	w_{1,i}^{(n)}=&\sum_{j=0}^{2n-1}X_{i,j;a}^{(n)}J_{\pmb{p}}^2(\varsigma_j^{(n)})\tilde{\varrho}_{1,j}^{(n)}+\sum_{j=0}^{2n-1}Y_{i,j;b}^{(n)}J_{\pmb{p}}^2(\varsigma_j^{(n)})\tilde{\varrho}_{2,j}^{(n)}-\sum_{j=0}^{2n-1}X_{i,j;p}^{(n)}J_{\pmb{p}}^2(\varsigma_j^{(n)})\tilde{\varrho}_{3,j}^{(n)}\\
	&\quad -\sum_{j=0}^{2n-1}Y_{i,j;s}^{(n)}J_{\pmb{p}}^2(\varsigma_i^{(n)})\tilde{\varrho}_{4,j}^{(n)}+J_{\pmb{p}}(\varsigma_i^{(n)})/2\tilde{\varrho}_{1,i}^{(n)}+J_{\pmb{p}}(\varsigma_i^{(n)})/2\tilde{\varrho}_{3,i}^{(n)},\\
	w_{2,i}^{(n)}=&\sum_{j=0}^{2n-1}Y_{i,j;a}^{(n)}J_{\pmb{p}}^2(\varsigma_j^{(n)})\tilde{\varrho}_{1,j}^{(n)}-\sum_{j=0}^{2n-1}X_{i,j;b}^{(n)}J_{\pmb{p}}^2(\varsigma_j^{(n)})\tilde{\varrho}_{2,j}^{(n)}-\sum_{j=0}^{2n-1}Y_{i,j;p}^{(n)}J_{\pmb{p}}^2(\varsigma_j^{(n)})\tilde{\varrho}_{3,j}^{(n)}\\
	&\quad +\sum_{j=0}^{2n-1}X_{i,j;s}^{(n)}J_{\pmb{p}}^2(\varsigma_j^{(n)})\tilde{\varrho}_{4,j}^{(n)}-J_{\pmb{p}}(\varsigma_i^{(n)})/2\tilde{\varrho}_{2,i}^{(n)}-J_{\pmb{p}}(\varsigma_i^{(n)})/2\tilde{\varrho}_{4,i}^{(n)},\\
	w_{3,i}^{(n)}=&\sum_{j=0}^{2n-1}H_{i,j;a}^{(n)}\tilde{\varrho}_{1,j}^{(n)}+\sum_{j=0}^{2n-1}H_{i,j;b}^{(n)}\tilde{\varrho}_{2,j}^{(n)}+\sum_{j=0}^{2n-1}H_{i,j;p}^{(n)}\tilde{\varrho}_{3,j}^{(n)}+\sum_{j=0}^{2n-1}H_{i,j;s}^{(n)}\tilde{\varrho}_{4,j}^{(n)}\\
	&\quad +\mu_1m_3(\varsigma_{i}^{(n)},\varsigma_{i}^{(n)})/J_{\pmb{p}}(\varsigma_i^{(n)})\tilde{\varrho}_{1,i}^{(n)}+\mu_1m_4(\varsigma_{i}^{(n)},\varsigma_{i}^{(n)})/J_{\pmb{p}}(\varsigma_i^{(n)})\tilde{\varrho}_{2,i}^{(n)}\\
	&+\mu_1\sum_{m=0}^{2n-1}d_{m-i}^{(n)}\tilde{\varrho}_{2,m}^{(n)}+\mu_2m_3(\varsigma_{i}^{(n)},\varsigma_{i}^{(n)})/J_{\pmb{p}}(\varsigma_i^{(n)})\tilde{\varrho}_{3,i}^{(n)}\\
	&\quad +\mu_2m_4(\varsigma_{i}^{(n)},\varsigma_{i}^{(n)})/J_{\pmb{p}}(\varsigma_i^{(n)})\tilde{\varrho}_{4,i}^{(n)}+\mu_2\sum_{m=0}^{2n-1}d_{m-i}^{(n)}\tilde{\varrho}_{4,m}^{(n)},\\
	w_{4,i}^{(n)}=&\sum_{j=0}^{2n-1}M_{i,j;a}^{(n)}\tilde{\varrho}_{1,j}^{(n)}+\sum_{j=0}^{2n-1}M_{i,j;b}^{(n)}\tilde{\varrho}_{2,j}^{(n)}+\sum_{j=0}^{2n-1}M_{i,j;p}^{(n)}\tilde{\varrho}_{3,j}^{(n)}+\sum_{j=0}^{2n-1}M_{i,j;s}^{(n)}\tilde{\varrho}_{4,j}^{(n)}\\
	&\quad +\mu_1m_7(\varsigma_{i}^{(n)},\varsigma_{i}^{(n)})/J_{\pmb{p}}(\varsigma_i^{(n)})\tilde{\varrho}_{1,i}^{(n)}+\mu_1\sum_{m=0}^{2n-1}d_{m-i}^{(n)}\tilde{\varrho}_{1,m}^{(n)}\\
	&+\mu_1m_8(\varsigma_{i}^{(n)},\varsigma_{i}^{(n)})/J_{\pmb{p}}(\varsigma_i^{(n)})\tilde{\varrho}_{2,i}^{(n)}+\mu_2m_7(\varsigma_{i}^{(n)},\varsigma_{i}^{(n)})/J_{\pmb{p}}(\varsigma_i^{(n)})\tilde{\varrho}_{3,i}^{(n)}\\
	&\quad +\mu_2\sum_{m=0}^{2n-1}d_{m-i}^{(n)}\tilde{\varrho}_{3,m}^{(n)}+\mu_2m_8(\varsigma_{i}^{(n)},\varsigma_{i}^{(n)})/J_{\pmb{p}}(\varsigma_i^{(n)})\tilde{\varrho}_{4,i}^{(n)},
\end{align*}
where
\begin{align*}
	&m_1(\varsigma_{i}^{(n)},\varsigma_{j}^{(n)})=\tilde{\pmb{\nu}}^{\top}(\varsigma_{i}^{(n)})\pmb{n}(\varsigma_{j}^{(n)}),\quad &&m_2(\varsigma_{i}^{(n)},\varsigma_{j}^{(n)})=\tilde{\pmb{\nu}}^{\top}(\varsigma_{i}^{(n)})\pmb{n}^{\perp}(\varsigma_{j}^{(n)}),\\
	&m_3(\varsigma_{i}^{(n)},\varsigma_{j}^{(n)})=\tilde{\pmb{\nu}}^{\top}(\varsigma_{i}^{(n)})\pmb{n}^{\perp '}(\varsigma_{j}^{(n)}),\quad &&m_4(\varsigma_{i}^{(n)},\varsigma_{j}^{(n)})=\tilde{\pmb{\nu}}^{\top}(\varsigma_{i}^{(n)})\pmb{n}'(\varsigma_{j}^{(n)}),\\
	&m_5(\varsigma_{i}^{(n)},\varsigma_{j}^{(n)})=\tilde{\pmb{\tau}}^{\top}(\varsigma_{i}^{(n)})\pmb{n}(\varsigma_{j}^{(n)}),\quad &&m_6(\varsigma_{i}^{(n)},\varsigma_{j}^{(n)})=\tilde{\pmb{\tau}}^{\top}(\varsigma_{i}^{(n)})\pmb{n}^{\perp}(\varsigma_{j}^{(n)}),\\
	&m_7(\varsigma_{i}^{(n)},\varsigma_{j}^{(n)})=\tilde{\pmb{\tau}}^{\top}(\varsigma_{i}^{(n)})\pmb{n}^{\perp '}(\varsigma_{j}^{(n)}),\quad &&m_8(\varsigma_{i}^{(n)},\varsigma_{j}^{(n)})=\tilde{\pmb{\tau}}^{\top}(\varsigma_{i}^{(n)})\pmb{n}'(\varsigma_{j}^{(n)}),
\end{align*}
and
\begin{align*}
	X_{i,j;l}^{(n)}=& \big(R_{j}^{(n)}(\varsigma_{i}^{(n)})\widetilde{N}_1(\varsigma_{i}^{(n)},\varsigma_{j}^{(n)};\kappa_l)+\frac{\pi}{n}\widetilde{N}_2(\varsigma_{i}^{(n)},\varsigma_j^{(n)};\kappa_l)\big)/J_{\pmb{p}}(\varsigma_{i}^{(n)}),\\
	Y_{i,j;l}^{(n)}=& \big(2\pi U_j^{(n)}(\varsigma_{i}^{(n)})\widetilde{T}_1(\varsigma_{i}^{(n)},\varsigma_j^{(n)})+R_{j}^{(n)}\widetilde{T}_2(\varsigma_{i}^{(n)},\varsigma_j^{(n)};\kappa_l)+\frac{\pi}{n}\widetilde{T}_3(\varsigma_{i}^{(n)},\varsigma_j^{(n)};\kappa_l)\big)/J_{\pmb{p}}(\varsigma_{i}^{(n)}),\\
	Z_{i,j;l}^{(n)}=&
	R_{j}^{(n)}(\varsigma_{i}^{(n)})\widetilde{S}_1(\varsigma_{i}^{(n)},\varsigma_{j}^{(n)};\kappa_l)+\frac{\pi}{n}\widetilde{S}_2(\varsigma_{i}^{(n)},\varsigma_j^{(n)};\kappa_l),\\
	H_{i,j;a}^{(n)}=&-2\mu_1\Big[ \kappa_{a}^2Z_{i,j;a}^{(n)}m_1^2(\varsigma_{i}^{(n)},\varsigma_{j}^{(n)})-X_{i,j;a}^{(n)}m_3(\varsigma_{i}^{(n)},\varsigma_{j}^{(n)})+Y_{i,j;a}^{(n)}m_4(\varsigma_{i}^{(n)},\varsigma_{j}^{(n)})\\
	&\quad -\sum_{m=0}^{2n-1}d_{m-j}^{(n)} \Big(X_{i,m;a}^{(n)}m_2(\varsigma_{i}^{(n)},\varsigma_{m}^{(n)})-Y_{i,m;a}^{(n)}m_1(\varsigma_{i}^{(n)},\varsigma_{m}^{(n)})\Big)\Big]\\
	&-\lambda_1\kappa_{a}^2Z_{i,j;a}^{(n)}J_{\pmb{p}}^2(\varsigma_j^{(n)}),\\
	H_{i,j;b}^{(n)}=&2\mu_1\Big[\kappa_{b}^2Z_{i,j;b}^{(n)}m_2(\varsigma_{i}^{(n)},\varsigma_{j}^{(n)})m_1(\varsigma_{i}^{(n)},\varsigma_{j}^{(n)})+X_{i,j;b}^{(n)}m_4(\varsigma_{i}^{(n)},\varsigma_{j}^{(n)})\\
	&\quad +\sum_{m=0}^{2n-1}d_{m-j}^{(n)}\Big(X_{i,m;b}^{(n)}m_1(\varsigma_{i}^{(n)},\varsigma_{m}^{(n)})+Y_{i,m;b}^{(n)}m_2(\varsigma_{i}^{(n)},\varsigma_{m}^{(n)})\Big)\\
	&+Y_{i,j;b}^{(n)}m_3(\varsigma_{i}^{(n)},\varsigma_{j}^{(n)})\Big],\\
	H_{i,j;p}^{(n)}=&2\mu_2\Big[\kappa_{p}^2Z_{i,j;p}^{(n)}m_1^2(\varsigma_{i}^{(n)},\varsigma_{j}^{(n)})-X_{i,j;p}^{(n)}m_3(\varsigma_{i}^{(n)},\varsigma_{j}^{(n)})+Y_{i,j;p}^{(n)}m_4(\varsigma_{i}^{(n)},\varsigma_{j}^{(n)})\\
	&\quad -\sum_{m=0}^{2n-1}d_{m-j}^{(n)} \Big(X_{i,m;p}^{(n)}m_2(\varsigma_{i}^{(n)},\varsigma_{m}^{(n)})-Y_{i,m;p}^{(n)}m_1(\varsigma_{i}^{(n)},\varsigma_{m}^{(n)})\Big)\Big]\\
	&+\lambda_2\kappa_{p}^2Z_{i,j;p}^{(n)}J_{\pmb{p}}^2(\varsigma_j^{(n)}),\\
	H_{i,j;s}^{(n)}=&-2\mu_2\Big[\kappa_{s}^2Z_{i,j;s}^{(n)}m_2(\varsigma_{i}^{(n)},\varsigma_{j}^{(n)})m_1(\varsigma_{i}^{(n)},\varsigma_{j}^{(n)})+X_{i,j;s}^{(n)}m_4(\varsigma_{i}^{(n)},\varsigma_{j}^{(n)})\\
	&\quad +\sum_{m=0}^{2n-1}d_{m-j}^{(n)}\Big(X_{i,m;s}^{(n)}m_1(\varsigma_{i}^{(n)},\varsigma_{m}^{(n)})+Y_{i,m;s}^{(n)}m_2(\varsigma_{i}^{(n)},\varsigma_{m}^{(n)})\Big)\\
	&+Y_{i,j;s}^{(n)}m_3(\varsigma_{i}^{(n)},\varsigma_{j}^{(n)})\Big],
\end{align*}
\begin{align*}
	M_{i,j;a}^{(n)}=&-2\mu_1\Big[ \kappa_{a}^2Z_{i,j;a}^{(n)}m_5(\varsigma_{i}^{(n)},\varsigma_{j}^{(n)})m_1(\varsigma_{i}^{(n)},\varsigma_{j}^{(n)})-X_{i,j;a}^{(n)}m_7(\varsigma_{i}^{(n)},\varsigma_{j}^{(n)})\\
	&\quad -\sum_{m=0}^{2n-1}d_{m-j}^{(n)} \Big(X_{i,m;a}^{(n)}m_6(\varsigma_{i}^{(n)},\varsigma_{m}^{(n)})-Y_{i,m;a}^{(n)}m_5(\varsigma_{i}^{(n)},\varsigma_{m}^{(n)})\Big)\\
	&+Y_{i,j;a}^{(n)}m_8(\varsigma_{i}^{(n)},\varsigma_{j}^{(n)})\Big],\\
	M_{i,j;b}^{(n)}=&2\mu_1\Big[\kappa_{b}^2Z_{i,j;b}^{(n)}m_6(\varsigma_{i}^{(n)},\varsigma_{j}^{(n)})m_1(\varsigma_{i}^{(n)},\varsigma_{j}^{(n)})+X_{i,j;b}^{(n)}m_8(\varsigma_{i}^{(n)},\varsigma_{j}^{(n)})\\
	&\quad +\sum_{m=0}^{2n-1}d_{m-j}^{(n)}\Big(X_{i,m;b}^{(n)}m_5(\varsigma_{i}^{(n)},\varsigma_{m}^{(n)})+Y_{i,m;b}^{(n)}m_6(\varsigma_{i}^{(n)},\varsigma_{m}^{(n)})\Big)\\
	&+Y_{i,j;b}^{(n)}m_7(\varsigma_{i}^{(n)},\varsigma_{j}^{(n)})\Big]-\mu_1\kappa_{b}^2Z_{i,j;b}^{(n)}J_{\pmb{p}}^2(\varsigma_j^{(n)}),\\
	M_{i,j;p}^{(n)}=&2\mu_2\Big[\kappa_{p}^2Z_{i,j;p}^{(n)}m_5(\varsigma_{i}^{(n)},\varsigma_{j}^{(n)})m_1(\varsigma_{i}^{(n)},\varsigma_{j}^{(n)})-X_{i,j;p}^{(n)}m_7(\varsigma_{i}^{(n)},\varsigma_{j}^{(n)})\\
	&\quad -\sum_{m=0}^{2n-1}d_{m-j}^{(n)} \Big(X_{i,m;p}^{(n)}m_6(\varsigma_{i}^{(n)},\varsigma_{m}^{(n)})-Y_{i,m;p}^{(n)}m_5(\varsigma_{i}^{(n)},\varsigma_{m}^{(n)})\Big)\\
	&+Y_{i,j;p}^{(n)}m_8(\varsigma_{i}^{(n)},\varsigma_{j}^{(n)})\Big],\\
	M_{i,j;s}^{(n)}=&-2\mu_2\Big[\kappa_{s}^2Z_{i,j;s}^{(n)}m_6(\varsigma_{i}^{(n)},\varsigma_{j}^{(n)})m_1(\varsigma_{i}^{(n)},\varsigma_{j}^{(n)})+X_{i,j;s}^{(n)}m_8(\varsigma_{i}^{(n)},\varsigma_{j}^{(n)})\\
	&\quad +\sum_{m=0}^{2n-1}d_{m-j}^{(n)}\Big(X_{i,m;s}^{(n)}m_5(\varsigma_{i}^{(n)},\varsigma_{m}^{(n)})+Y_{i,m;s}^{(n)}m_6(\varsigma_{i}^{(n)},\varsigma_{m}^{(n)})\Big)\\
	&+Y_{i,j;s}^{(n)}m_7(\varsigma_{i}^{(n)},\varsigma_{j}^{(n)})\Big]+\mu_2\kappa_{s}^2Z_{i,j;s}^{(n)}J_{\pmb{p}}^2(\varsigma_j^{(n)}).
\end{align*}

\end{document}